\patchcmd{\@makechapterhead}{50\p@}{\chapheadtopskip}{}{}
\patchcmd{\@makeschapterhead}{50\p@}{\chapheadtopskip}{}{}
\newlength{\chapheadtopskip}\setlength{\chapheadtopskip}{-2pt}
\DeclareMathAlphabet{\mathpzc}{OT1}{pzc}{m}{it}
\newcommand{\mylabel}[2]{#2\def\@currentlabel{#2}\label{#1}}
\newtheorem{theorem}{Theorem}[section]
\newtheorem{lemma}[theorem]{Lemma}
\newtheorem{obs}[theorem]{Observation}
\newtheorem{defn}[theorem]{Definition}
\newtheorem{prop}[theorem]{Proposition}
\newtheorem{claim}[theorem]{Claim}
\newtheorem{cor}[theorem]{Corollary}
\newtheorem{subclaim}{Subclaim}[theorem]
\newenvironment{claimproof}[1]{\par\noindent\underline{Proof:}\space#1}{\hspace{1mm}$\blacksquare$}
\newlength\FHoffset
\newlength\FHright
 \newtheoremstyle{TheoremNum}
        {\topsep}{\topsep}              
        {\itshape}                      
        {}                              
        {\bfseries}                     
        {.}                             
        { }                             
        {\thmname{#1}\thmnote{ \bfseries #3}}
    \theoremstyle{TheoremNum}
    \newtheorem{thmn}{Theorem}
\newtheoremstyle{PropNum}
        {\topsep}{\topsep}              
        {\itshape}                      
        {}                              
        {\bfseries}                     
        {.}                             
        { }                             
        {\thmname{#1}\thmnote{ \bfseries #3}}
    \theoremstyle{PropNum}
\newtheoremstyle{LemmaNum}
        {\topsep}{\topsep}              
        {\itshape}                      
        {}                              
        {\bfseries}                     
        {.}                             
        { }                             
        {\thmname{#1}\thmnote{ \bfseries #3}}
    \theoremstyle{LemmaNum}
\renewcommand\subitem{\@idxitem\nobreak\hspace*{20\p@}}
\renewcommand\subsubitem{\@idxitem\nobreak\hspace*{20\p@}}
\title{Distant Precolored Components Part III: The General Case}
\author{Joshua Nevin}
\date{}
\begin{document}
\maketitle

\begin{center}\textbf{Abstract}\end{center} This is the third in a sequence of three papers in which we prove the following generalization of Thomassen's 5-choosability theorem: Let $G$ be a finite graph embedded on a surface of genus $g$. Then $G$ can be $L$-colored, where $L$ is a list-assignment for $G$ in which every vertex has a 5-list except for a collection of pairwise far-apart components, each precolored with an ordinary 2-coloring, as long as the face-width of $G$ is at least $2^{\Omega(g)}$ and the precolored components are of distance at least $2^{\Omega(g)}$ apart. This provides an affirmative answer to a generalized version of a conjecture of Thomassen and also generalizes a result from 2017 of Dvo\v{r}\'ak, Lidick\'y, Mohar, and Postle about distant precolored vertices.  In a previous paper, we proved that the above result holds for a restricted class of embeddings which have no separating cycles of length three or four. In this paper, we use this special case to prove that the result holds in the general case. 

\section{Background and Motivation}

All graphs in this paper have a finite number of vertices. Given a graph $G$, a \emph{list-assignment} for $G$ is a family of sets $\{L(v): v\in V(G)\}$, where each $L(v)$ is a finite subset of $\mathbb{N}$. The elements of $L(v)$ are called \emph{colors}. A function $\phi:V(G)\rightarrow\bigcup_{v\in V(G)}L(v)$ is called an \emph{$L$-coloring of} $G$ if $\phi(v)\in L(v)$ for each $v\in V(G)$, and $\phi(x)\neq\phi(y)$ for any adjacent vertices $x,y$. Given an $S\subseteq V(G)$ and a function $\phi: S\rightarrow\bigcup_{v\in S}L(v)$, we call $\phi$ an \emph{ $L$-coloring of $S$} if $\phi$ is an $L$-coloring of the induced graph $G[S]$. A \emph{partial} $L$-coloring of $G$ is an $L$-coloring of an induced subgraph of $G$. Likewise, given an $S\subseteq V(G)$, a \emph{partial $L$-coloring} of $S$ is a function $\phi:S'\rightarrow\bigcup_{v\in S'}L(v)$, where $S'\subseteq S$ and $\phi$ is an $L$-coloring of $S'$. Given an integer $k\geq 1$, $G$ is called \emph{$k$-choosable} if it is $L$-colorable for every list-assignment $L$ for $G$ such that $|L(v)|\geq k$ for all $v\in V(G)$. Thomassen demonstrated in \cite{AllPlanar5ThomPap} that all planar graphs are 5-choosable. Actually, Thomassen proved something stronger. 

\begin{theorem}\label{thomassen5ChooseThm}
Let $G$ be a planar graph with facial cycle $C$. Let $xy\in E(C)$ and $L$ be a list assignment for $V(G)$ such that each vertex of $G\setminus C$ has a list of size at least five and each vertex of $C\setminus\{x,y\}$ has a list of size at least three, where $xy$ is $L$-colorable. Then $G$ is $L$-colorable.
\end{theorem}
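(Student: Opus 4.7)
The plan is to proceed by induction on $|V(G)|$, with a nearly trivial base case (for instance, $G$ consisting of just the edge $xy$). Before applying induction, I would reduce to the case that $G$ is a near-triangulation -- every face other than $C$ is a triangle -- since adding chords inside inner faces only tightens the coloring constraints without altering the lists, so an $L$-coloring of the augmented graph immediately yields one for $G$.

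The inductive step then splits into two cases depending on whether the outer cycle $C$ has a chord. If $uv$ is a chord of $C$, then $uv$ separates $G$ into two subgraphs $G_1$ and $G_2$ sharing only the edge $uv$, with $G_1$ containing the precolored edge $xy$. First apply the theorem inductively to $G_1$ to produce an $L$-coloring which, in particular, assigns colors to $u$ and $v$; then apply the theorem inductively to $G_2$, treating $uv$ as its precolored edge (its lists satisfy the hypotheses since the outer boundary of $G_2$ is contained in $C \cup \{uv\}$). Gluing the two colorings yields an $L$-coloring of $G$.

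If $C$ is chordless, let $v$ be the neighbor of $x$ along $C$ distinct from $y$, and let $v_1, \ldots, v_p$ be the neighbors of $v$ in $G - C$ in the cyclic order around $v$. By the near-triangulation assumption and chordlessness of $C$, the sequence $x, v_1, v_2, \ldots, v_p, v'$ is a path in $G$ (where $v'$ is $v$'s other neighbor on $C$), and this path replaces $v$ on the outer boundary of $G - v$, which is therefore a cycle still containing $xy$. Since $|L(v)| \geq 3$, choose two colors $\alpha, \beta \in L(v) \setminus \{\phi(x)\}$; define $L'$ by removing $\alpha, \beta$ from each $L(v_i)$ (so that $|L'(v_i)| \geq 3$) and leaving all other lists unchanged. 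By induction, $G - v$ has an $L'$-coloring. In this coloring each $v_i$ avoids $\{\alpha,\beta\}$, and $\phi(x) \notin \{\alpha,\beta\}$ by construction, so the only neighbor of $v$ whose color could lie in $\{\alpha,\beta\}$ is $v'$; it excludes at most one of $\alpha, \beta$, leaving the other available to extend the coloring to $v$.

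The main obstacle is really a bookkeeping issue: one must verify in the chordless case that the new outer boundary of $G - v$ is a genuine cycle satisfying the theorem's hypotheses (so that induction applies), which is precisely what the near-triangulation reduction and chordlessness of $C$ guarantee. Degenerate subcases -- such as $p = 0$, which forces $C$ to be a triangle and can be handled directly -- should also be checked, but contain no real content beyond the coloring argument above.
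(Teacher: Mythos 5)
The paper does not prove Theorem \ref{thomassen5ChooseThm}; it simply cites Thomassen's original result from \cite{AllPlanar5ThomPap} and uses it as a black box. Your proposal is a faithful reconstruction of Thomassen's classic inductive argument from that paper, and it is essentially correct: the reduction to a near-triangulation, the split on whether $C$ has a chord, and (in the chordless case) the device of reserving two colors $\alpha,\beta$ from the list of the boundary vertex $v$ adjacent to $x$ and deleting them from the lists of $v$'s internal neighbors are exactly the ingredients of the standard proof. One small point worth tightening: when $p=0$ (i.e., $v$ has no internal neighbors), the near-triangulation assumption forces not merely that $C$ is a triangle but that $G=C$ itself, after which $v$ can be colored directly from $L(v)\setminus\{\phi(x),\phi(y)\}$; and in your statement ``$xy$ is $L$-colorable'' means one should first fix a proper $L$-coloring $\phi$ of $\{x,y\}$ before speaking of $\phi(x)$, which you do implicitly. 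Neither of these affects the soundness of the argument.
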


Theorem \ref{thomassen5ChooseThm} has the following useful corollary.

\begin{cor}\label{CycleLen4CorToThom} Let $G$ be a planar graph with outer cycle $C$ and let $L$ be a list-assignment for $G$ where each vertex of $G\setminus C$ has a list of size at least five.  If $|V(C)|\leq 4$, then any $L$-coloring of $V(C)$ extends to an $L$-coloring of $G$. \end{cor}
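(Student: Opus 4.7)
The plan is to derive Corollary \ref{CycleLen4CorToThom} from Theorem \ref{thomassen5ChooseThm} by, for each precoloring $\phi$ of $V(C)$, constructing a shrunken list-assignment $L'$ under which Thomassen's theorem produces a coloring that restricts to $\phi$ on $V(C)$.

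In the triangle case $|V(C)| = 3$, I would write $C = v_1 v_2 v_3$ and set $L'(v_3) := \{\phi(v_1), \phi(v_2), \phi(v_3)\}$, which has size exactly 3 by the properness of $\phi$ on the triangle $G[V(C)]$, leaving all other lists unchanged. Applying Theorem \ref{thomassen5ChooseThm} to $G$ with the precolored edge $v_1 v_2$ (colored by $\phi|_{\{v_1, v_2\}}$) yields an $L'$-coloring $\psi$ with $\psi(v_1) = \phi(v_1), \psi(v_2) = \phi(v_2)$, and the adjacencies $v_3 v_1, v_3 v_2$ then force $\psi(v_3) \in L'(v_3) \setminus \{\phi(v_1), \phi(v_2)\} = \{\phi(v_3)\}$. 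Since $\phi(v_3) \in L(v_3)$ and $L'$ agrees with $L$ on $V(G) \setminus \{v_3\}$, $\psi$ is an $L$-coloring of $G$ extending $\phi$.

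In the case $|V(C)| = 4$, I would write $C = v_1 v_2 v_3 v_4$ and split on whether $C$ has a chord in $G$. In the chorded case (WLOG $v_1 v_3 \in E(G)$, the case $v_2 v_4 \in E(G)$ being symmetric), the chord partitions $G$ along $v_1 v_3$ into two subgraphs $G_1, G_2$ sharing the edge $v_1 v_3$ and having outer triangles $v_1 v_2 v_3$ and $v_1 v_3 v_4$ respectively. Since $\phi(v_1) \neq \phi(v_3)$ (as $v_1 v_3$ is an edge of $G[V(C)]$), $\phi$ restricts to a proper coloring on each outer triangle, and applying the triangle case to $G_1, G_2$ yields extensions that agree on $\{v_1, v_3\}$ and combine to an $L$-coloring of $G$ extending $\phi$.

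The main obstacle is the chordless 4-cycle subcase where $v_1 v_3, v_2 v_4 \notin E(G)$. My plan is to apply Theorem \ref{thomassen5ChooseThm} to $G$ with precolored edge $v_1 v_2$ and shrunken lists $L'(v_3) := \{\phi(v_3), \phi(v_2), \phi(v_4)\}$ and $L'(v_4) := \{\phi(v_4), \phi(v_1), \phi(v_3)\}$, attempting to force $\psi|_{\{v_3, v_4\}} = \phi|_{\{v_3, v_4\}}$ via the adjacencies $v_3 v_2, v_4 v_1, v_3 v_4$. Unlike the triangle case, this admits a potential ``swap'' solution $(\psi(v_3), \psi(v_4)) = (\phi(v_4), \phi(v_3))$ alongside the desired $\phi$, and the lists may fail to reach size 3 when $\phi$ uses few colors on $C$ (e.g., the alternating 2-coloring case with $\phi(v_1) = \phi(v_3), \phi(v_2) = \phi(v_4)$). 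I expect the resolution to proceed via a post-Thomassen Kempe-chain exchange in the $\{\phi(v_3), \phi(v_4)\}$-colored subgraph (using the edge $v_3 v_4$ to place $v_3, v_4$ in a common Kempe component), handled with care for the list-coloring validity of the swap; alternatively, in the degenerate 2-coloring case of $\phi$, the colors $\phi(V(C))$ can be stripped from the lists of the $V(C)$-neighbors in $G - V(C)$ and Theorem \ref{thomassen5ChooseThm} applied directly to $G - V(C)$, since each reduced list retains size at least 3.
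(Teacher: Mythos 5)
The paper states Corollary \ref{CycleLen4CorToThom} without proof, so there is no in-paper argument to compare against; I'll assess your proposal on its own terms.

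Your triangle case and chorded-4-cycle case are correct. The difficulty is concentrated, as you anticipate, in the chordless 4-cycle case, and there the proposal has two genuine gaps. First, the Kempe-chain repair is unsound: in list coloring, swapping the two colors $\phi(v_3),\phi(v_4)$ along a Kempe component requires every vertex on the component to carry \emph{both} colors in its list, and that is not guaranteed. A vertex $u$ that Thomassen colored $\phi(v_3)$ need not have $\phi(v_4)\in L(u)$, so the swapped assignment can simply fail to be an $L$-coloring, and there is no way to "handle with care" that fixes this in general. Second, the three-color subcase (say $\phi(v_1)=\phi(v_3)$ but $\phi(v_2)\neq\phi(v_4)$) falls through both of your branches: one of $L'(v_3),L'(v_4)$ has size $2$, so Thomassen's hypothesis is violated, while your "strip $\phi(V(C))$ and delete $V(C)$" alternative only guarantees residual list size $\geq 3$ when $\phi$ uses at most two colors --- a vertex inside $C$ adjacent to three precolored vertices could drop to a list of size $2$. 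Your proposal explicitly covers only the four-color case (incorrectly, via Kempe) and the two-color case (correctly, modulo minor technicalities about $G-V(C)$ possibly being disconnected or block-decomposed and needing an arbitrarily chosen precolored edge), leaving the three-color case unaddressed.

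A cleaner route that handles all three subcases uniformly, and avoids Kempe chains entirely, is to delete only $v_3$ and $v_4$ rather than all of $V(C)$: remove $\phi(v_3)$ from the lists of all neighbors of $v_3$ in $G\setminus V(C)$, and $\phi(v_4)$ from the lists of all neighbors of $v_4$ in $G\setminus V(C)$. Since $C$ is chordless, every such neighbor is interior, and each loses at most two colors, so its reduced list has size $\geq 3$; moreover, every vertex that lost a color lies on the new outer boundary of $G-\{v_3,v_4\}$. The edge $v_1v_2$ is still on that outer boundary and carries the proper precoloring $\phi|_{\{v_1,v_2\}}$, so Theorem \ref{thomassen5ChooseThm} (applied block by block if $G-\{v_3,v_4\}$ is not 2-connected) yields a coloring of $G-\{v_3,v_4\}$ that, reattached to $\phi(v_3),\phi(v_4)$, is a valid $L$-coloring of $G$: the adjacencies $v_3v_2$, $v_4v_1$ and $v_3v_4$ are proper because $\phi$ was proper on $C$, and the adjacencies of $v_3,v_4$ to interior vertices are proper because those colors were stripped. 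This version makes no case distinction on $|\phi(V(C))|$.
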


We now recall some notions from topological graph theory. Given an embedding $G$ on surface $\Sigma$, the deletion of $G$ partitions $\Sigma$ into a collection of disjoint, open connected components called the \emph{faces} of $G$. Our main objects of study are the subgraphs of $G$ bounding the faces of $G$. Given a subgraph $H$ of $G$, we call $H$ a \emph{facial subgraph} of $G$ if there exists a connected component $U$ of $\Sigma\setminus G$ such that $H=\partial(U)$. We call $H$ is called a \emph{cyclic facial subgraph} (or, more simply, a \emph{facial cycle}) if $H$ is both a facial subgraph of $G$ and a cycle. Given  a cycle $C\subseteq G$, we say that $C$ is \emph{contractible} if it can be contracted on $\Sigma$ to a point, otherwise we say it is \emph{noncontractible}. We now introduce two standard paramaters that measure the extent to which an embedding deviates from planarity. The notion of face-width was introduced by Robertson and Seymour in their work on graph minors and has been studied extensively.

\begin{defn}\label{EWandFWDefn} \emph{Let $\Sigma$ be a surface and let $G$ be an embedding on $\Sigma$. The \emph{edge-width} of $G$, denoted by $\textnormal{ew}(G)$, is the length of the shortest noncontractible cycle in $G$. The \emph{face-width} of $G$, denoted by $\textnormal{fw}(G)$, is the smallest integer $k$ such that there exists a noncontractible closed curve of $\Sigma$ which intersects with $G$ on $k$ points. If $G$ has no noncontractible cycles, then we define $\textnormal{ew}(G)=\infty$, and if $g(\Sigma)=0$, then we define $\textnormal{fw}(G)=\infty$. The face-width of $G$ is also sometimes called the \emph{representativity} of $G$. Some authors consider the face-width to be undefined if $\Sigma=\mathbb{S}^2$, but, for our purposes, adopting the convention that $\textnormal{fw}(G)=\infty$ in this case is much more convenient. } \end{defn}

In this paper, we prove the following result: 

\begin{theorem}\label{5ListHighRepFacesFarMainRes} Let $\Sigma$ be a surface, $G$ be an embedding on $\Sigma$ of face-width at least $2^{\Omega(g(\Sigma))}$, and $F_1, \ldots, F_m$ be a collection of facial subgraphs of $G$ which are pairwise of distance at least $2^{\Omega(g(\Sigma))}$ apart. Let $x_1y_1, \ldots, x_my_m$ be a collection of edges in $G$, where $x_iy_i\in E(F_i)$ for each $i=1,\ldots, m$. Let $L$ be a list-assignment for $G$ such that 
\begin{enumerate}[label=\arabic*)]
\itemsep-0.1em
\item for each $v\in V(G)\setminus\left(\bigcup_{i=1}^mV(C_i)\right)$, $|L(v)|\geq 5$; AND
\item For each $i=1,\ldots, m$, $x_iy_i$ is $L$-colorable, and, for each $v\in V(F_i)\setminus\{x_i, y_i\}$, $|L(v)|\geq 3$.
\end{enumerate}
Then $G$ is $L$-colorable.  \end{theorem}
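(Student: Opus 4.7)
The plan is to reduce the general statement to the restricted version proved in Part II, in which the embedding is assumed to have no separating cycles of length three or four. To that end, I would fix a counterexample $G$ to Theorem \ref{5ListHighRepFacesFarMainRes} that is minimal with respect to $|V(G)|+|E(G)|$, and derive a contradiction by showing that such a $G$ must satisfy the hypothesis of the Part II theorem, namely that every cycle of length $3$ or $4$ in $G$ is a facial cycle. First I would record the basic topological consequence of the face-width hypothesis: since $\textnormal{fw}(G)\geq 2^{\Omega(g(\Sigma))}$ dominates any absolute constant, we have $\textnormal{ew}(G)\geq\textnormal{fw}(G)>4$, so every cycle in $G$ of length at most $4$ is contractible and bounds a closed disk in $\Sigma$.

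Now suppose toward a contradiction that $G$ has a nonfacial cycle $C$ with $|E(C)|\leq 4$, and let $\Delta$ be a closed disk bounded by $C$. The easy subcase is the one in which $\Delta$ contains none of the precolored faces $F_1,\ldots,F_m$. In that situation, let $G_1 := G-\textnormal{int}(\Delta)$; this graph has strictly fewer vertices than $G$, inherits the same collection of precolored faces with the same list assignment, and (because collapsing $\Delta$ only enlarges a face of $G$) has face-width at least that of $G$. By minimality $G_1$ is $L$-colorable, so its $L$-coloring restricted to $V(C)$ is an $L$-coloring of $V(C)$ in the planar subgraph $G[\Delta]$; by Corollary \ref{CycleLen4CorToThom} this extends to all of $G[\Delta]$, and pasting the two colorings along $C$ yields an $L$-coloring of $G$, contradicting the assumption that $G$ is a counterexample. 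Thus no such $C$ exists with $\Delta$ free of precolored faces.

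The main obstacle is the symmetric subcase, in which the disk $\Delta$ does contain one or more of the $F_i$. Here we cannot simply delete the interior of $\Delta$, because that would destroy precolored data; instead I would choose $C$ so that $\Delta$ is \emph{innermost} among all such nonfacial short cycles enclosing precolored faces, use the pairwise-distance hypothesis to show that either all or only a proper subset of $\{F_1,\ldots,F_m\}$ lies in $\Delta$, and then apply the main theorem inductively to the smaller planar instance $G[\Delta]$ with outer cycle $C$ (here one treats $C$ as a new precolored facial subgraph whose list constraints on $V(C)$ are inherited from the eventual coloring of the other side). This surgery is the delicate part: one must verify that both the interior planar instance and the complementary instance on $\Sigma\setminus\textnormal{int}(\Delta)$ satisfy, in the appropriate reformulation, the face-width and pairwise-far-apart hypotheses with constants no worse than those required for Part II and for the inductive step. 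Once this verification is complete, both pieces are colorable by induction (or Part II) and can be stitched along $C$ via Corollary \ref{CycleLen4CorToThom}, contradicting minimality of $G$ and finishing the proof.
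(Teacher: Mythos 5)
Your high-level strategy — take a minimal counterexample and cut it along short separating cycles to reduce to the short-inseparable case handled in Part II — is indeed the one the paper follows, and your easy subcase (a nonfacial cycle $C$ of length at most four bounding a disk $\Delta$ free of precolored faces) is handled essentially correctly: delete the interior, color the rest by minimality, and extend into $\Delta$ via Corollary~\ref{CycleLen4CorToThom}. But your hard subcase contains a genuine gap that you have identified as ``delicate'' without realizing why it cannot be closed by the tools you name. When $\Delta$ contains a precolored face $F_i$, nothing in the hypotheses bounds $d(C, F_i)$ from below: the far-apart condition is only between the $F_j$'s, so $F_i$ may be adjacent to $C$. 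After coloring the exterior and restricting to $V(C)$, you propose to treat $C$ as a new precolored facial subgraph of the interior piece and invoke induction or Part II. If $F_i$ is close to $C$, the resulting instance violates the pairwise-distance hypothesis, so neither applies; and Corollary~\ref{CycleLen4CorToThom} only extends a coloring of $V(C)$ when every other interior vertex has a $5$-list, so it is of no use once $F_i$ is present. Choosing $C$ innermost does not help — innermostness controls which cycles are nested, not the distance from $F_i$ to $C$.

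This missing piece is precisely what occupies most of the paper. Theorem~\ref{Main4CycleAnnulusThm} (proved over Sections~\ref{LensIntroSec}--\ref{ThmCutIntoCompWithThomFacesSec}) shows that, in the annulus between two nearby short cycles joined by a path, one can find a bounded-radius $2$-edge-connected subgraph $K$ and a partial coloring of it that is $(L,\psi)$-inert and leaves every remaining component with a Thomassen facial subgraph, so that Thomassen's theorem and the Part~II black box finish the job. Sections~\ref{MainRedThmForTessSec}--\ref{thisisidicritCOMPcomcriTT} then package this into the analysis of $\mathcal{C}$-close separating cycles (red/blue cycles, obstructing cycles, and the relation $\sim$), which is where the distance bookkeeping you gloss over actually lives. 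Two further technicalities you flag but do not resolve are also nontrivial: cutting along an induced $4$-cycle can drop the ordinary face-width, which the paper handles via the modified invariant $\textnormal{fw}^*$ of Definition~\ref{ModifiedFwStarDefn} and Proposition~\ref{FWstarToFWObs}; and one must re-triangulate after each cut (Lemma~\ref{TriangulationCorMainLmemmaused1}) without disturbing the distance conditions. Without the annulus theorem and the $\mathcal{C}$-close machinery, the stitching step in your hard subcase has no justification, so the proposal as written does not constitute a proof.
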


Theorem \ref{5ListHighRepFacesFarMainRes} is slightly stronger than imposing pairwise-far apart components with ordinary 2-colorings. That is, an immediate consequence of Theorem \ref{5ListHighRepFacesFarMainRes} is the following slightly weaker result.

\begin{theorem}\label{WVersionThmPrecCompFW}  Let $\Sigma$ be a surface, $G$ be an embedding on $\Sigma$ of face-width at least $2^{\Omega(g(\Sigma))}$, and $L$-be a list-assignment for $V(G)$ in which every vertex has a list of size at least five, except for the vertices of some connected subgraphs $K_1, \cdots, K_m$ of $G$ which are pairwise of distance at least $2^{\Omega(g(\Sigma))}$ apart, where, for each $i=1, \cdots, m$, there is an $L$-coloring of $K_i$ which is an ordinary 2-coloring. Then $G$ is $L$-colorable. 
 \end{theorem}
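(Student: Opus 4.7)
The plan is to derive Theorem \ref{WVersionThmPrecCompFW} from Theorem \ref{5ListHighRepFacesFarMainRes} via a precolor-and-delete reduction. I would first fix, for each $i$, an ordinary $2$-coloring $\phi_i$ of $K_i$ taken from $L$, using colors $\{a_i,b_i\}$, and pick any edge $x_iy_i \in E(K_i)$ with $\phi_i(x_i)\ne\phi_i(y_i)$. (When $|V(K_i)|=1$, I would first enlarge $K_i$ by adjoining one neighbor and precoloring that neighbor with any element of its list distinct from $\phi_i(v_i)$; this is possible since all vertices outside $\bigcup_i V(K_i)$ have lists of size at least $5$.)

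Next I would form the embedded graph $G^* := G - \bigcup_i (V(K_i)\setminus\{x_i,y_i\})$ and define $L^*(x_i):=\{\phi_i(x_i)\}$, $L^*(y_i):=\{\phi_i(y_i)\}$, and $L^*(v):=L(v)\setminus\{\phi_i(u):u\in N_G(v)\cap V(K_i)\}$ for every other vertex $v$. Since the $K_i$ are pairwise at distance at least $2^{\Omega(g(\Sigma))}\gg 1$, any such $v$ borders at most one $K_i$, so at most $|\{a_i,b_i\}|=2$ colors are removed from $L(v)$, giving $|L^*(v)|\ge 3$, and the edge $x_iy_i$ is $L^*$-colorable.

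The key geometric step is to identify, for each $i$, a facial subgraph $F_i$ of $G^*$ containing $x_iy_i$ together with all former neighbors of $K_i$. By the high face-width of $G$ and the distance hypothesis, each $K_i$ lies in a contractible disk region of $\Sigma$ disjoint from the other $K_j$'s; inside that disk, deleting $V(K_i)\setminus\{x_i,y_i\}$ merges the faces incident to $K_i$ on one side of $x_iy_i$ into a single face whose boundary is the sought $F_i$. (If the deletion produces one new face on each side of $x_iy_i$, I would take both boundaries as facial subgraphs for the $i$-th region; they share only $x_iy_i$, and the pairwise-distance hypothesis on $\{F_i\}$ then follows from that on $\{K_i\}$, up to a constant-factor adjustment.) I would then invoke Theorem \ref{5ListHighRepFacesFarMainRes} on $(G^*,L^*,\{F_i\},\{x_iy_i\})$, using $\textnormal{fw}(G^*)\ge 2^{\Omega(g(\Sigma))}$ inherited from $G$; the resulting $L^*$-coloring of $G^*$, extended by $\phi_i$ on each $V(K_i)\setminus\{x_i,y_i\}$, produces an $L$-coloring of $G$.

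The hard part will be the topological verification that $K_i$ embeds in a disk of $\Sigma$ so that its deletion yields the described facial structure. The high face-width rules out short noncontractible curves, making this immediate when the $K_i$ are small, but since the theorem's hypotheses do not a priori bound $|V(K_i)|$, this step may require either an implicit boundedness observation coming from the application context or a careful argument decomposing $K_i$ into disk-embeddable pieces. Once this geometric claim is in hand, tracking the $2^{\Omega(g(\Sigma))}$ constants through the reduction is routine.
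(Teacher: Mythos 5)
The paper does not actually supply a proof of this statement: it simply asserts that Theorem \ref{WVersionThmPrecCompFW} is ``an immediate consequence'' of Theorem \ref{5ListHighRepFacesFarMainRes} and moves on. So there is no authorial proof to compare against, and I can only evaluate your reduction on its own merits. Your high-level plan --- precolor each $K_i$ by its $2$-coloring $\phi_i$, delete most of $K_i$, and apply Theorem \ref{5ListHighRepFacesFarMainRes} to the resulting graph and list-assignment --- is the natural approach and is surely what the author has in mind.

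However, the key topological claim is false as stated, and the gap is larger than your parenthetical concessions suggest. After deleting $V(K_i)\setminus\{x_i,y_i\}$, the former neighbors of the deleted vertices need not lie on a facial subgraph of $G^*$ containing the retained edge $x_iy_i$. For instance, if $K_i$ is a path $u_1u_2u_3u_4$ and you set $x_iy_i:=u_1u_2$, the face produced by deleting $u_3,u_4$ is incident to $u_2$ but is incident to the edge $u_1u_2$ only if one of the original faces of $G$ through $u_1u_2$ was also incident to $u_3$ or $u_4$, which is not forced. The former neighbors of $u_3,u_4$ then have $L^*$-lists of size possibly as small as $3$ while lying on none of the $F_j$, violating the hypothesis of Theorem \ref{5ListHighRepFacesFarMainRes} that vertices off $\bigcup_j F_j$ have lists of size at least $5$. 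Your fallback of taking the boundaries of the two faces flanking $x_iy_i$ as two separate members of the collection $\{F_j\}$ does not work either: those two facial subgraphs share $x_iy_i$ and hence are at distance $0$, not $2^{\Omega(g(\Sigma))}$. A cleaner route is to delete \emph{all} of $V(K_i)$ and take $F_i$ to be the boundary of the single face of $G-V(K_i)$ that absorbs the hole left by $K_i$ (connectedness of $K_i$ guarantees a unique such face and that every former neighbor of $K_i$ lies on its boundary, with $L^*$-list of size at least $3$), and then choose $x_iy_i$ to be an arbitrary edge of $F_i$. But, as you rightly flag, even this is not automatic: $|V(K_i)|$ is not bounded by the hypotheses, so the face-width of $G-\bigcup_iV(K_i)$ is not a priori comparable to that of $G$; and if a face of $G$ incident to $K_i$ has a long boundary, $F_i$ can acquire vertices far from $K_i$, so the pairwise distance condition on the $F_i$ is no longer immediate. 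Closing this requires, at minimum, a local triangulation step around each $K_i$ in the spirit of Lemma \ref{TriangulationCorMainLmemmaused1}, plus some control on what deleting a large connected subgraph does to the face-width. As it stands, the proposal correctly identifies the reduction to make but does not close its central topological step.
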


\section{Conventions and Structure of this Paper}

Unless otherwise specified, all graphs are regarded as embeddings on a previously specified surface, and all surface are compact, connected, and have zero boundary. If we want to talk about a graph $G$ as an abstract collection of vertices and edges, without reference to sets of points and arcs on a surface then we call $G$ an \emph{abstract graph}. 

\begin{defn}\label{ContractNatCPartDefn}\emph{Let $\Sigma$ be a surface, let $G$ be an embedding on $\Sigma$, and let $C$ be a contractible cycle in $G$. Let $U_0, U_1$ be the two open connected components of $\Sigma\setminus C$. The unique \emph{natural $C$-partition} of $G$ is the pair $\{G_0, G_1\}$ of subgraphs of $G$ where, for each $i\in\{0,1\}$, $G_i=G\cap\textnormal{Cl}(U_i)$.} \end{defn}

\begin{defn}
\emph{Given a graph $G$, a subgraph $H$ of $G$, a subgraph $P$ of $G$, and an integer $k\geq 1$, we call $P$ a \emph{$k$-chord} of $H$ if $|E(P)|=k$ and $P$ is of the following form.}
\begin{enumerate}[label=\emph{\arabic*)}]
\itemsep-0.1em
\item \emph{$P:=v_1\cdots v_kv_1$ is a cycle with $v_1\in V(H)$ and $v_2, \cdots, v_k\not\in V(H)$}; OR
\item \emph{$P:=v_1\cdots v_{k+1}$, and $P$ is a path with distinct endpoints, where $v_1, v_{k+1}\in V(H)$ and $v_2,\cdots, v_k\not\in V(H)$.}
\end{enumerate}

\end{defn}

$P$ is called \emph{proper} if it is not a cycle, i.e $P$ intersects $H$ on two distinct vertices. Otherwise it is called \emph{improper}. Note that, for $1\leq k\leq 2$, any $k$-chord of $H$ is proper, as $G$ has no loops or duplicated edges. A 1-chord of $H$ is simply referred to as a \emph{chord} of $H$. In some cases, we are interested in analyzing $k$-chords of $H$ where the precise value of $k$ is not important. We call $P$ a \emph{generalized chord} of $H$ if there exists an integer $k\geq 1$ such that $P$ is a $k$-chord of $H$. We call $P$ a \emph{proper} generalized chord of $H$ if there is an integer $k\geq 1$ such that $P$ is a proper $k$-chord of $H$. (A proper generalized chord of $H$ is also called an \emph{$H$-path}). We define \emph{improper} generalized chords of $H$ analogously. For any $A, B\subseteq V(G)$, an \emph{$(A,B)$-path} is a path $P=x_0\cdots x_k$ with $V(P)\cap A=\{x_0\}$ and $V(P)\cap B=\{x_k\}$. Given a surface $\Sigma$, an embedding $G$ on $\Sigma$, a cyclic facial subgraph $C$ of $G$, and a proper generalized chord $Q$ of $C$, there is, under certain circumstances, a natural way to partition of $G$ specified by $Q$. 

\begin{defn}\label{ContractNatCQPartChordDefn} \emph{Let $\Sigma$ be a surface, let $G$ be an embedding on $\Sigma$,  let $C$ be a cyclic facial subgraph of $G$ and let $Q$ be a generalized chord of $C$, where each cycle in $C\cup Q$ is contractible. The unique \emph{natural $(C,Q)$-partition} of $G$ is the pair $\{G_0, G_1\}$ of subgraphs of $G$ such that $G=G_0\cup G_1$ and $G_0\cap G_1=Q$, where, for each $i\in\{0,1\}$, there is a unique open connected region $U$ of $\Sigma\setminus (C\cup Q)$ such that $G_i$ consists of all the edges and vertices of $G$ in the closed region $\textnormal{Cl}(U)$.}

\emph{If the facial cycle $C$ is clear from the context then we usually just refer to $\{G_0, G_1\}$ as the \emph{natural $Q$-partition} of $G$. Note that this is consistent with Definition \ref{ContractNatCPartDefn} in the sense that, if $Q$ is not a proper generalized chord of $C$ (i.e $Q$ is a cycle) then the natural $Q$-partition of $G$ is the same as the natural $(C,Q)$-partition of $G$. If $\Sigma$ is the sphere (or plane) then the natural $(C, Q)$-partition of $G$ is always defined for any $C,Q$.}
\end{defn}

\begin{defn} \emph{Let $\Sigma$ be a surface and let $G$ be an embedding on $\Sigma$. A \emph{separating cycle} in $G$ is a contractible cycle $C$ in $G$ such that each of the two connected components of $\Sigma\setminus C$ has nonempty intersection with $V(G)$. We call $G$ \emph{short-inseparable} if $\textnormal{ew}(G)>4$ and $G$ does not contain any separating cycle of length 3 or 4.}\end{defn}

Finally, we recall the following standard notation. 

\begin{defn}

\emph{For any graph $G$, vertex set $X\subseteq V(G)$, integer $j\geq 0$, and real number $r\geq 0$, we set $D_j(X, G):=\{v\in V(G): d(v, X)=j\}$ and $B_r(X, G):=\{v\in V(G): d(v, X)\leq r\}$. Given a subgraph $H$ of $G$, we usually just write $D_j(H, G)$ to mean $D_j(V(H), G)$, and likewise, we usually write $B_r(H, G)$ to mean $B_r(V(H), G)$.}
\end{defn}

If $G$ is clear from the context, then we drop the second coordinate from the above notation to avoid clutter. We now introduce some additional notation related to list-assignments. We frequently analyze the situation where we begin with a partial $L$-coloring $\phi$ of a graph $G$, and then delete some or all of the vertices of $\textnormal{dom}(\phi)$ and remove the colors of the deleted vertices from the lists of their neighbors in $G\setminus\textnormal{dom}(\phi)$. We thus define the following.  

\begin{defn}\label{ListLSvRemove}\emph{Let $G$ be a graph with list-assignment $L$. Let $\phi$ be a partial $L$-coloring of $G$ and $S\subseteq V(G)$. We define a list-assignment $L^S_{\phi}$ for $G\setminus (\textnormal{dom}(\phi)\setminus S)$ as follows.}
$$L^S_{\phi}(v):=\begin{cases} \{\phi(v)\}\ \textnormal{if}\ v\in\textnormal{dom}(\phi)\cap S\\ L(v)\setminus\{\phi(w): w\in N(v)\cap (\textnormal{dom}(\phi)\setminus S)\}\ \textnormal{if}\ v\in V(G)\setminus \textnormal{dom}(\phi) \end{cases}$$ \end{defn}

If $S=\varnothing$, then $L^{\varnothing}_{\phi}$ is a list-assignment for $G\setminus\textnormal{dom}(\phi)$ in which the colors of the vertices in $\textnormal{dom}(\phi)$ have been deleted from the lists of their neighbors in $G\setminus\textnormal{dom}(\phi)$. The situation where $S=\varnothing$ arises so frequently that, in this case, we drop the superscript and let $L_{\phi}$ denote the list-assignment $L^{\varnothing}_{\phi}$ for $G\setminus\textnormal{dom}(\phi)$. In some cases, we specify a subgraph $H$ of $G$ rather than a vertex-set $S$. In this case, to avoid clutter, we write $L^H_{\phi}$ to mean $L^{V(H)}_{\phi}$. Finally, given two partial $L$-colorings $\phi$ and $\psi$ of $V(G)$, where $\phi(x)=\psi(x)$ for all $x\in\textnormal{dom}(\phi)\cap\textnormal{dom}(\psi)$, and $\phi(x)\neq\psi(y)$ for all edges $xy$ with $x\in\textnormal{dom}(\phi)$ and $y\in\textnormal{dom}(\psi)$, there is natural well-defined $L$-coloring $\phi\cup\psi$ of $\textnormal{dom}(\phi)\cup\textnormal{dom}(\psi)$, where
$$(\phi\cup\psi)(x)=\begin{cases}\phi(x)\ \textnormal{if}\ x\in\textnormal{dom}(\phi)\\ \psi(x)\ \textnormal{if}\ x\in\textnormal{dom}(\psi)\end{cases}$$

We frequently deal with situations where we have a set $Z$ of vertices that we want to delete, and it is desirable to color as few of them as possible in such a way that we can safely delete the remaining vertices of $Z$ without coloring them. We thus introduce the following definition.

\begin{defn}\emph{Let $G$ be a graph with a list-assignment $L$. Given a subset $Z\subseteq V(G)$ and a partial $L$-coloring $\phi$ of $V(G)$, we say that $Z$ is \emph{$(L, \phi)$-inert in $G$} if every extension of $\phi$ to an $L$-coloring of $G\setminus (Z\setminus\textnormal{dom}(\phi))$ extends to an $L$-coloring of all of $G$.}\end{defn}

The structure of the remainder of this paper is as follows. To prove Theorem \ref{5ListHighRepFacesFarMainRes}, we first need to prove an intermediate result, which is Theorem \ref{Main4CycleAnnulusThm}. We prove Theorem \ref{Main4CycleAnnulusThm} over Sections \ref{LensIntroSec}-\ref{ThmCutIntoCompWithThomFacesSec}. Then, using Theorem \ref{Main4CycleAnnulusThm} as a black box, we prove Theorem \ref{5ListHighRepFacesFarMainRes} over the course of Sections \ref{SimpleExemaxLemmaFormodblockRem}-\ref{thisisidicritCOMPcomcriTT}. 

\section{An Annulus Between Short Facial Cycles}\label{LensIntroSec}

To state Theorem \ref{Main4CycleAnnulusThm}, we first need the following definition. 

\begin{defn} \emph{Let $G$ be a graph and $L$ be a list-assignment for $G$. A facial subgraph $F$ of $G$ is said to be a \emph{Thomassen facial subgraph} of $G$ with respect to $L$ if there is a list $xy\in E(F)$ such that $xy$ is $L$-colorable and each vertex of $F\setminus\{x, y\}$ has an $L$-list of size at least three.}  \end{defn}

\begin{theorem}\label{Main4CycleAnnulusThm} Let $G$ be a short-inseparable planar graph and let $F, F'$ be two facial subgraphs of $G$, each of which is a cycle of length at most four, where $F$ is the outer face of $G$. Let $P$ be an $(F, F')$-path, and let $n:=2|E(P)|+8$ and $r:=\lceil\log_2(n)\rceil+2$. Let $L$ be a list-assignment for $V(G)$ and $\phi$ be an $L$-coloring of $V(F\cup F'\cup P)$ such that
\begin{enumerate}[label=\arabic*)]
\itemsep-0.1em
\item $\phi$ extends to $L$-color $B_{r(n-1)}(F\cup F'\cup P)$; AND
\item For each $v\in B_{rn+1}(F\cup F'\cup P)$, every facial subgraph of $G$ containing $v$, except possibly $F, F'$, is a triangle, and, if $v\not\in V(F\cup F')$, then $|L(v)|\geq 5$.
\end{enumerate}
Then there is a 2-edge-connected subgraph $K$ of $G$ with $F\cup F'\cup P\subseteq K$ and an extension of $\phi$ to a partial $L$-coloring $\psi$ of $V(K)$ such that
\begin{enumerate}[label=\arabic*)]
\itemsep-0.1em
\item $V(K)$ is $(L, \psi)$-inert in $G$ and $V(K)\subseteq B_{rn}(F\cup F'\cup P)$; AND
\item For each connected component $H$ of $G\setminus K$, the outer face of $H$ is a Thomassen facial subgraph of $H$ with respect to $L_{\phi}$.
\end{enumerate}
 \end{theorem}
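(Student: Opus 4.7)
The plan is to build $K$ as a carefully chosen thickening of $F\cup F'\cup P$ by concentric distance layers. The logarithmic exponent $r=\lceil\log_2 n\rceil+2$ strongly suggests an iterated doubling (pigeonhole) across $O(\log n)$ scales, each of thickness $n$ layers, where a measure of ``boundary badness'' shrinks by a factor of $2$ at each scale. For $j\in\{0,\ldots,r\}$, I would consider candidate subgraphs $K_j\subseteq G[B_{jn}(F\cup F'\cup P)]$ obtained by adding vertices shell by shell, and use the $L$-extension supplied by hypothesis~1 to define $\psi$ by restriction to $V(K)$ once we fix $K=K_{j^*}$ for some good $j^*\leq r$.

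The central step is to study the outer boundary of a component $H$ of $G\setminus K_j$. Since $G$ is planar and short-inseparable, and since hypothesis~2 forces the interior of $B_{rn+1}(F\cup F'\cup P)$ to be a triangulation in which every non-precolored vertex has a $5$-list, the outer boundary of $H$ is a closed walk lying in a thin annular strip near distance $jn$ from the precolored part. For this walk to be a Thomassen facial subgraph with respect to $L_\phi$, we need at least one edge on it to be $L_\phi$-colorable and every other vertex to have $L_\phi$-list of size at least $3$. Vertices at distance greater than $1$ from $\mathrm{dom}(\phi)=V(F\cup F'\cup P)$ retain their original $5$-lists, so the only obstructions are (i) short chords or separating cycles cutting into the strip, which short-inseparability tightly controls, and (ii) localized defects near the few vertices of $\mathrm{dom}(\phi)$ adjacent to the strip. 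I would introduce a numerical defect $\mathrm{def}(j)$ bounded initially by $O(n)$ in terms of $|E(P)|+|E(F)|+|E(F')|$, which by hypothesis is already $O(n)$.

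The doubling step shows $\mathrm{def}(j+1)\leq \mathrm{def}(j)/2$ whenever $\mathrm{def}(j)>0$, exploiting the fact that each defect at depth $jn$ forces, via the triangular faces and the prohibition on separating $3$- and $4$-cycles, a neighboring defect at depth $(j+1)n$ with which it must merge. After $r$ rounds starting from $\mathrm{def}(0)<n$, we reach a $j^*\leq r$ with $\mathrm{def}(j^*)=0$, at which point $K:=K_{j^*}$, with minor local surgery (absorbing one vertex per would-be bridge into $K$, using the slack provided by the $+2$ in $r$) to enforce 2-edge-connectedness, satisfies the conclusion. We define $\psi$ as the restriction to $V(K)$ of the extension guaranteed by hypothesis~1. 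The hardest part will be formulating the defect measure with the right properties: it must start below $2^{r-2}$, strictly halve at each scale, and its vanishing must certify both the Thomassen boundary condition and (after surgery) 2-edge-connectedness. A secondary difficulty is ensuring that the local surgery to eliminate bridges is compatible with the existing partial $L$-coloring — an issue handled by the fact that hypothesis~1 guarantees an extension out to radius $r(n-1)$, one full scale beyond $rn$, leaving ample buffer room for modifications without disturbing the precolored structures.
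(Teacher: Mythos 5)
Your proposal captures the right heuristic (``something halves over $\log n$ rounds'') but the actual mechanism in the paper is quite different, and the gaps in your outline would be hard to close.

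First, the paper does not take $K$ to be a ball $B_{j^*n}(F\cup F'\cup P)$ chosen by pigeonhole. It first reduces to a single boundary cycle by splitting each internal vertex of $P$ into two copies, producing an auxiliary embedding $G^\dagger$ in which $F\cup F'\cup P$ becomes a cycle $C^\dagger$ of length at most $n$; this is what feeds into Lemma \ref{MainThmSSFSec}. Then $K$ is assembled from a $(k,r)$-\emph{skeleton} (Proposition \ref{k+rboundpart}): a 2-connected subgraph whose inward-facing facial subgraphs are either structurally trivial or have $\textit{NDepth}\geq r$. That structural decomposition is what lets the halving argument operate independently inside each such facial region. A ball has no such structure: its boundary need not be a single cycle, need not be 2-edge-connected, and can enclose arbitrarily long chords and 2-chords that your defect measure cannot see. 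Your proposed ``local surgery to absorb bridges'' has to contend with exactly these objects, and nothing in the outline handles them.

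Second, the halving happens one graph-distance layer at a time, not one scale of $n$ layers at a time. Proposition \ref{OneStepIntermProp} halves the number of vertices of $\Delta^{2p}(C^*)$ whose residual list has size $<3$ after coloring the current boundary cycle, in a single step from $C$ to $F^1(C)$; Lemma \ref{RepUseCorMainSSF1} iterates this $r-1$ times within a region of nonsplit depth at least $r$. Your claim that defects at depth $jn$ ``force a neighboring defect at depth $(j+1)n$ with which they must merge'' does not have a mechanism: two deficient-list vertices $n$ layers apart do not interact. The real halving is a delicate color-choosing argument (Claims \ref{MainClaimforIntermPropSSF} and \ref{HalveMinusNonEmptyCL}) that leaves certain midpoint vertices uncolored-but-inert.

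Third, $\psi$ is not the restriction to $V(K)$ of an arbitrary extension supplied by hypothesis 1; it is specifically constructed with some vertices of $K$ left uncolored so that $V(K)$ is $(L,\psi)$-inert and the boundary Thomassen condition holds. A restriction of a generic extension would not be inert.

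Finally, a numerical slip: you wrote that hypothesis 1 gives an extension ``out to radius $r(n-1)$, one full scale beyond $rn$,'' but $r(n-1)=rn-r<rn$. There is no buffer beyond $rn$; the slack in the paper's argument comes from the structural decomposition, not from extra coloring radius.
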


We use Theorem \ref{Main4CycleAnnulusThm} in this step as a black box to reduce to a smaller counterexample to Theorem \ref{5ListHighRepFacesFarMainRes}. The proof of Theorem \ref{Main4CycleAnnulusThm} has two main ingredients: Propositions \ref{k+rboundpart} and \ref{OneStepIntermProp}. We prove Proposition \ref{k+rboundpart} in the remainder of Section \ref{LensIntroSec} and prove Proposition \ref{OneStepIntermProp} in Section \ref{NonsplitLensesSec}. Finally, in Section \ref{ThmCutIntoCompWithThomFacesSec}, we combine these two propositions to prove Theorem \ref{Main4CycleAnnulusThm}. Note that Propositions \ref{k+rboundpart} is a purely structural result, i.e. it does not deal with list-assignments. 

\begin{defn}
\emph{Let $G$ be a short-inseparable, 2-connected planar graph.  A facial subgraph $F$ of $G$ is said to be \emph{inward-facing} if we have either $G=F$ or $F$ is not the outer face of $G$. Given a cycle $C\subseteq G$, we define the following.}
\begin{enumerate}[label=\emph{\arabic*)}]
\itemsep-0.1em
\item\emph{For any integer $k\geq 0$, we say that $C$ is \emph{$k$-triangulated} if, for every $v\in B_k(C)\cap V(\textnormal{Int}(C))$, every facial subgraph of $\textnormal{Int}(C)$ containing $v$, except possibly $C$, is a triangle.}
\item \emph{For any integer $k\geq 1$, we define $\Delta^{\geq k}(C):=\{u\in V(\textnormal{Int}(C))\setminus V(C): |N(v)\cap V(C)|\geq k\}$}
\item \emph{We define $\Delta^{2p}(C)$ to be the set of $u\in\Delta^{\geq 3}(C)$ such that either $C[N(v)\cap V(C)]$ is a subpath of $C$ of length two or $|V(C)|=3$ and $V(C)\subseteq N(v)$.}
\end{enumerate} \end{defn} 

\begin{defn}\label{splitandnonsplitdefinitions}\emph{Let $G$ be a 2-connected, short-inseparable planar graph and let $C\subseteq G$ be a cycle with $|V(C)|>3$. We say that $C$ is \emph{$G$-split} if at least one of the following holds.}
\begin{enumerate}[label=\emph{\arabic*)}]
\itemsep-0.1em
\item\emph{$C$ is not 0-triangulated}; OR
\item $\Delta^{\geq 3}(C)=\varnothing$; OR
\item\emph{$C$ has a chord in $\textnormal{Int}_G(C)$}; OR
\item\emph{There is a 2-chord of $C$ in $\textnormal{Int}_G(C)$ whose endpoints are of distance at least three apart in $C$.}
\end{enumerate}
\emph{We say that $C$ is \emph{$G$-nonsplit} if it not $G$-split.}
\end{defn}

We have the following by a simple induction argument:

\begin{obs}\label{inducprecollen}
Let $G$ be a short-inseparable, 2-connected planar graph and $C\subseteq G$ be a $G$-nonsplit cycle, where $|V(C)|>3$. Then there is a sequence of cycles $(C^i: i=0,1,2\cdots)$ in $G$, and a sequence of subgraphs $(H^i: i=0,1,\cdots)$ of $G$, such that $H^0=C^0=C$, where $H^0\subseteq H^1\subseteq\cdots$, and, for each $i=0,1,\cdots$, the following hold.

\begin{enumerate}[label=\arabic*)]
\itemsep-0.1em
\item $H^i$ is 2-connected and $H^i=\textnormal{Ext}(C^i)\cap\textnormal{Int}(C)$; AND
\item $|E(C^i)|=|E(C)|$ and every facial subgraph of $H^i$, except for $C, C^i$, is a triangle; AND
\item $H^{i+1}$ is the graph obtained from $C^i$ by adding to the disc bounded by $C^i$ all the vertices of $\Delta^{2p}(C^i)\cap B_1(C)$ and all the edges between $V(C^i)$ and $\Delta^{2p}(C^i)\cap B_1(C)$; AND
\item $V(C^i\setminus C)\subseteq V(C^{i+1}\setminus C)$. 
\end{enumerate}

\end{obs}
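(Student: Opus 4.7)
The plan is to argue by induction on $i$. In the base case $i = 0$, we set $H^0 = C^0 = C$; the four conditions hold trivially. For the inductive step, suppose $H^i$ and $C^i$ have been constructed and satisfy the four conditions. We define $H^{i+1}$ exactly as prescribed in condition 3, that is, by adding to $H^i$ every vertex $u \in \Delta^{2p}(C^i) \cap B_1(C)$ (each such $u$ lies in the open disc bounded by $C^i$ disjoint from $C$) together with all the edges from $u$ to its neighbors in $V(C^i)$.

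The crucial local claim is the following: each $u \in \Delta^{2p}(C^i) \cap B_1(C)$ has exactly three neighbors on $C^i$ forming a subpath $a_u b_u c_u$ of $C^i$ of length two (by the definition of $\Delta^{2p}$ together with $|V(C^i)| = |V(C)| > 3$), and the middle vertex $b_u$ of this subpath lies in $V(C)$. For the latter, observe that $V(C) \subseteq V(\textnormal{Ext}(C^i))$, so any neighbor $w$ of $u$ in $V(C)$ must in fact lie on $V(C) \cap V(C^i) \subseteq \{a_u, b_u, c_u\}$; a planar case analysis, using the nonsplit property of $C$ and the triangulated structure of $H^i$ (condition 2), rules out $w \in \{a_u, c_u\}$, as each such alternative would produce a chord or 2-chord of $C$ forbidden by the nonsplit definition, and therefore forces $w = b_u$. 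The same kind of argument shows that for distinct $u, u' \in \Delta^{2p}(C^i) \cap B_1(C)$, the associated subpaths overlap at most at endpoints, making the substitutions below pairwise compatible.

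We then define $C^{i+1}$ as the cycle obtained from $C^i$ by simultaneously replacing each $b_u$ with the corresponding $u$ (so each $a_u b_u c_u$ becomes $a_u u c_u$). The remaining items are now routine: $|E(C^{i+1})| = |E(C^i)| = |V(C)|$ since each substitution preserves length; $H^{i+1}$ is 2-connected since each new vertex $u$ has degree three in $H^{i+1}$ while $H^i$ was already 2-connected; every facial subgraph of $H^{i+1}$ other than $C$ and $C^{i+1}$ is a triangle, since the only new faces are the triangles $a_u u b_u$ and $b_u u c_u$ carved out inside the former face bounded by $C^i$; and $V(C^i \setminus C) \subseteq V(C^{i+1} \setminus C)$ because the only vertices of $C^i$ missing from $C^{i+1}$ are the $b_u$'s, which by the main claim all lie in $V(C)$. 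The chief obstacle is the planar case analysis verifying $b_u \in V(C)$ and the pairwise compatibility of the substitutions; both reduce to a careful exploitation of the forbidden chord/2-chord configurations from the nonsplit definition.
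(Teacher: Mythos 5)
The induction scaffolding here is right, and you correctly identify the crux: for $u\in\Delta^{2p}(C^i)\cap B_1(C)$ with neighbourhood $\{a_u,b_u,c_u\}$ on $C^i$, the middle vertex $b_u$ must lie on $C$. The observation that any neighbour of $u$ in $V(C)$ must land in $\{a_u,b_u,c_u\}$ (because $V(C)\subseteq V(\mathrm{Ext}(C^i))$) is also correct. However, the way you try to close the argument has two genuine problems.

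First, the statement ``rules out $w\in\{a_u,c_u\}$'' cannot be what is actually proved: at $i=0$ one has $C^0=C$, so $a_u,b_u,c_u$ all lie on $V(C)$ and $w=a_u$ is a perfectly valid choice. What needs ruling out is the \emph{hypothesis} $b_u\notin V(C)$, and the implication ``if $b_u\notin V(C)$ then $w\in\{a_u,c_u\}$'' is only the opening move of that contradiction, not the claim itself. Second, and more seriously, the asserted source of the contradiction --- ``each such alternative would produce a chord or 2-chord of $C$ forbidden by the nonsplit definition'' --- is simply not where the contradiction comes from in general. Take the subcase $b_u\notin V(C)$ with $a_u,c_u\in V(C)$ at $C$-distance two, say with common $C$-neighbour $d$. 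The 2-chord $a_u u c_u$ has endpoints at distance two and is \emph{not} forbidden by nonsplitness; what kills this configuration is that the 4-cycle $a_u\, d\, c_u\, u$ encloses $b_u$ and therefore separates, contradicting short-inseparability. The subcase where exactly one of $a_u,c_u$ lies on $C$ produces no chord or 2-chord of $C$ at all, and again it is a separating 3- or 4-cycle that provides the contradiction. The same is true for the pairwise-compatibility of the substitutions: the obstruction to two fans sharing an edge (or a middle vertex) is a separating 4-cycle through the two fan vertices, not a forbidden chord. So the ``careful exploitation of the forbidden chord/2-chord configurations from the nonsplit definition'' that you defer to does not cover the decisive cases; short-inseparability of $G$ is the load-bearing hypothesis throughout, and as written the argument has a gap at precisely the step you flag as the chief obstacle.
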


Note that the elements of $(C^i: i=0,1,\cdots)$ are uniquely specified since $|V(C)|>3$. Since $G$ is a finite graph, there exists an index $r$ such that $\Delta^{2p}(C^r)\cap B_1(C)=\varnothing$, and, in particular, $H^r=H^j$ for all $r\geq j$, where $(H^i: i=0,1,\cdots)$ is as an Observation \ref{inducprecollen}. We let $w_G(C)$ denote this unique terminal element of $(H^i: i=0,1,\cdots)$ and we let $F_G(C)$ denote the unique terminal cycle of $(C^i: i=0, 1, \cdots)$. That is, letting $r$ be as above, we have $w_G(C):=H^r$ and $F_G(C):=C^r$. We usually just drop the subscript $G$ from this notation to avoid clutter, as the underlying $G$ is usually clear from the context. Note that. since $C$ is $G$-nonsplit, we have $\Delta^{2p}(C)\neq\varnothing$ by definition, and, in particular, $r>0$. Observation \ref{inducprecollen} describes an operation which allows us to define an ascending sequence of subgraphs of a given short-inseparable, 2-connected planar graph. 

\begin{obs}\label{trianglesplitobs} \emph{Let $G$ be a short-inseparable, 2-connected planar graph and let $C\subseteq G$ be a cycle with $|V(C)|>3$. For each $k\geq 0$, there is a subgraph $w^k(C)$ of $G$ and a facial cycle $F^k(C)$ of $w^k(C)$ specified inductively as follows. We set $w^0(C):=C$ and $F^0(C):=C$. Then, for each $k\geq 0$,}
\begin{enumerate}[label=\alph*)]
\item If $F^{k}(C)$ is $G$-split, then $w^{k+1}(C):=w^{k}(C)$ and $F^{k+1}(C):=F^k(C)$; AND 
\item Otherwise $F^k(C)$ is a cycle of length $|V(C)|$ and we have $F^{k+1}(C):=F(F^k(C))$ and $w^{k+1}(C):=\textnormal{Int}(C)\cap\textnormal{Ext}(F^{k+1}(C))=w^k(C)\cup w(F^k(C))$.
\end{enumerate}
\end{obs}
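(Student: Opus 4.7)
The plan is to verify the inductive construction by induction on $k$. The base case $k=0$ is immediate from the definitions $w^0(C):=C$ and $F^0(C):=C$, which trivially give a subgraph of $G$ together with a facial cycle of that subgraph of length $|V(C)|>3$. For the inductive step, I would assume $w^k(C)$ and $F^k(C)$ have already been constructed, with $F^k(C)$ a facial cycle of $w^k(C)$ of length $|V(C)|$ satisfying $w^k(C)=\textnormal{Int}(C)\cap\textnormal{Ext}(F^k(C))$. Case (a) is immediate: if $F^k(C)$ is $G$-split then the same pair carries forward unchanged, and there is nothing to verify. For case (b), I would apply Observation \ref{inducprecollen} to the cycle $F^k(C)$, which is permissible precisely because $F^k(C)$ is $G$-nonsplit and has length $|V(C)|>3$.

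Applying Observation \ref{inducprecollen} to $F^k(C)$ yields both the terminal cycle $F(F^k(C))$ and the subgraph $w(F^k(C))$, where by conclusion 2) of that observation, $F(F^k(C))$ has length $|V(F^k(C))|=|V(C)|$. Setting $F^{k+1}(C):=F(F^k(C))$ and $w^{k+1}(C):=w^k(C)\cup w(F^k(C))$, the main remaining task is to verify the identity $w^{k+1}(C)=\textnormal{Int}(C)\cap\textnormal{Ext}(F^{k+1}(C))$. By the inductive hypothesis, $w^k(C)$ consists of the portion of $G$ lying in the closed region bounded externally by $C$ and internally by $F^k(C)$, while by the construction in Observation \ref{inducprecollen}, $w(F^k(C))$ consists of the portion of $G$ lying in the closed region bounded externally by $F^k(C)$ and internally by $F^{k+1}(C)$. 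Since these two closed planar regions overlap precisely on the cycle $F^k(C)$, their union is the closed region bounded externally by $C$ and internally by $F^{k+1}(C)$, which gives the claimed identity. Finally, $F^{k+1}(C)$ is a facial cycle of $w^{k+1}(C)$ because it is the inner boundary cycle of this combined annular region.

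The main obstacle is the planar-topological bookkeeping in case (b): one has to carefully confirm that the subgraphs $w^k(C)$ and $w(F^k(C))$ overlap only on the cycle $F^k(C)$, and that their union produces the annular region between $C$ and $F^{k+1}(C)$ without omission or doubling. Once this is in hand, the observation follows by iterated application of Observation \ref{inducprecollen}, and no further ingredients are needed.
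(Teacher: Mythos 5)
Your proposal is correct and takes essentially the same (implicit) route the paper intends: the paper states Observation \ref{trianglesplitobs} without a written proof, treating it as an iterated application of Observation \ref{inducprecollen}, and your induction on $k$ with the annular bookkeeping in case (b) — checking that $w^k(C)$ and $w(F^k(C))$ share exactly the cycle $F^k(C)$ and that their union is the region between $C$ and $F^{k+1}(C)$ — is the natural way to make that precise.
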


The above motivates the following.

\begin{defn}\emph{Let $G$ and let $C$ be a cycle in $G$. We define the \emph{nonsplit depth} of $C$, denoted by $\textit{NDepth}(C)$, as follows.}
\begin{enumerate}[label=\emph{\arabic*)}]
\itemsep-0.1em
\item\emph{If $|V(C)|=3$ then $\textit{NDepth}(C)=\infty$} 
\item\emph{Otherwise, $\textit{NDepth}(C)$ is the smallest integer $r$ such that $F^r(C)$ is $G$-split.}
\end{enumerate}
\end{defn}

Note that if $|V(C)|>3$, then $NDepth(C)<\infty$, since $|V(G)|$ is finite. The quantity $\textit{NDepth}(C)$ is a measure of how far we have to travel from $C$ in $\textnormal{Int}(C)$ before our graph deviates from a certain structure. 

\begin{obs} Let $G$ be a 2-connected, short-inseparable planar graph, $C\subseteq G$ be a cycle with $|V(C)|>3$, and $k\geq 0$. Then $V(w^k(C))\subseteq B_k(C)\cap\textnormal{Int}(C)$, and each facial subgraph of $w^k(C)$, except $C$ and $F^k(C)$, is a triangle. \end{obs}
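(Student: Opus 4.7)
The plan is to proceed by induction on $k$.

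For the base case $k=0$, we have $w^0(C)=F^0(C)=C$ by definition, so $V(w^0(C))=V(C)\subseteq B_0(C)\cap\textnormal{Int}(C)$ holds trivially. As a planar subgraph, the graph $C$ has only the faces bounded by $C$ itself, which are excluded by the exception clause (since $C=F^0(C)$), so the claim holds vacuously.

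For the inductive step, assume the statement for $k$ and consider $w^{k+1}(C)$. Split on whether $F^k(C)$ is $G$-split. If it is, then by Observation \ref{trianglesplitobs}(a) both $w^{k+1}(C)=w^k(C)$ and $F^{k+1}(C)=F^k(C)$, and the claim for $k+1$ follows from the inductive hypothesis together with the inclusion $B_k(C)\subseteq B_{k+1}(C)$.

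The interesting case is when $F^k(C)$ is $G$-nonsplit, where Observation \ref{trianglesplitobs}(b) gives $w^{k+1}(C)=w^k(C)\cup w(F^k(C))$ and $F^{k+1}(C)=F(F^k(C))$. For the distance bound, the inductive hypothesis tells us that $F^k(C)$ is a facial cycle of $w^k(C)$, so $V(F^k(C))\subseteq V(w^k(C))\subseteq B_k(C)$. Applying Observation \ref{inducprecollen} with $F^k(C)$ in the role of $C$, every vertex of $w(F^k(C))$ either lies on $F^k(C)$ or was introduced at some stage as a member of $\Delta^{2p}(C^i)\cap B_1(F^k(C))$; in either case it lies in $B_1(F^k(C))$, hence in $B_{k+1}(C)$. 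The containment in $\textnormal{Int}(C)$ is automatic since $w(F^k(C))\subseteq\textnormal{Int}(F^k(C))\subseteq\textnormal{Int}(C)$.

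Finally, I verify the facial property. The two pieces $w^k(C)$ and $w(F^k(C))$ are glued along the common cycle $F^k(C)$, which bounds the inner face of $w^k(C)$ on one side and the outer face of $w(F^k(C))$ on the other; these two faces are absorbed into the interior of the union and are no longer faces of $w^{k+1}(C)$. Every other face of $w^k(C)$ is, by induction, either $C$ or a triangle, while every other face of $w(F^k(C))$ is, by Observation \ref{inducprecollen}(2), either $F^{k+1}(C)$ or a triangle. Hence the faces of $w^{k+1}(C)$ other than $C$ and $F^{k+1}(C)$ are exactly the triangular faces inherited from the two pieces, completing the induction. The only step that requires any care is this face-merging argument: one needs to invoke the standard Jordan-curve fact that gluing two plane subgraphs along a cycle that is facial on both sides destroys precisely those two faces and preserves all others unchanged.
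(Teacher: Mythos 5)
The paper states this Observation without proof, so there is no in-paper argument to compare against; your induction on $k$, splitting on whether $F^k(C)$ is $G$-split and, in the nonsplit case, combining the inductive hypothesis on $w^k(C)$ with Observation \ref{inducprecollen} applied to $F^k(C)$, is exactly the routine verification the paper leaves implicit. The one ingredient worth making slightly more explicit is that Observation \ref{inducprecollen} applies because Observation \ref{trianglesplitobs}(b) guarantees $|V(F^k(C))|=|V(C)|>3$ when $F^k(C)$ is $G$-nonsplit, and that because $\Delta^{2p}(F^k(C))\neq\varnothing$ (by Definition \ref{splitandnonsplitdefinitions}) the iteration in Observation \ref{inducprecollen} is nondegenerate, so $F^{k+1}(C)\neq F^k(C)$ and the gluing along $F^k(C)$ genuinely merges the two annular pieces. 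With those remarks, the proof is correct.
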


\begin{defn} \emph{Let $G$ be a 2-connected, short-inseparable planar graph and $C\subseteq G$ be a cycle. Given integers $k,r\geq 0$, we say that $C$ is \emph{$(k, r)$-partitionable in $G$} if there exists a 2-connected subgraph $K$ of $\textnormal{Int}(C)$, with $C\subseteq K$ and $V(K)\subseteq B_k(C)$, such that, for every inward-facing facial subgraph $D$ of $K$, one of the following holds.
\begin{enumerate}[label=\arabic*)]
\itemsep-0.1em
\item $\Delta^{\geq 3}(D)=\varnothing$ and $D$ has no chord in $\textnormal{Int}(D)$; OR
\item $NDepth(D)\geq r$ and $|V(D)|\leq |V(C)|$.
\end{enumerate}
 The graph $K$ is called a \emph{$(k,r)$-skeleton} of $C$.} \end{defn} 

We can now state the prove the first of the two propositions we need for the proof of Theorem \ref{MainThmSSFSec}. 

\begin{prop}\label{k+rboundpart} Let $G$ be a 2-connected planar graph and $C\subseteq G$ be a cycle. Let $k, r$ be nonnegative integers with $k\geq r(|V(C)|-3)$, where $C$ is $(k+r)$-triangulated. Then $C$ is $(k,r)$-partitionable. \end{prop}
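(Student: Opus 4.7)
The plan is to proceed by induction on $(r, |V(C)|)$ in lexicographic order. The two trivial base cases are $r = 0$ (take $K := C$, whose only inward-facing face $C$ satisfies $\textit{NDepth}(C) \geq 0 = r$ and $|V(C)| \leq |V(C)|$) and $|V(C)| = 3$ (again $K := C$, now with $\textit{NDepth}(C) = \infty$). Assuming $r \geq 1$ and $|V(C)| \geq 4$, I note that $k + r \geq 1$ and $C$ is $(k+r)$-triangulated, so in particular $C$ is $0$-triangulated; hence any $G$-splitting of $C$ must come from a chord in $\textnormal{Int}(C)$, from a $2$-chord in $\textnormal{Int}(C)$ with endpoints at cyclic distance $\geq 3$ in $C$, or from $\Delta^{\geq 3}(C) = \varnothing$.

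In the chord case, a chord in $\textnormal{Int}(C)$ partitions $C$ into subcycles $C_1, C_2$ with $V(C_i) \subseteq V(C)$ and $|V(C_i)| < |V(C)|$; each $C_i$ inherits $(k+r)$-triangulation from $C$, the bound $k \geq r(|V(C_i)| - 3)$ still holds, and induction on $|V(C)|$ produces $(k, r)$-skeletons $K_1, K_2$ that I would glue as $K := K_1 \cup K_2$. In the $2$-chord case, with $P = uwv$ of cyclic distance $\geq 3$, the subcycles $C_1, C_2$ satisfy $V(C_i) \subseteq V(C) \cup \{w\} \subseteq B_1(C)$ and $|V(C_i)| \leq |V(C)| - 1$; here I would recurse on $C_i$ with parameters $(k-1, r)$, using $r \geq 1$ to derive $k - 1 \geq r(|V(C_i)| - 3)$ from the starting hypothesis, and set $K := K_1 \cup K_2$.

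If $C$ has no chord and no far $2$-chord, then either $\Delta^{\geq 3}(C) = \varnothing$, in which case I would take $K := C$ (satisfying condition 1) of $(k, r)$-partitionability), or $C$ is $G$-nonsplit. In the latter case Observation \ref{inducprecollen} produces the cycle $F(C)$ of length $|V(C)|$ with $V(F(C)) \subseteq B_1(C)$, together with the $2$-connected subgraph $w(C) \subseteq B_1(C)$ all of whose faces other than $C$ and $F(C)$ are triangles. I would then apply the induction to $F(C)$ with parameters $(k-1, r-1)$, which is valid since $|V(C)| - 3 \geq 1$ gives $k - 1 \geq (r-1)(|V(C)| - 3)$, obtaining a skeleton $K'$; then set $K := w(C) \cup K'$.

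For the $K$ constructed in each case, I would verify the three defining properties of a $(k,r)$-skeleton: $2$-connectivity follows because consecutive pieces overlap along a common edge (chord case), the path $uwv$ ($2$-chord case), or the cycle $F(C)$ (nonsplit case); the containment $V(K) \subseteq B_k(C)$ holds because the extra unit of $k$ spent in the $2$-chord and nonsplit cases absorbs the distance-$1$ offset from $C$ to $C_i$ or to $F(C)$; and every inward-facing face of $K$ is either a triangle of $w(C)$ (satisfying condition 2) with $\textit{NDepth} = \infty$ and $|V(D)| = 3 \leq |V(C)|$) or an inward-facing face of an inductive skeleton $K_i$ or $K'$, which inherits condition 1) or 2) with the bound $|V(D)| \leq |V(C)|$ propagating through the inequalities $|V(C_i)|, |V(F(C))| \leq |V(C)|$. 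The main obstacle I anticipate is the tight parameter bookkeeping: the bound $k \geq r(|V(C)|-3)$ is just sufficient for the $2$-chord and nonsplit reductions to each consume one unit of $k$ (and in the nonsplit case, one additional unit of $r$), and verifying that the $(k+r)$-triangulation hypothesis and the distance bound on $V(K)$ both descend correctly through every reduction requires careful tracking of the distance-$1$ offset introduced by a $2$-chord's midpoint and by each $F$-step.
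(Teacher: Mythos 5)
Your proposal takes a genuinely different route — a one-step-at-a-time induction on $(r, |V(C)|)$ rather than the paper's minimal counterexample on $|V(\textnormal{Int}(C))|$ with a multi-step peel — and the chord, far-$2$-chord, and $\Delta^{\geq 3}(C) = \varnothing$ cases all go through with your bookkeeping. But the nonsplit case has a genuine gap that I don't think can be patched inside your framework. You recurse on $F(C)$ with parameters $(k-1, r-1)$ and obtain a $(k-1, r-1)$-skeleton $K'$, then set $K := w(C) \cup K'$ and claim $K$ is a $(k,r)$-skeleton of $C$. Look at an inward-facing face $D$ of $K'$: the $(k-1, r-1)$-skeleton definition only promises $\textit{NDepth}(D) \geq r-1$ under condition 2), not $\textit{NDepth}(D) \geq r$, so $K$ fails the defining property of a $(k,r)$-skeleton. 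There is no way to upgrade that inequality, and you can't fix it by recursing with $(k-1, r)$ instead: that would require $k-1 \geq r(|V(F(C))| - 3) = r(|V(C)|-3)$, i.e. $k \geq r(|V(C)|-3)+1$, which is one more than your hypothesis provides. The difficulty is structural: each $F$-step costs you a unit of $k$ without shrinking $|V(C)|$, so the budget $k \geq r(|V(C)|-3)$ is not enough to pay for each $F$-step in isolation.

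The paper's proof sidesteps this by never decrementing $r$. Taking a minimal counterexample (minimizing $|V(\textnormal{Int}(C))|$), one first observes $\textit{NDepth}(C) < r$, else $C$ is its own skeleton; call this $m$. One then peels all $m$ $F$-steps at once, landing on the $G$-split cycle $C^* = F^m(C)$, which (because $C$ is $(k+r)$-triangulated and $w^m(C)$ isn't itself a skeleton) contains a chord or a far $2$-chord of $C^*$ in $\textnormal{Int}(C^*)$. Adding this chord/$2$-chord splits $C^*$ into two inward faces $A_0, A_1$ with $|V(A_i)| < |V(C)|$, and only then does one recurse, with parameters $(k-(m+1), r)$. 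The budget check $k-(m+1) \geq r(|V(A_i)|-3)$ passes because the $r$-fold slack in $k \geq r(|V(C)|-3)$ is spent all at once: $|V(A_i)| \leq |V(C)|-1$ gains you $r$ units of slack, and $m+1 \leq r$ units are consumed. In short, the bound $k \geq r(|V(C)|-3)$ is calibrated for spending up to $r$ units of budget per unit drop of $|V(C)|$, and you only realize a drop in $|V(C)|$ after the whole batch of $F$-steps concludes with a split — so the batching is essential.
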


\begin{proof} Suppose not and let $G,C, k, r$ be chosen to be a counterexample to the proposition which minimizes $|V(\textnormal{Int}(C))|$. By assumption, $k\geq r(|V(C)|-3)$ and $C$ is not $(k,r)$-partitionable. 

\begin{claim}\label{rVCTrivialCasesCl} $r>0$ and $|V(C)|>3$ and furthermore, $\textit{NDepth}(C)\geq r$. \end{claim}

\begin{claimproof} If $|V(C)|=3$ then $\textit{NDepth}(C)=\infty$. In particular, if any of the three statements above do not hold, then $C$  is a $(k,r)$-skeleton of $C$, contradicting our choice of certificate. \end{claimproof}

Let $m:=\textit{NDepth}(C)$ and let $C^*:=F^m(C)$. Now, $C^*$ is a cyclic facial subgraph of $w^m(C)$ and each facial subgraph of $w^m(C)$, except for $C, C^*$, is a triangle. Furthermore, $|V(C)|=|V(C^*)|$.

\begin{claim}\label{Chordor2ChordCL} $\textnormal{Int}(C^*)$ either contains a 2-chord of $C^*$ whose endpoints are of distance at least three apart in $C^*$, or it contains a chord of $C^*$. \end{claim}

\begin{claimproof} If $\textnormal{Int}(C^*))$ contains a chord of $C^*$, then we are done, so suppose that there is no such chord. By assumption, $C^*$ is $G$-split. Since every facial subgraph of $w^m(C)$, except for $C$ and $C^*$, is a triangle, it follows that $\Delta^{\geq 3}(C^*)\neq\varnothing$, or else $w^m(C)$ is a $(k,r)$-skeleton of $C$, contradicting our choice of certificate. By assumption, $C$ is $k+r$-triangulated. Since $\Delta^{\geq 3}(C^*)\neq\varnothing$ and $V(C^*)\subseteq B_m(C)$, it follows from Definition \ref{splitandnonsplitdefinitions} that $\textnormal{Int}(C^*)$ contains a 2-chord of $F^m(C)$ whose endpoints are of distance at least three apart in $C^*$, so we are done. \end{claimproof}

It follows from Claim \ref{Chordor2ChordCL} that there is a 2-connected graph $H$ which is obtained from $C^*$ either by adding a chord of $F^m(C)$ in $\textnormal{Int}(C^*)$, or adding a 2-chord of $C^*$ in $\textnormal{Int}(C^*)$, where the endpoints of thise 2-chord have dsitance at least three apart in $C^*$. In either case, $H$ has precisely two inward-facing facial subgraphs $A_0, A_1$ (i.e the facial subgraphs of $H$ which are distinct from $C^*$). If $H$ differs from $C^*$ by a 2-chord of $C^*$, then the endpoints of this 2-chord are of distance at least three apart in $C^*$. In any case, since $|V(C)|=|V(C^*)|$, we have $\max\{|V(A_0)|, |V(A_1)|\}<|V(C)|$. Now, by Claim \ref{rVCTrivialCasesCl}, $|V(C)|>3$ and $r\geq m+1$. Since $k\geq r(|V(C)-3)|$ it follows that $k-(m+1)\geq 0$. 

\begin{claim}\label{AiSatisfiesIndHyp} For each $i=0,1$, $A_i$ is $(k+r-(m+1))$-triangulated and $k-(m+1)\geq r(|V(A_i)|-3)$. \end{claim}

\begin{claimproof}  Let $i\in\{0,1\}$. We have $V(A_i)\subseteq B_1(F^m(C))\cap V(\textnormal{Int}_G(C^*))$. Since $V(C^*)\subseteq B_m(C, G)$  and $C$ is $(k+r)$-triangulated, it follows that $A_i$ is $((k+r)-(m+1))$-triangulated. Suppose toward a contradiction that $k-(m+1)<r(|V(A_i)|-3)$. Since $|V(A_i)|<|V(C)|$, we have $k-(m+1)<r(|V(C)|-4)$, so $k+r-(m+1)<r(|V(C)|-3)$. Since $m+1\leq r$, we have $r(|V(C)|-3)>k$, contradicting our initial assumption. \end{claimproof}

 For each $i=0,1$, we have $|V(\textnormal{Int}(A_i))|<|V(\textnormal{Int}(C))|$. By our choice of counterexample certificate, it follows from Claim \ref{AiSatisfiesIndHyp} that, for each $i=0,1$, $A_i$ is $(k-(m+1), r)$-partitionable and thus admits a $(k-(m+1), r)$-skeleton $K_i\subseteq\textnormal{Int}(A_i)$. Let $K^*:=w^m(C)\cup K_0\cup K_1$. Note that $K^*\subseteq\textnormal{Int}(C)$ and $C\subseteq w^m(C)\subseteq K^*$. For each inward-facing facial subgraph $D$ of $K^*$, there is an $i\in\{0,1\}$ such that $D\subseteq\textnormal{Int}(A_i)$ and $D$ is an inward-facing facial subgraph of $K_i$. Thus, either $D$ is an induced subgraph of $\textnormal{Int}(D)$ with $\Delta^{2p}(D)=\varnothing$, or we have $\textit{NDepth}(D)\geq r$ and $|V(D)|\leq\max\{|V(A_0)|, |V(A_1)|\}$. In the latter case, we have $|V(D)|\leq |V(C)|$. Furthermore, since $V(K_i)\subseteq B_{k-(m+1)}(A_i)$ for each $i=0,1$, we have $V(K^*)\subseteq B_k(C)$. Thus, $K^*$ is a $(k,r)$-skeleton of $C$, contradicting our choice of counterexample certificate. This proves Proposition \ref{k+rboundpart}. \end{proof}

\section{Dealing with the case of Nonsplit Depth $\geq 2$}\label{NonsplitLensesSec}

To state Proposition \ref{OneStepIntermProp}, we first introduce the following definition.

\begin{defn} \emph{Let $G$ be a planar graph and $C\subseteq G$ be a cycle. Given a list-assignment $L$ for $V(G)$ and a partial $L$-coloring $\psi$ of $V(\textnormal{Ext}_C(G))$, we set $\Delta_{L}(C, \psi):=\{v\in \Delta^{2p}(C): |L_{\psi}(v)|<3\}$.}\end{defn}

\begin{prop}\label{OneStepIntermProp} Let $G$ be a 2-connected short-inseparable, planar graph with outer cycle $C$, where $|V(C)|>3$ and $\textit{NDepth}(C)\geq 2$. Let $L$ be a list-assignment for $G$ and let $\psi$ be a partial $L$-coloring of $V(C)$ such that $V(C)\setminus\textnormal{dom}(\psi)$ is $(L, \psi)$-inert in $G$ and every vertex of $B_2(C)\setminus V(C)$ has an $L$-list of size at least five. Then there exists an extension of $\psi$ to a partial $L$-coloring $\psi^*$ of $V(w^1(C))$ such that the following hold.
\begin{enumerate}[label=\emph{\arabic*)}]
\itemsep-0.1em
\item $|\Delta_L(F^1(C), \psi^*)|\leq\left\lceil\frac{|\Delta_{L}(C, \psi)|}{2}\right\rceil$; AND
\item $V(w^1(C))\setminus\textnormal{dom}(\psi^*)$ is $(L, \psi^*)$-inert in $G$
\end{enumerate}
 \end{prop}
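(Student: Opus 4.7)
The plan is to construct $\psi^*$ by extending $\psi$ to color most of the newly introduced vertices $V(w^1(C))\setminus V(C)$, while strategically leaving a subset of new $2p$-vertices of $C$ uncolored so as to protect many $u\in\Delta^{2p}(F^1(C))$ from landing in $\Delta_L(F^1(C),\psi^*)$. I would use throughout that $F^1(C)$ is $G$-nonsplit (since $\textit{NDepth}(C)\geq 2$) and that the annulus $w^1(C)$ is triangulated away from its two boundary cycles $C$ and $F^1(C)$.

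The key local observation is that any $u\in\Delta^{2p}(F^1(C))$ lies in $B_2(C)\setminus V(C)$, so $|L(u)|\geq 5$, and its three consecutive neighbors $x,y,z$ on $F^1(C)$ are pairwise adjacent along $F^1(C)$; hence if all three of $x,y,z$ are colored by $\psi^*$ they necessarily use three distinct colors and $|L_{\psi^*}(u)|\geq 2$, putting $u$ in danger, whereas leaving any single one of $x,y,z$ uncolored forces $|L_{\psi^*}(u)|\geq 3$ and excludes $u$ from $\Delta_L(F^1(C),\psi^*)$. This reduces the task to choosing an uncolored set $S\subseteq V(F^1(C))\setminus V(C)$ of new vertices so that each $u$ has a neighbor in $S\cup(V(C)\setminus\textnormal{dom}(\psi))$ on $F^1(C)$, apart from a small residual.

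I would show that any new vertex $w\in(\Delta^{2p}(C)\cap V(F^1(C)))\setminus\Delta_L(C,\psi)$ can safely be left uncolored: since $|L_\psi(w)|\geq 3$ and the annulus-side neighbors of $w$ are precisely its three original $C$-neighbors $a,b,c$ (already absorbed into $L_\psi(w)$), the remaining neighbors of $w$ --- its two neighbors on $F^1(C)$ and its $\textnormal{Int}(F^1(C))$-side neighbors --- can consume at most two further colors from $L_\psi(w)$ in any extension, still leaving a color for $w$. Combined with the hypothesized $(L,\psi)$-inertness of $V(C)\setminus\textnormal{dom}(\psi)$, this secures the $(L,\psi^*)$-inertness required by condition (2). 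Accordingly, I take $S$ to include every such safe $w$.

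Let $\mathcal{B}\subseteq\Delta^{2p}(F^1(C))$ be the set of residual bad vertices, i.e.\ those $u$ whose three $F^1(C)$-neighbors all lie in $V(C)\cap\textnormal{dom}(\psi)$ or in $\Delta_L(C,\psi)\cap V(F^1(C))$. Using the $G$-nonsplit structure of $F^1(C)$ (from $\textit{NDepth}(C)\geq 2$) and the short-inseparability of $G$, I would argue that for each bad triple $x,y,z$ at least two of $x,y,z$ must be new $2p$-vertices of $C$, and hence in $\Delta_L(C,\psi)$. A charging scheme assigning each $u\in\mathcal{B}$ a total charge of $2$ distributed over its distinct $\Delta_L(C,\psi)$-neighbors on $F^1(C)$, made disjoint across different bad $u$'s via the cyclic structure of $F^1(C)$, then yields $|\mathcal{B}|\leq\lceil|\Delta_L(C,\psi)|/2\rceil$. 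The main obstacle will be establishing this structural dichotomy for bad triples --- ruling out configurations with too many old $C$-vertices among $x,y,z$ by exhibiting forbidden chords or short separating cycles --- and simultaneously verifying the disjointness of the charges under a careful cyclic ordering on $F^1(C)$.
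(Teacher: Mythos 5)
Your high-level strategy—fix an uncolored set $S\subseteq V(F^1(C))\setminus V(C)$ and then bound the residual bad set $\mathcal{B}$ by a charging argument—does not close as stated, for three concrete reasons. First, the inertness justification for leaving $w\in(\Delta^{2p}(C)\cap V(F^1(C)))\setminus\Delta_L(C,\psi)$ uncolored is wrong: since $F^1(C)$ is $0$-triangulated with no chord in $\textnormal{Int}(F^1(C))$, $w$ sees a fan $x_1y_1\cdots y_{k-1}x_2$ of neighbors on the inner side, of arbitrary length $k$, and a given extension may use more than $|L_{\psi}(w)|$ distinct colors on these vertices; your claim that they "can consume at most two further colors" is false in general. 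This is exactly why the paper restricts the uncolored set to $\textnormal{Mid}^{\downarrow}$ (middle vertices of a $\Delta^{2p}(F^1(C))$-fan, which have exactly three non-$C$ neighbors), and even then it establishes inertness by carefully selecting the colors on the \emph{colored} neighbors rather than by list-size counting alone. Second, the charge of two per bad $u$ is unjustified: every $u\in\Delta^{2p}(F^1(C))$ has $d(u,C)\geq 2$ (otherwise $u$ would have been absorbed during the iterated construction of $F^1(C)$), so the three-arc $x,y,z$ of $u$ lies entirely in $V(F^1(C))\setminus V(C)=\bigl(\Delta^{2p}(C)\cap V(F^1(C))\bigr)\cup U^{=1}\cup U^{=2}$; and by part (1) of Claim~\ref{BtTwoStepClaim} no two vertices of $\Delta^{2p}(C)$ are adjacent, so a configuration with $y\in\Delta_L(C,\psi)$ and $x,z\in U^{=1}\cup U^{=2}$ contributes only one vertex to $\Delta_L(C,\psi)$. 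Your definition of $\mathcal{B}$ also omits exactly this configuration, since $x,z\in U^{=1}\cup U^{=2}$ are colored under your scheme and can still make $u$ bad.

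Third, and most fundamentally, you have discarded the degree of freedom that the paper's $\textnormal{Halve}^{+}$ and $\textnormal{Halve}^{-}$ machinery actually exploits: choosing the colors on $U^{=1}$ and $U^{=2}$ vertices (which retain $L_{\psi}$-lists of size $\geq 4$ and $\geq 3$) so that the three-arc of a given $u$ contributes at most two distinct colors to $L(u)$, or so that an uncolored middle vertex keeps a spare color after its fan is filled in. Your claim that colored $x,y,z$ "necessarily use three distinct colors" is false—$x$ and $z$ are at distance two on $F^1(C)$ and are not joined by a chord in $\textnormal{Int}(F^1(C))$—and exploiting this is precisely how many $u$'s are protected without uncoloring anything. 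A static uncolor-then-charge argument does not reach the $\bigl\lceil|\Delta_L(C,\psi)|/2\bigr\rceil$ bound; the path-by-path inductive color selection in the paper's proof is doing essential, non-replaceable work.
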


\begin{proof} We set $C^*:=F^1(C)$, and $U^{=2}:=\{v\in V(C^*\setminus C): |N(v)\cap V(C)|=2\}$ and $U^{=1}:=\{v\in V(C^*\setminus C): |N(v)\cap V(C)|=1\}$.  For any partial $L_{\psi}$-coloring $\phi$ of $V(C^*\setminus C)$, we set $\textnormal{Tw}(\phi):=\{v\in B_2(C): |L_{\psi\cup\phi}(v)|=2\}$. To avoid clutter and unnecessary repetition of background data, we set $\Delta_L:=\Delta_L(C, \psi)$. Since $\textit{NDepth}(C)\geq 2$, each of $C$ and $C^*$ is 0-triangulated. Since $|V(C^*)|<|V(C)|$, we immediately have the following:

\begin{claim}\label{BtTwoStepClaim} 
\textcolor{white}{aaaaaaaaaaaa}
\begin{enumerate}[label=\arabic*)]
\itemsep-0.1em
\item $\Delta^{2p}(C^*)\subseteq B_2(C)$ and any two vertices of $\Delta^{2p}(C)$ are nonadajcent in $G$; AND
\item For any subpath $Q$ of $C^*\setminus C$, if $|V(Q)|\geq 2$ and both endpoints of $Q$ lie $\Delta^{2p}(C)$, then $V(\mathring{Q})\cap U^{=1}\neq\varnothing$.
\end{enumerate} \end{claim}

We now introduce the following notation. Given a subgraph $H$ of $C^*\setminus C$, we let $\textnormal{Mid}^{\downarrow}(H)$ be the set of $v\in V(H)$ such that, for some $y\in\Delta^{2p}(C^*)$, the 2-path $G[N(y)\cap V(C^*)]$ has midpoint $v$. Furthermore, we let $\textnormal{Halve}^{+}(H)$ denote the set of partial $L_{\psi}$-colorings $\phi$ of $V(H)$ such that t
\begin{enumerate}[label=\emph{\roman*)}]
\itemsep-0.1em
\item $V(H)\setminus\textnormal{dom}(\phi)$ is $(L, \psi\cup\phi)$-inert in $G$ and a subset of $\textnormal{Mid}^{\downarrow}(H)$; AND
\item $|\textnormal{Tw}(\phi)|\leq\left\lceil\frac{|\Delta_L\cap V(H)|}{2}\right\rceil$.
\end{enumerate}

The main ingredient in the proof of Proposition \ref{OneStepIntermProp} is Claim \ref{MainClaimforIntermPropSSF} below. During the proof of Claim \ref{MainClaimforIntermPropSSF}, we often need to partially color and delete the vertices of a subpath of $C^*\setminus C$ in a specified way made precise below.

\begin{defn} \emph{Given a subpath $Q$ of $C^*\setminus C$ with endpoints $p, p'$, we let $\textnormal{Link}(Q)$ denote the set of $L_{\phi}$-colorings $\sigma$ of $\{p\, p'\}\cup (V(\mathring{Q})\setminus\textnormal{Mid}^{\downarrow}(\mathring{Q})$ such that $\textnormal{Mid}^{\downarrow}(\mathring{Q})$ is $(L, \phi\cup\sigma)$-inert in $G$.}
\end{defn}

The following is straightforward to check by induction. 

\begin{claim}\label{LinkColBlackBoxCh1} Let $Q$ be an induced subpath of $C^*\setminus C$, where each internal vertex of $Q$ has an $L$-list of size at least three. Let $p, p'$ be the endpoints of $Q$. Then, for any $c\in L(p)$, there is a $\phi\in\textnormal{Link}(Q)$ with $\phi(p)=c$. Furthermore, if $|V(Q)|\geq 2$ then, for any nonempty sets $A\subseteq L(p)$ and $A'\subseteq L(p')$ with $|A|+|A'|\geq 4$, there is a $\psi\in\textnormal{Link}(Q)$ such that $\psi(p)\in A$ and $\psi(p')\in A'$. \end{claim}

We now prove Claim \ref{MainClaimforIntermPropSSF}.

\begin{claim}\label{MainClaimforIntermPropSSF} Let $Q$ be an induced subpath of $C^*\setminus C$, let $x$ be an endpoint of $Q$, and let $c\in L_{\psi}(x)$. Then there is an element of $\textnormal{Halve}^{+}(Q)$ using $c$ on $x$. \end{claim}

\begin{claimproof} Suppose not and let $Q$ be a vertex-minimal counterexample to the claim. Let $Q:=x_1\cdots x_m$ for some $m\geq 1$, where there is a $c\in L_{\psi}(x_1)$ such that no element of $\textnormal{Halve}^+(Q)$ uses $c$ on $x_1$. 

\vspace*{-8mm}
\begin{addmargin}[2em]{0em}\begin{subclaim}\label{trivcaseleq3subCL} $\Delta_L\cap V(Q-x_1)\neq\varnothing$ and $|V(Q)|>3$ \end{subclaim}

\begin{claimproof} Suppose $\Delta_L\cap V(Q-x_1)=\varnothing$. In that case, by Claim \ref{LinkColBlackBoxCh1}, there is a $\sigma\in\textnormal{Link}(Q,)$ using $c$ on $x_1$, and thus $\textnormal{Tw}(\sigma)=\varnothing$ and $\sigma\in\textnormal{Halve}^+(Q)$, contradicting our assumption on $c$. Thus, $\Delta_L\cap V(Q-x_1)\neq\varnothing$. Now suppose $|V(Q)|\leq 3$. Let $\phi$ be an $L_{\psi}$-coloring of $V(Q)$ using $c$ on $x_1$. Note that $|\textnormal{Tw}(\phi)|\leq 1$ and $\left\lceil\frac{|\Delta_L\cap V(Q)|}{2}\right\rceil\geq 1$, so $\phi\in\textnormal{Halve}^+(Q)$, contradicting our assumption on $c$. \end{claimproof}\end{addmargin}

\vspace*{-8mm}
\begin{addmargin}[2em]{0em}\begin{subclaim}\label{m-1midQSubCL1} $x_{m-1}\in\textnormal{Mid}^{\downarrow}(Q)$, and furthermore, precisely one of $x_{m-1}, x_m$ lies in $\Delta_L$.  \end{subclaim}

\begin{claimproof} Suppose toward a contradiction that $x_{m-1}\not\in\textnormal{Mid}^{\downarrow}(Q)$. Since $|V(Q)|>3$, it follows from the minimality of $Q$ that there is a $\sigma\in\textnormal{Halve}^+(Q-x_m)$ with $\sigma(x_1)=c$. Now, $\sigma$ extends to an $L_{\psi}$-coloring $\sigma^*$ of $\textnormal{dom}(\sigma)\cup\{x_m\}$. Since $x_{m-1}\not\in\textnormal{Mid}^{\downarrow}(Q)$, we have $\textnormal{Tw}(\sigma^*)=\textnormal{Tw}(\sigma)$. Since $\left\lceil\frac{|\Delta_L\cap V(Qx_{m-1})|}{2}\right\rceil\leq\left\lceil\frac{|\Delta_L\cap V(Q)|}{2}\right\rceil$, and each endpoint of $Q$ lies in $\textnormal{dom}(\sigma^*)$, we have $\sigma^*\in\textnormal{Halve}^+(Q)$, contradicting our assumption on $c$. Thus, $x_{m-1}\in\textnormal{Mid}^{\downarrow}(Q)$. Now suppose toward a contradiction that neither $x_{m-1}$ nor $x_m$ lies in $\Delta_L$. Again by the minimality of $Q$, there is a $\tau\in\textnormal{Halve}^+(Qx_{m-2})$ with $\tau(x_1)=c$. By assumption, $|L_{\psi}(x_{m-1})|\geq 3$ and $|L_{\psi}(x_m)|\geq 3$, so there is a $d\in L_{\psi}(x_m)$ such that $|L_{\psi\cup\tau}(x_{m-1})\setminus\{d\}|\geq 2$. Now, $\tau$ extends to an $L_{\psi}$-coloring $\tau^*$ of $\textnormal{dom}(\tau)\cup\{x_m\}$ with $\tau^*(x_m)=d$. Note that $\{x_{m-1}\}$ is $(L, \psi\cup\tau^*)$-inert in $G$ and $x_{m-1}\not\in\textnormal{dom}(\tau^*)$. Thus, $\textnormal{Tw}(\tau^*)=\textnormal{Tw}(\tau)$ and $\tau^*\in\textnormal{Halve}^+(Q)$, contradicting our assumption on $c$. \end{claimproof}\end{addmargin}

\vspace*{-8mm}
\begin{addmargin}[2em]{0em} \begin{subclaim}\label{Ifxm-1NotInDeltaLThenSubCL5} $x_{m-2}\not\in\Delta_L$. Furthermore, either $x_{m-1}\in\Delta_L$ or, for any $x\in U^{=1}\cap V(\mathring{Q})$, at least one internal vertex of $xQ$ lies in $\Delta_L$. \end{subclaim}

\begin{claimproof} Suppose $x_{m-2}\in\Delta_L$. Since $\Delta_L\subseteq\Delta^{2p}(C)$, it follows from 1) of Claim \ref{BtTwoStepClaim} that $x_{m-1}\not\in\Delta_L$ and thus, by Subclaim \ref{m-1midQSubCL1}, $x_m\in\Delta_L$. By 2) of Claim \ref{BtTwoStepClaim}, $x_{m-1}\in U^{=1}$. Since $U^{=1}\cap\textnormal{Mid}^{\downarrow}(Q)=\varnothing$, this contradicts Subclaim \ref{m-1midQSubCL1}. Thus, $x_{m-2}\not\in\Delta_L$. Now suppose Subclaim \ref{Ifxm-1NotInDeltaLThenSubCL5} does not hold. Since $x_{m-1}\not\in\Delta_L$, we have $x_m\in\Delta_L$ by Subclaim \ref{m-1midQSubCL1}. By assumption, there is an $\ell\in\{2, \cdots, m-1\}$, where $x_{\ell}\in U^{=1}$ and no internal vertex of $x_{\ell}Q$ lies in $\Delta_L$. As $x_{\ell}\in U^{=1}$, we have $x_{\ell}\not\in\textnormal{Mid}^{\downarrow}(Q)$. Consider the following cases. 

\textbf{Case 1:} $x_{\ell-1}\not\in\textnormal{Mid}^{\downarrow}(Q)$

By the minimality of $Q$, there is a $\phi\in\textnormal{Halve}^+(Qx_{\ell-1})$ with $\phi(x_1)=c$. It follows from Claim \ref{LinkColBlackBoxCh1} that there is a $\sigma\in\textnormal{Link}(x_{\ell}Q)$ with $\sigma(x_{\ell})\in L_{\psi\cup\phi}(x_{\ell})$.  Since $\sigma\in\textnormal{Link}(x_{\ell}Q)$ and neither $x_{\ell-1}$ nor $x_{\ell}$ lies in $\textnormal{Mid}^{\downarrow}(Q)$, we have $\textnormal{Tw}(\phi\cup\sigma)=\textnormal{Tw}(\phi)$. Thus, $\phi\cup\sigma\in\textnormal{Halve}^+(Q)$, contradicting our assumption on $c$. 

\textbf{Case 2:} $x_{\ell-1}\in\textnormal{Mid}^{\downarrow}(Q)$ 

In this case, $\ell-1>1$. By the minimality of $Q$, there is a $\phi\in\textnormal{Halve}^+(Qx_{\ell-2})$ with $\phi(x_1)=c$.

\textbf{Subcase 2.1:} $x_{\ell-1}\in\Delta_L$

 Since $|L_{\psi\cup\phi}(x_{\ell-1})|\geq 1$, there is a $d\in L_{\psi\cup\phi}(x_{\ell-1})$. Since $|L_{\psi}(x_{\ell})\setminus\{d\}|\geq 3$, it follows from Claim \ref{LinkColBlackBoxCh1} that there is a $\sigma\in\textnormal{Link}(x_{\ell}Q)$ with $\sigma(x_{\ell})\neq d$. Thus, $\phi\cup\sigma$ extends to a proper $L_{\psi}$-coloring $\phi^*$ of $\textnormal{dom}(\phi\cup\sigma)\cup\{x_{\ell-1}\}$ with $\phi^*(x_{\ell-1})=d$. Let $y$ be the unique vertex of $B_2(C)$ such that $G[N(y)\cap V(C^*)=x_{\ell-2}x_{\ell-1}x_{\ell}$. Since $\sigma\in\textnormal{Link}_{L_{\psi}}(x_{\ell}Q, C^*, \textnormal{Int}(C^*))$, we have $\textnormal{Tw}(\phi^*)\subseteq\textnormal{Tw}(\phi)\cup\{y\}$. 

By assumption, we have $|\textnormal{Tw}(\phi)|\leq\frac{|\Delta_L\cap V(Qx_{\ell-2})|}{2}$. We also have $\Delta_L\cap V(x_{\ell-1}Q)=\{x_{\ell-1}, x_m\}$. Since $\left\lceil\frac{|\Delta_L\cap V(Qx_{\ell-2})|+2}{2}\right\rceil=\left\lceil\frac{|\Delta_L\cap V(Qx_{\ell-2})|}{2}\right\rceil+1$ and $|\textnormal{Tw}(\phi^*)|\leq |\textnormal{Tw}(\phi)|+1$, we have $\phi^*\in\textnormal{Halve}^+(Q)$, contradicting our assumption on $c$.

\textbf{Subcase 2.2:} $x_{\ell-1}\not\in\Delta_L$

In this case, we have $|L_{\psi\cup\phi}(x_{\ell-1})|\geq 2$. Since $|L_{\psi}(x_{\ell})|\geq 4$, there is a set two colors $\{a_0, a_1\}\subseteq L_{\psi}(x_{\ell})$ such that, for each $i=0,1$, $|L_{\psi\cup\phi}(x_{\ell-1})\setminus\{a_i\}|\geq 2$. By Claim \ref{LinkColBlackBoxCh1}, since $|L_{\psi}(x_m)|\geq 2$ and $\Delta_L\cap V(x_{\ell+1}Q)=\{x_m\}$, there is a $\tau\in\textnormal{Link}(x_{\ell}Q)$ such that $\tau(x_{\ell})\in\{a_0, a_1\}$. Note that $\{x_{\ell-1}\}$ is $(L, \psi\cup\phi\cup\tau)$-inert in $G$. Now, $x_{\ell-1}$ is uncolored, and since $\tau\in\textnormal{Link}_{\psi}(x_{\ell}Q, C^*, \textnormal{Int}(C^*))$ and $x_{\ell}\not\in\textnormal{Mid}^{\downarrow}(Q)$, we have $\textnormal{Tw}(\phi\cup\tau)=\textnormal{Tw}(\phi)$. Since $\phi\in\textnormal{Halve}^+(Qx_{\ell-2})$, it follows that $\phi\cup\tau\in\textnormal{Halve}^+(Q)$, contradicting our assumption on $c$. \end{claimproof}\end{addmargin}

\vspace*{-8mm}
\begin{addmargin}[2em]{0em} 
\begin{subclaim}\label{ListEllFactSubCL3} For any $\ell\in\{2,\cdots, m-1\}$, at least one of the following holds.
\begin{enumerate}[label=\arabic*)]
\itemsep-0.1em
\item $x_{\ell}\in\Delta_L$; OR
\item $x_{\ell}\in\textnormal{Mid}^{\downarrow}(Q)$ and $x_{\ell+1}\in\Delta_L$; OR
\item Each of $|\Delta_L\cap V(Qx_{\ell})|$ and $|\Delta_L\cap V(x_{\ell}Q)|$ is odd.  
\end{enumerate}
\end{subclaim}

\begin{claimproof}  Suppose there is an $\ell\in\{2, \cdots, m-1\}$ for which none of 1), 2), or 3) holds. We break this into two cases. 

\textbf{Case 1:} $x_{\ell}\not\in\textnormal{Mid}^{\downarrow}(Q)$

Since $\ell<m$, it follows from the minimality of $Q$ that there is a $\phi\in\textnormal{Halve}^+(Qx_{\ell})$ with $\phi(x_1)=c$. Likewise, since $\ell>1$ and $x_{\ell}\in\textnormal{dom}(\phi)$, it follows from the minimality of $Q$ that there is a $\sigma\in\textnormal{Halve}^+(x_{\ell}Q)$ with $\sigma(x_{\ell})=\phi(x_{\ell})$. As $x_{\ell}\not\in\textnormal{Mid}^{\downarrow}(Q)$, we have $\textnormal{Tw}(\phi\cup\sigma)=\textnormal{Tw}(\phi)\cup\textnormal{Tw}(\sigma)$ as a disjoint union. Let $A:=\Delta_L\cap V(Qx_{\ell})$ and let $B:=\Delta_L\cap V(x_{\ell}Q)$. Since $\ell$ does not satisfy 1), $x_{\ell}\not\in\Delta_L$, so we have $\Delta_L\cap V(Q)=A\cup B$ as a disjoint union. Now, $|\textnormal{Tw}(\phi)|\leq\left\lceil\frac{|A|}{2}\right\rceil$ and $|\textnormal{Tw}(\sigma)|\leq\left\lceil\frac{|B|}{2}\right\rceil$. Since $\ell$ does not satisfy 3) of Subclaim \ref{ListEllFactSubCL3}, at least one of $|A|, |B|$ is even. In particular, we have $\left\lceil\frac{|A|+|B|}{2}\right\rceil=\left\lceil\frac{|A|}{2}\right\rceil+\left\lceil\frac{|B|}{2}\right\rceil$. Since $|\Delta_L\cap V(Q)=|A|+|B|$, we have $|\textnormal{Tw}(\phi\cup\sigma)|\leq\left\lceil\frac{|\Delta_L\cap V(Q)|}{2}\right\rceil$, so we get $\phi\cup\sigma\in\textnormal{Halve}^+(Q)$, contradicting our assumption on $c$. 

\textbf{Case 2:} $x_{\ell}\in\textnormal{Mid}^{\downarrow}(Q)$ 

Since $\ell$ does not satisfy does not satisfy 2) of Subclaim \ref{ListEllFactSubCL3}, we have $x_{\ell+1}\not\in\Delta_L$. As $\ell>1$, it follows from the minimality of $Q$, there is a $\phi'\in\textnormal{Halve}^+(Qx_{\ell-1})$ with $\phi'(x_1)=c$. Since $x_{\ell}\not\in\Delta_L$, we have $|L_{\psi\cup\phi'}(x_{\ell})|\geq 2$. Since $x_{\ell+1}\not\in\Delta_L$, we have $|L_{\psi}(x_{\ell+1})|\geq 3$, so there is a $d\in L_{\psi}(x_{\ell+1})$ with $|L_{\psi\cup\phi'}(x_{\ell})\setminus\{d\}|\geq 2$. By the minimality of $Q$, there is a $\sigma'\in\textnormal{Halve}^+(x_{\ell+1}Q)$ with $\sigma'(x_{\ell+1})=d$. As $x_{\ell}\in\textnormal{Mid}^{\downarrow}(Q)$, it follows from our choice of $d$ that $\{x_{\ell}\}$ is $(L, \psi\cup\phi'\cup\sigma')$-inert in $G$. We have $\textnormal{Tw}(\phi'\cup\sigma')=\textnormal{Tw}(\phi')\cup\textnormal{Tw}(\sigma')$ as a disjoint union. Let $A':=\Delta_L\cap V(Qx_{\ell-1})$ and let $B':=\Delta_L\cap V(x_{\ell+1}Q)$. Since 1) is not satisfied, $x_{\ell}\not\in\Delta_L$, so $\Delta_L\cap V(Q)=A'\cup B'$ as a disjoint union. We have $|\textnormal{Tw}(\phi')|\leq\left\lceil\frac{|A'|}{2}\right\rceil$ and $|\textnormal{Tw}(\sigma')|\leq\left\lceil\frac{|B'|}{2}\right\rceil$. As $\ell$ does not satisfy 3) of Subclaim \ref{ListEllFactSubCL3} and $x_{\ell}\not\in\Delta_L$, at least one of $|A'|, |B'|$ is even. Thus, $\left\lceil\frac{|A'|+|B'|}{2}\right\rceil=\left\lceil\frac{|A'|}{2}\right\rceil+\left\lceil\frac{|B'|}{2}\right\rceil$. Since $|\Delta_L\cap V(Q)|=|A'|+|B'|$, we have $|\textnormal{Tw}(\phi'\cup\sigma')|\leq\left\lceil\frac{|\Delta_L\cap V(Q)|}{2}\right\rceil$, so we get $\phi'\cup\sigma'\in\textnormal{Halve}^+(Q)$, contradicting our assumption on $c$. \end{claimproof}\end{addmargin}

We have an analogous fact to Subclaim \ref{m-1midQSubCL1} for the other side.

\vspace*{-8mm}
\begin{addmargin}[2em]{0em}
\begin{subclaim}\label{3FactSubCl2List}
All three of the following hold:
\begin{enumerate}[label=\arabic*)]
\itemsep-0.1em
\item $x_2\in\textnormal{Mid}^{\downarrow}(Q)$; AND
\item Precisely one of $x_2, x_3$ lies in $\Delta_L$; AND
\item $|V(Q)|>5$ and $\Delta_L\cap V(Q-x_1)\neq \{x_2, x_{m-1}\}$
\end{enumerate}  \end{subclaim}

\begin{claimproof} We first show that $x_2\in\textnormal{Mid}^{\downarrow}(Q)$. Suppose not. Since $|L_{psi}(x_2)\setminus\{c\}|\geq 1$, it follows from the minimality of $Q$ that there is a $\phi\in\textnormal{Halve}^+(Q-x_1)$ with $\phi(x_2)\neq c$. Now, $\phi$ extends to an $L_{\psi}$-coloring $\phi^*$ of $\cup\{x_1\}$ with $\phi^*(x_1)=c$. Since $x_2\not\in\textnormal{Mid}^{\downarrow}(Q)$, we have $\textnormal{Tw}(\phi^*)=\textnormal{Tw}(\phi)$, and since $|\textnormal{Tw}(\phi)|\leq\left\lceil\frac{|\Delta_L\cap V(Q-x_1)|}{2}\right\rceil$, we have $|\textnormal{Tw}(\phi^*)|\leq\left\lceil\frac{|\Delta_L\cap V(Q)|}{2}\right\rceil$, so $\phi^*\in\textnormal{Halve}^+(Q)$, contradicting our assumption on $c$. Thus, $x_2\in\textnormal{Mid}^{\downarrow}(Q)$. This proves 1). Now suppose that $|\{x_2, x_3\}\cap\Delta_L|\neq 1$. By 1) of Claim \ref{BtTwoStepClaim}, $\{x_2, x_3\}\cap\Delta_L=\varnothing$, and each of $x_2, x_3$ has an $L_{\psi}$-list of size at least three. Thus, there is a $d\in L_{\psi}(x_3)$ with $|L_{\psi}(x_2)\setminus\{c,d\}|\geq 2$. By minimality, there is a $\sigma\in\textnormal{Halve}^+(x_3Q)$ with $\sigma(x_3)=d$, and $\sigma$ extends to an $L$-coloring $\sigma^*$ of $\{x_1\}\cup\textnormal{dom}(\sigma)$ with $\sigma^*(x_1)=c$. Since $x_2\in\textnormal{Mid}^{\downarrow}(Q)$, it follows that $\{x_2\}$ is $(L, \sigma^*)$-inert in $G$. Furthermore, $\textnormal{Tw}(\sigma^*)=\textnormal{Tw}(\sigma)$, so $\sigma^*\in\textnormal{Halve}^+(Q)$, contradicting our assumption on $c$. 

Now we prove 3). We first show that $|V(Q)|\leq 5$. Suppose not. By Subclaim \ref{m-1midQSubCL1}, $x_{m-1}\in\textnormal{Mid}^{\downarrow}(Q)$. By 1), $x_2\in\textnormal{Mid}^{\downarrow}(Q)$. By Subclaim \ref{trivcaseleq3subCL}, $m-1>2$, and since $x_2, x_{m-1}\in\textnormal{Mid}^{\downarrow}(Q)$, we have $|V(Q)|=5$. Thus, again by Subclaim \ref{m-1midQSubCL1}, $|\{x_4, x_5\}\cap\Delta_L|=1$. As $\Delta_L$ has nonempty intersection with each of $\{x_2, x_3\}$ and $\{x_4, x_5\}$, and since $U^{=1}\cap\textnormal{Mid}^{\downarrow}(Q)=\varnothing$, it follows from 2) of Claim \ref{BtTwoStepClaim} that $x_3\in U^{=1}$. Since $U^{=1}\cap\Delta_L=\varnothing$, we have $x_2\in\Delta_L$. Since $|L_{\psi}(x_2)\setminus\{c\}|\geq 1$, there is a $d\in L_{\psi}(x_2)\setminus\{c\}$. Since $x_3\in U^{=1}$, we have $|L_{\psi}(x_3)\setminus\{d\}|\geq 3$. Consider the following cases. 

\textbf{Case 1:} $x_4\in\Delta_L$

In this case, $\Delta_L\cap V(Q)=\{x_2, x_4\}$. Since $|L_{\psi}(x_5)|\geq 3$ and $|L_{\psi}(x_3)\setminus\{d\}|\geq 3$, there is an $L_{\psi}$-coloring $\phi^*$ of $V(Q)\setminus\{x_4\}$ such that $\phi^*(x_1)=c$ and $\phi^*(x_2)=d$, where neither $\phi(x_3)$ nor $\phi(x_5)$ lies in $L_{\psi}(x_4)$. Thus, $\{x_4\}$ is $(L, \psi\cup\phi^*)$-inert in $G$ and $|\textnormal{Tw}(\phi^*)|\geq 1$. Since $|\Delta_L\cap V(Q)|\geq 2$, we have $\phi^*\in\textnormal{Halve}^+(Q)$, contradicting our assumption on $c$. 

\textbf{Case 2:} $x_4\not\in\Delta_L$

In this case, we have $\Delta_L\cap V(Q)=\{x_2, x_5\}$ and, in particular, $|L_{\psi}(x_4)|\geq 3$. Since $|L_{\psi}(x_3)\setminus\{d\}|\geq 3$ and $|L_{\psi}(x_5)|\geq 2$, there is an $L_{\psi}$-coloring $\phi^*$ of $V(Q)\setminus\{x_4\}$ such that $\phi^*(x_1)=c$ and $\phi^*(x_2)=d$, where $|L_{\psi\cup\phi^*}(x_4)|\geq 2$. Thus, $\{x_4\}$ is $(L, \psi\cup\phi^*)$-inert in $G$ and $|\textnormal{Tw}(\phi^*)|\geq 1$. Since $|\Delta_L\cap V(Q)|\geq 2$, we have $\phi^*\in\textnormal{Halve}^+(Q)$, contradicting our assumption on $c$. 

Thus, we have $|V(Q)|>5$, as desired. To finish, we prove that $\Delta_L\cap V(Q-x_1)\neq \{x_2, x_{m-1}\}$. Suppose not. As above, we fix a $d\in L_{\psi}(x_2)\setminus\{c\}$. Let $A:=L_{\psi}(x_3)\setminus\{d\}$. Since $x_{m-1}\in\Delta_L$, each of $x_{m-2}, x_m$ has an $L_{\psi}$-list of size at least three, so let $f\in L_{\psi}(x_m)\setminus L_{\psi}(x_{m-1})$ and let $A':=L_{\psi}(x_{m-2})\setminus L_{\psi}(x_{m-1})$. Note that $|A'|\geq 1$ and $|A|\geq 2$. Since $|V(Q)|>5$, the path $x_3Qx_{m-2}$ has length at least one. We claim now that there is $\sigma\in\textnormal{Link}(x_3Qx_{m-2})$ with $\sigma(x_3)\in A$ and $\sigma(x_{m-2})\in A'$. If $|A|+|A'|\geq 4$, then this follows from 2i) of Claim \ref{LinkColBlackBoxCh1}, so suppose that $|A|+|A'|\leq 3$. In this case, $|A|=2$ and $|A'|=1$, and thus $|L_{\psi}(x_{m-2})|=3$. Likewise, since $|A|=2$ and $x_3\not\in\Delta_L$, we have $|L_{\psi}(x_3)|=3$. By 2) of Claim \ref{BtTwoStepClaim}, there is a $j\in\{4, \cdots, m-3\}$ with $x_j\in U^{=1}$, since $\Delta_L\cap V(Q-x_1)\neq \{x_2, x_{m-1}\}$. Since $U^{=1}\cap\textnormal{Mid}^{\downarrow}(Q)=\varnothing$ and $|L_{\psi}(x_j)|\geq 4$, it follows from two applications of Claim \ref{LinkColBlackBoxCh1} that such a $\sigma$ exists. In any case, there is a $\sigma\in\textnormal{Link}(x_3Qx_{m-2})$ with $\sigma(x_3)\in A$ and $\sigma(x_{m-2})\in A'$. Now, $\sigma$ extends to an $L_{\psi}$-coloring $\sigma^*$ of $\textnormal{dom}(\sigma)\cup\{x_1, x_2, x_m\}$, where $\sigma^*(x_m)=f$ and $\sigma^*(x_1)=c$. As $\sigma^*(x_{m-2})\in A'$, the set $\{x_{m-1}\}$ is $(L, \psi\cup\sigma^*)$-inert in $G$. Since $x_{m-1}$ is uncolored and $\textnormal{Tw}(\sigma)=\varnothing$, we have $|\textnormal{Tw}(\sigma^*)|\leq 1$, as $x_3, x_{m-2}\not\in\textnormal{Mid}^{\downarrow}(Q)$. Since $|\Delta_L\cap V(Q)|=2$, we have $\sigma^*\in\textnormal{Halve}^+(Q)$, contradicting our assumption on $c$.  \end{claimproof}\end{addmargin}

\vspace*{-8mm}
\begin{addmargin}[2em]{0em}\begin{subclaim}\label{xSubm-1inDeltaLSubCL4} $x_{m-1}\in\Delta_L$. \end{subclaim}

\begin{claimproof} Suppose not. Recall that, by Subclaim \ref{m-1midQSubCL1}, $x_m\in\Delta_L$. By 3) of Subclaim \ref{3FactSubCl2List}, $m>5$. We first show that $\Delta_L\cap V(\mathring{Q})=\varnothing$. Suppose not, and let $k$ be the maximal index among $\{2, \cdots, m-1\}$ such that $x_k\in\Delta_L$. By 2) of Claim \ref{BtTwoStepClaim}, at least one internal vertex of $x_kQ$ lies in $U^{=1}$, so there is an $\ell\in\{k+1, \cdots, m-1\}$ with $x_{\ell}\in U^{=1}$. But then, by Subclaim \ref{Ifxm-1NotInDeltaLThenSubCL5}, at least one internal vertex of $x_{\ell}Q$ lies in $\Delta_L$, contradicting our choice of $k$. We conclude that $\Delta_L\cap V(\mathring{Q})=\varnothing$, as desired. Since $\Delta_L\cap V(\mathring{Q})=\varnothing$, it follows from Claim \ref{LinkColBlackBoxCh1} that there is a $\phi\in\textnormal{Link}(Q-x_m)$ such that $\phi(x_1)=c$. Since $|L_{\psi\cup\phi}(x_m)|\geq 1$, there is an extension of $\phi$ to an $L_{\psi}$-coloring $\phi^*$ of $\textnormal{dom}(\phi)\cup\{x_m\}$. By Subclaim \ref{m-1midQSubCL1}, $x_{m-1}\in\textnormal{Mid}^{\downarrow}(Q)$, and there is a $y\in B_2(C)$ with $G[N(y)\cap V(C^*)]=x_{m-2}x_{m-1}x_m$. Since $\phi\in\textnormal{Link}_{L_{\psi}}(Q-x_m, C^*, \textnormal{Int}(C^*))$, we have $\textnormal{Tw}\subseteq\{y\}$, and since $1\leq |\Delta_L\cap V(Q)|\leq 2$, we have $\left\lceil\frac{\Delta_L\cap V(Q)}{2}\right\rceil=1$, so $\phi^*\in\textnormal{Halve}^+(Q)$, contradicting our assumption on $c$. \end{claimproof}\end{addmargin}

\vspace*{-8mm}
\begin{addmargin}[2em]{0em}
\begin{subclaim}\label{PathFrom2tom-2SubCL4} $V(x_2Qx_{m-2})\cap U^{=1}\neq\varnothing$ and furthermore, $x_{m-3}\in\textnormal{Mid}^{\downarrow}(Q)$ \end{subclaim}

\begin{claimproof} Suppose toward a contradiction that $V(x_2Qx_{m-2})\cap U^{=1}=\varnothing$. By Subclaim \ref{xSubm-1inDeltaLSubCL4}, we have $x_{m-1}\in\Delta_L$, so it follows from 2) of Claim \ref{BtTwoStepClaim} that $\Delta_L\cap V(Qx_{m-3})=\varnothing$. By Subclaim \ref{Ifxm-1NotInDeltaLThenSubCL5}, $x_{m-2}\not\in\Delta_L$, so $\Delta_L\cap V(Qx_{m-2})=\varnothing$ and, since $x_m\not\in\Delta_L$, we have $\Delta_L\cap V(Q)=\{x_{m-1}\}$, so every vertex of $x_2Qx_{m-2}$ has an $L_{\psi}$-list of size at least three. It now follows from Claim \ref{LinkColBlackBoxCh1} that there is a $\phi\in\textnormal{Link}(Qx_{m-2})$ with $\phi(x_1)=c$. Let $\phi^*$ be an extension of $\phi$ to an $L_{\psi}$-coloring of $\textnormal{dom}(\phi)\cup\{x_{m-1}, x_m\}$. We have $x_{m-1}\in\textnormal{Mid}^{\downarrow}(Q)$, but since $x_{m-2}, x_m\not\in\textnormal{Mid}^{\downarrow}(Q)$ and $\textnormal{Tw}(\phi)=\varnothing$, we get $|\textnormal{Tw}(\phi^*)|\leq 1$. Since $\left\lceil\frac{|\Delta_L\cap V(Q)|}{2}\right\rceil=1$, we have $\phi^*\in\textnormal{Halve}^+(Q)$, contradicting our assumption on $c$. We conclude that $V(x_2Qx_{m-2})\cap U^{=1}\neq\varnothing$, as desired.

Now we show that $x_{m-3}\in\textnormal{Mid}^{\downarrow}(Q)$. Since $|L_{\psi}(x_{m-2})|\geq 3$ and $|L_{\psi}(x_m)|\geq 3$, we fix $f\in L_{\psi}(x_{m-2})$ and $f'\in L_{\psi}(x_m)$ with $f, f'\not\in L_{\psi}(x_{m-1})$. We break the remainder of the proof of Subclaim \ref{PathFrom2tom-2SubCL4} into two cases.

\textbf{Case 1:} $x_{m-2}\in U^{=1}$

By the minimality of $Q$, there is a $\sigma\in\textnormal{Halve}^+(Qx_{m-3})$ with $\sigma(x_1)=c$. Since $x_{m-2}\in U^{=1}$, we have $|L_{\psi\cup\sigma}(x_{m-2})|\geq 3$. Since $|L_{\psi}(x_m)|\geq 3$, there is an extension of $\sigma$ to an $L_{\psi}$-coloring $\sigma^*$ of $\textnormal{dom}(\sigma)\cup\{x_{m-2}, x_m\}$ such that $\sigma^*(x_{m-2})=f$ and $\sigma^*(x_m)=f'$. Thus, $\{x_{m-1}\}$ is $(L, \psi\cup\sigma^*)$-inert in $G$. Since $x_{m-3}\not\in\textnormal{Mid}^{\downarrow}(Q)$ by assumption, and $x_{m-1}$ is uncolored, we have $\textnormal{Tw}(\sigma^*)=\textnormal{Tw}(\sigma)$, so $\sigma^*\in\textnormal{Halve}^+(Q)$, contradicting our choice of $c$. 

\textbf{Case 2:} $x_{m-2}\not\in U^{=1}$

Since $V(x_Qx_{m-2})\cap U^{=1}\neq\varnothing$, let $r$ be the maximal index among $\{2, \cdots, m-2\}$ such that $x_r\in U^{=1}$. By the assumption of Case 2, $r<m-2$. Furthermore, since $U^{=1}\cap\Delta_L=\varnothing$ and $U^{=1}\cap\textnormal{Mid}^{\downarrow}(Q)=\varnothing$, we have $x_r\not\in\Delta_L$ and $x_{m-1}\not\in U^{=1}$. Since $x_{m-1}\in\Delta_L$ and $x_m\not\in\Delta_L$, we have $\Delta_L\cap V(x_rQ)=\{x_{m-1}\}$, or else it follows from 2) of Claim \ref{BtTwoStepClaim} that there is an $r'\in\{r+1, \cdots, m-2\}$ with $x_{r'}\in U^{=1}$, contradicting our choice of $r$. We break this into two cases. 

\textbf{Subcase 2.1} $x_{r-1}\not\in\textnormal{Mid}^{\downarrow}(Q)$

By the minimality of $Q$, there is a $\zeta\in\textnormal{Halve}^+(Qx_{r-1})$ with $\zeta(x_1)=c$, and $|L_{\psi\cup\zeta}(x_r)|\geq 3$. By Claim \ref{LinkColBlackBoxCh1}, there is a $\varphi\in\textnormal{Link}(x_rQx_{m-2})$ with $\varphi(x_{m-2})=f$ and $\varphi(x_r)\in L_{\psi\cup\zeta}(x_r)$. Thus, $\zeta\cup\varphi$ is a proper $L_{\psi}$-coloring of its domain and extends to an $L_{\psi}$-coloring $\varphi'$ of $\textnormal{dom}(\zeta\cup\varphi)\cup\{x_r\}$ with $\varphi'(x_r)=f'$. Since $x_{r-1}, x_r\not\in\textnormal{Mid}^{\downarrow}(Q)$ and $x_{m-1}$ is uncolored, we have $\textnormal{Tw}(\varphi')=\textnormal{Tw}(\varphi)$. By our choice of $f, f'$, $\{x_{m-1}\}$ is $(L, \psi\cup\varphi')$-inert in $G$. Thus, $\varphi'\in\textnormal{Halve}^+(Q)$, contradicting our choice of $c$.  

\textbf{Subcase 2.2} $x_{r-1}\in\textnormal{Mid}^{\downarrow}(Q)$

In this case, we have $r-1>1$. By the minimality of $Q$, there is a $\tau\in\textnormal{Halve}^+(Qx_{r-2})$ with $\tau(x_1)=c$. Since $x_{r-1}\in\textnormal{Mid}^{\downarrow}(Q)$, we have $x_{r-2}\not\in\textnormal{Mid}^{\downarrow}(Q)$. We note now that $x_{r-1}\not\in\Delta_L$. Suppose that $x_{r-1}\in\Delta_L$. By 1) of Claim \ref{BtTwoStepClaim}, we have $x_{r-2}\not\in\Delta_L$, and thus $\Delta_L\cap V(Qx_{r-2})=\{x_{r-1}, x_{m-1}\}$. If $r-2>1$, then, since $|\Delta_L\cap V(Qx_{r-2})|$ is even, we contradict Subclaim \ref{ListEllFactSubCL3}, where we take $\ell=r-2$ in the statement of Subclaim \ref{ListEllFactSubCL3}. On the other hand, if $r-2\leq 1$, then, since $x_{r-1}\in\textnormal{Mid}^{\downarrow}(Q)$, we have $r=3$ and $\Delta_L\cap V(Q-x_1)=\{x_{r-1}, x_{m-1}\}$, contradicting 3) of Subclaim \ref{3FactSubCl2List}. We conclude that $x_{r-1}\not\in\Delta_L$, so we get $\Delta_L\cap V(x_{r-1}Q)=\{x_{m-1}\}$. 

Since $|L_{\psi}(x_r)|\geq 4$, there exist two colors $\{a_0, a_1\}\subseteq L_{\psi}(x_r)\setminus L_{\psi}(x_{r-1})$ such that, for each $i=0,1$, $|L_{\psi\cup\tau}(x_{r-1})\setminus\{a_i\}|\geq 2$. Now, by Claim \ref{LinkColBlackBoxCh1}, there is a $\varphi\in\textnormal{Link}(x_rQx_{m-3})$ with $\varphi(x_r)\in\{a_0, a_1\}$ and $\varphi(x_{m-3})\in L_{\psi}(x_{m-3})\setminus\{f\}$. Thus, $\tau\cup\varphi$ is a proper $L_{\psi}$-coloring of its domain and extends to an $L_{\psi}$-coloring $\varphi^{\dagger}$ of $\textnormal{dom}(\tau\cup\varphi)\cup\{x_{m-2}, x_m\}$ such that $\varphi^{\dagger}(x_{m-2})=f$ and $\varphi^{\dagger}(x_m)=f'$. By our choice of $f,f'$ and $\varphi(x_r)$, we get that $\{x_{r-1}, x_{m-1}\}$ is $(L, \psi\cup\varphi^{\dagger})$-inert in $G$. By assumption, $x_{m-3}\not\in\textnormal{Mid}^{\downarrow}(Q)$ and $x_{m-2}, x_r\not\in\textnormal{Mid}^{\downarrow}(Q)$ as well. Since $x_{r-1}, x_{m-1}$ are uncolored and $\varphi\in\textnormal{Link}(x_rQx_{m-3})$ it follows that $\textnormal{Tw}(\varphi^{\dagger})=\textnormal{Tw}(\tau)$, so $\varphi^{\dagger}\in\textnormal{Halve}^+(Q)$, contradicting our choice of $c$. This completes the proof of Subclaim \ref{PathFrom2tom-2SubCL4}. \end{claimproof}\end{addmargin}

Combining Subclaims \ref{xSubm-1inDeltaLSubCL4} and \ref{PathFrom2tom-2SubCL4}, we get that $x_{m-1}, x_{m-3}\in\textnormal{Mid}^{\downarrow}(Q)$, so we have $|V(Q)|\geq 5$.  Furthermore, since $x_{m-1}, x_{m-3}\in\Delta_L$, we have $x_{m-1}, x_{m-3}\in\Delta^{2p}(C)$. In particular, by 2) of Claim \ref{BtTwoStepClaim}, $x_{m-2}\in U^{=1}$ and $|L_{\psi}(x_{m-2})|\geq 4$. By 1) of Claim \ref{BtTwoStepClaim}, we have $\Delta_L\cap V(x_{m-3}Q)=\{x_{m-3}, x_{m-1}\}$. Since $x_{m-3}\in\Delta_L$, we have $x_{m-4}\not\in\Delta_L$ by 1) of Claim \ref{BtTwoStepClaim}, and, in particular, $\Delta_L\cap V(x_{m-4}Q)=\{x_{m-3}, x_{m-1}\}$. Since $x_{m-3}\in\textnormal{Mid}^{\downarrow}(Q)$, we have $x_{m-4}\not\in\textnormal{Mid}(Q)$. By 3) of Subclaim \ref{3FactSubCl2List}, $|V(Q)|>5$, so $m-4>1$, and thus we contradict Subclaim \ref{ListEllFactSubCL3}, where we take $\ell=m-4$ in the statement of Subclaim \ref{ListEllFactSubCL3}. This completes the proof of Claim \ref{MainClaimforIntermPropSSF}. \end{claimproof}

Analogous to the notation $\textnormal{Halve}^+(Q)$, we now introduce the following natural notation, where the ceilings have been replaced by floors. Given a subpath $Q$ of $C^*\setminus C$, let $\textnormal{Halve}^{-}(Q)$ denote the set of partial $L_{\psi}$-colorings $\phi$ of $V(Q)$ such that the following hold.
\begin{enumerate}[label=\arabic*)]
\itemsep-0.1em
\item\emph{$V(Q)\setminus\textnormal{dom}(\phi)$ is $(L, \psi\cup\phi)$-inert in $G$ and a subset of $\textnormal{Mid}^{\downarrow}(Q)$}; AND
\item $|\textnormal{Tw}(\phi)|\leq\left\lfloor\frac{|\Delta_L\cap V(Q)|}{2}\right\rfloor$.
\end{enumerate}

\begin{claim}\label{HalveMinusNonEmptyCL} For any induced subpath $Q$ of $C^*\setminus C$, we have $\textnormal{Halve}^-(Q)\neq\varnothing$.  \end{claim}

\begin{claimproof} Suppose not and let $Q:=x_1\cdots x_m$ be a vertex-minimal counterexample to the claim. 

\vspace*{-8mm}
\begin{addmargin}[2em]{0em} 
\begin{subclaim}\label{TrivialEdgeCaseSubCL3} $|V(Q)|\geq 3$ and $x_{m-1}\in \textnormal{Mid}^{\downarrow}(Q)$. \end{subclaim}

\begin{claimproof} If $|V(Q)|\leq 2$ then any $L_{\psi}$-coloring of $Q$ lies in $\textnormal{Halve}^-(Q)$, contradicting our assumption, so $|V(Q)|\geq 3$. Suppose toward a contradiction that $x_{m-1}\not\in\textnormal{Mid}^{\downarrow}(Q)$. By the minimality of $Q$, there is a $\phi\in\textnormal{Halve}^-(Q)$. Since $Q$ is an induced subgraph of $G$, we have $|L_{\psi\cup\phi}(x_m)|\geq 1$, so $\phi$ extends to an $L_{\psi}$-coloring $\phi^*$ of $\textnormal{dom}(\phi)\cup\{x_m\}$. Since $x_{m-1}\not\in\textnormal{Mid}^{\downarrow}(Q)$, we have $\textnormal{Tw}(\phi^*)=\textnormal{Tw}(\phi)$, so $\phi^*\in\textnormal{Halve}^-(Q)$, contradicting our assumption.  \end{claimproof}\end{addmargin}

\vspace*{-8mm}
\begin{addmargin}[2em]{0em}
\begin{subclaim}\label{DeltaLInCAtMost1SubCL} $\Delta_L\cap\{x_{m-1}, x_m\}=\{x_m\}$.  \end{subclaim}

\begin{claimproof} We first show that $x_{m-1}\not\in\Delta_L$. Suppose toward a contradiction that $x_{m-1}\in\Delta_L$. By 1) of Claim \ref{BtTwoStepClaim}, $x_{m-2}, x_m\not\in\Delta_L$, so let $c\in$ and $d\in$, where $c,d\not\in L_{\psi}(x_{m-1})$. By Claim \ref{MainClaimforIntermPropSSF}, there is a $\phi\in\textnormal{Halve}^+(Qx_{m-2})$ with $\phi(x_{m-2})=c$, so let $\phi^*$ be an extension of $\phi$ to an $L_{\psi}$-coloring of $\textnormal{dom}(\phi)\cup\{x_m\}$, where $\phi^*(x_m)=d$. 

By Subclaim \ref{TrivialEdgeCaseSubCL3}, $x_{m-1}\in\textnormal{Mid}^{\downarrow}(Q)$, so $\{x_{m-1}\}$ is $(L, \psi\cup\phi^*)$-inert in $G$ by our choice of $c,d$. Since $x_{m-1}\in\Delta_L$ and $x_m\not\in\Delta_L$, we have $|\Delta_L\cap V(Qx_{m-2})|=|\Delta_L\cap V(Q)|-1$. Since $\phi\in\textnormal{Halve}^+(Qx_{m-2})$ and $x_{m-1}$ is uncolored, we have $|\textnormal{Tw}(\phi^*)|\leq\left\lceil\frac{|\Delta_L\cap V(Q)|-1}{2}\right\rceil$. Note that $\left\lceil\frac{|\Delta_L\cap V(Q)|-1}{2}\right\rceil=\left\lfloor\frac{|\Delta_L\cap V(Q)|}{2}\right\rfloor$, so $\phi^*\in\textnormal{Halve}^-(Q)$, contradicting our assumption. We conclude that $x_{m-1}\not\in\Delta_L$. 

Now suppose toward a contradiction that $x_m\not\in\Delta_L$. As shown above, $x_{m-1}\not\in\Delta_L$ as well, so each of $x_{m-1}, x_m$ has an $L_{\psi}$-list of size at least three. By the minimality of $Q$, there is a $\sigma\in\textnormal{Halve}^-(Qx_{m-2})$. Since $Q$ is an induced subgraph of $G$ and each of $x_{m-1}, x_m$ has an $L_{\psi}$-list of size at least three, it follows that $\sigma$ extends to an $L_{\psi}$-coloring $\sigma^*$ of $\textnormal{dom}(\sigma)\cup\{x_m\}$ such that $|L_{\psi\cup\sigma^*}(x_{m-1})|\geq 2$. Thus, $\{x_{m-1}\}$ is $(L, \psi\cup\sigma^*)$-inert in $G$ and $\textnormal{Tw}(\sigma^*)=\textnormal{Tw}(\sigma)$. It follows that $\sigma^*\in\textnormal{Halve}^-(Q)$, contradicting our assumption.  \end{claimproof}\end{addmargin}

Now we have enough to finish the proof of Claim \ref{HalveMinusNonEmptyCL}. By Subclaim \ref{DeltaLInCAtMost1SubCL}, $|L_{\psi}(x_m)|=2$ and $|L_{\psi}(x_{m-1})|\geq 3$. Since $|L_{\psi}(x_{m-2})|\geq 2$, there is a $c\in L_{\psi}(x_{m-2})$ such that $L_{\psi}(x_{m-1})\setminus\{c\}\neq L_{\psi}(x_m)$. By Claim \ref{MainClaimforIntermPropSSF}, there is a $\sigma\in\textnormal{Halve}^+(Qx_{m-2})$ with $\sigma(x_{m-2})=c$. Since $L_{\psi}(x_{m-1})\setminus\{c\}\neq L_{\psi}(x_m)$, there is an extension of $\sigma$ to an $L_{\psi}$-coloring $\sigma^*$ of $\textnormal{dom}(\sigma)\cup\{x_m\}$ such that $|L_{\psi\cup\sigma^*}(x_{m-1})|\geq 2$. Since $x_{m-1}\in\textnormal{Mid}^{\downarrow}(Q)$, $\{x_{m-1}\}$ is $(L, \psi\cup\sigma^*)$-inert in $G$. Furthermore, $|\Delta_L\cap V(Qx_{m-2})|=|\Delta_L\cap V(Q)|-1$, since $x_m\in\Delta_L$. 

Since $x_{m-1}$ is unolored, we have $\textnormal{Tw}(\sigma^*)=\textnormal{Tw}(\sigma)$. Finally, we have $|\textnormal{Tw}(\sigma)|\leq\left\lceil\frac{|\Delta_L\cap V(Q)|-1}{2}\right\rceil$. Since $\left\lceil\frac{|\Delta_L\cap V(Q)|-1}{2}\right\rceil=\left\lfloor\frac{|\Delta_L\cap V(Q)|}{2}\right\rfloor$, we have $\sigma^*\in\textnormal{Halve}^-(Q)$, contradicting our assumption. This completes the proof of Claim \ref{HalveMinusNonEmptyCL}. \end{claimproof}

With Claims \ref{MainClaimforIntermPropSSF} and \ref{HalveMinusNonEmptyCL} in hand, we are almost ready to complete the proof of Proposition \ref{OneStepIntermProp}. We have the following observation, which immediately follows from the fact that $\textit{NDepth}(C)\geq 2$. 
'
\begin{claim}\label{IfCC*nonemptyIntersecCaseCL}
Both of the following hold:
\begin{enumerate}[label=\arabic*)]
\itemsep-0.1em
\item If $C^*\cap C\neq\varnothing$, then each connected component of $C^*\setminus C$ is a path, and furthermore, for any two distinct connected components $P, P'$ of $C^*\setminus C$, we have $d_G(P, P')\geq 3$; AND
\item  If $C^*\cap C=\varnothing$ then $C^*=G[B_1(C)]$.
\end{enumerate}
  \end{claim}

We now introduce one final definition. A partial $L_{\psi}$-coloring $\sigma$ of $V(C^*\setminus C)$ is called a \emph{good coloring} if $|\Delta_L(C^*, \psi\cup\sigma)|\leq\left\lceil\frac{|\Delta_{L}}{2}\right\rceil$ and $V(C\cup C^*)\setminus\textnormal{dom}(\psi\cup\sigma)$ is $(L, \psi\cup\sigma)$-inert in $G$. To prove Proposition \ref{OneStepIntermProp}, it suffices to prove that there is a good coloring. Suppose now toward a contradiction that there is no good coloring.

\begin{claim} $C^*\cap C=\varnothing$. \end{claim}

\begin{claimproof} Suppose toward a contradiction that $C^*\cap C\neq\varnothing$. Since $\textit{NDepth}(C)\geq 2$, it follows from Definition \ref{splitandnonsplitdefinitions} that $\Delta^{2p}(C)\neq\varnothing$, so, in particular, we have $C^*\neq C$, and $C^*\setminus C$ is nonempty. By Claim \ref{IfCC*nonemptyIntersecCaseCL}, each connected component of $C^*\setminus C$ is an induced path in $G$. Let $P_1, \cdots, P_n$ be the connected components of $C^*\setminus C$. It follows from Claim \ref{HalveMinusNonEmptyCL} that, for each $i=1,\cdots, n$, there is a $\sigma_i\in\textnormal{Halve}^-(Q)$. Let $\sigma$ be the union $\sigma_1\cdots\sigma_n$. Again by Claim \ref{IfCC*nonemptyIntersecCaseCL}, $d(P, P')\geq 3$ for any two distinct connected components $P, P'$ of $C^*\setminus C$, so $\sigma$ is a proper $L_{\psi}$-coloring of its domain and $\textnormal{Tw}(\sigma)=\textnormal{Tw}(\sigma_1)\cup\cdots\cup\textnormal{Tw}(\sigma_n)$. Thus, we have
$$|\textnormal{Tw}(\sigma)|\leq\sum_{i=1}^n\left\lfloor\frac{|\Delta_L\cap V(P_i)|}{2}\right\rfloor\leq\left\lfloor\sum_{i=1}^n\frac{|\Delta_L\cap V(P_i)|}{2}\right\rfloor$$
Since $d(P, P')\geq 3$ for any two distinct connected components $P, P'$ of $C^*\setminus C$, we have the disjoint union $\Delta_L=\bigcup_{i=1}^n (\Delta_L\cap V(P_i))$, so we get $\textnormal{Tw}(\sigma)|\leq\left\lfloor\frac{|\Delta_L(C, \psi)|}{2}\right\rfloor$. Since $\textnormal{Tw}(\sigma)=\Delta_L(C^*, \psi\cup\sigma)$, and $V(C\cup C^*)\setminus\textnormal{dom}(\psi\cup\sigma)$ is $(L, \psi\cup\sigma)$-inert in $G$, it follows that $\sigma$ is a good coloring, contradicting our assumption. \end{claimproof}

By Claim \ref{IfCC*nonemptyIntersecCaseCL}, since $C^*\cap C=\varnothing$, we get that $C^*$ is the subgraph of $G$ induced by the set of vertices of distance precisely one from $C$. Furthermore, since $\Delta^{2p}(C)\neq\varnothing$, we have $U^{=1}\neq\varnothing$ as well, or else $|V(C^*)|<|V(C)|$, which is false. Note that since $\textit{Ndepth}(C)\geq 2$ and $G$ is short-inseparable, we have $|V(C^*)|>4$, or else $C^*$ is a 4-cycle which separates$ C$ from an element of $\Delta^{2p}(C^*)$. 

\begin{claim}\label{xinU1nbrsCL} For any $x\in U^{=1}$, neither neighbor of $x$ in $C^*$ lies in $\textnormal{Mid}^{\downarrow}(C^*)\setminus\Delta_L$. \end{claim}

\begin{claimproof} Suppose not. Thus, there is an edge $xx'\in E(C^*)$, where $x\in U^{=1}$ and $x'\in\textnormal{Mid}^{\downarrow}(C^*)\setminus\Delta_L$. Note that, since $\textit{NDepth}(C)\geq 2$, it follows that $C^*-x'$ is an induced subpath of $G$. Since $|L_{\psi}(x)|\geq 4$ and $x'\not\in\Delta_L$, there is a $c\in L_{\psi}(x)$ with $|L_{\psi}(x')\setminus\{c\}|\geq 3$.  By Claim \ref{MainClaimforIntermPropSSF}, there is a $\sigma\in\textnormal{Halve}^+(C^*-x')$ with $\sigma(x)=c$. Thus, $\{x'\}$ is $(L, \psi\cup\sigma)$-inert in $G$ by our choice of $c$, and $\sigma$ is a good coloring, contradicting our assumption. \end{claimproof}

\begin{claim}\label{xinUeq1BothNbrsCL} For any $x\in U^{=1}$, both neighbors of $x$ in $C^*$ lie in $\textnormal{Mid}^{\downarrow}(C^*)\cap\Delta_L$. \end{claim}

\begin{claimproof} Suppose not. Thus, there is a subpath $x_1x_2x_3$ of $C^*$ with $x_2\in U^{=1}$ and $x_1\not\in\textnormal{Mid}^{\downarrow}(C^*)\cap\Delta_L$. By Claim \ref{xinU1nbrsCL}, we have $x_1\not\in\textnormal{Mid}^{\downarrow}(C^*)$. Consider the following cases. 

\textbf{Case 1:} $x_3\not\in\textnormal{Mid}^{\downarrow}(C^*)$

By Claim \ref{HalveMinusNonEmptyCL}, since $C^*-x_2$ is a path and an induced subgraph of $G$, there is a $\sigma\in\textnormal{Halve}^-(C^*-x_2)$. Since $x_2\in U^{=1}$, we have $|L_{\psi\cup\sigma}(x_2)|\geq 2$, so $\sigma$ extends to an $L_{\psi}$-coloring $\sigma^*$ of $\textnormal{dom}(\sigma)\cup\{x_2\}$. Since $x_1, x_3\not\in\textnormal{Mid}^{\downarrow}(C^*)$ and $U^{=1}\cap\textnormal{Mid}^{\downarrow}(C^*)=\varnothing$, we have $\textnormal{Tw}(\sigma^*)=\textnormal{Tw}(\sigma)$, so $\sigma^*$ is a good coloring, contradicting our assumption. 

\textbf{Case 2:} $x_3\in\textnormal{Mid}^{\downarrow}(C^*)$

In this case, by Claim \ref{xinU1nbrsCL}, we have $x_3\in\Delta_L$. Let $z$ be the unique neighbor of $x_3$ on the path $C^*-x_2$. Since $x_3\in\Delta_L$, we have $z\not\in\Delta_L$, so there is a $c\in L_{\psi}(z)\setminus L_{\psi}(x_3)$. Let $Q:=C^*\setminus\{x_2, x_3\}$. By Claim \ref{MainClaimforIntermPropSSF}, there is a $\tau\in\textnormal{Halve}^-(Q)$ with $\tau(z)=c$.  Note that $|L_{\psi\cup\tau}(x_2)|\geq 3$, so $\tau$ extends to an $L_{\psi}$-coloring $\tau^*$ of $\textnormal{dom}(\tau)\cup\{x_2\}$ such that $\tau^*(x_2)\not\in L_{\psi}(x_3)$. Thus, by our choice of $c$, $\{x_3\}$ is $(L, \psi\cup\tau^*)$-inert. Since $x_3$ is uncolored and $x_1, x_2\not\in\textnormal{Mid}^{\downarrow}(C^*)$, we have $\textnormal{Tw}(\tau^*)=\textnormal{Tw}(\tau)$. Since $\tau\in\textnormal{Halve}^+(Q)$ and $|\Delta_L|=|\Delta_L\cap V(Q)|-1$, it follows that $\tau^*$ is a good coloring, contradicting our assumption.  \end{claimproof}

Now we have enough to finish the proof of Proposition \ref{OneStepIntermProp}. Since $U^{=1}\neq\varnothing$, let $x_1x_2x_3$ be a subpath of $C^*$ of length two, where $x_2\in U^{=1}$. By Claim \ref{xinUeq1BothNbrsCL}, $x_1, x_3\in\textnormal{Mid}\cap\Delta_L$. Let $y$ be the unique neighbor of $x_1$ on the path $C^*-x_2$ and let $y'$ be the unique neighbor of $x_3$ on the path $C^*-x_2$. Recall that $|V(C^*)|\geq 5$, so $y, y'\not\in\{x_1, x_2, x_3\}$ and $y\neq y'$. Let $Q:=C^*\setminus\{x_1, x_2, x_3\}$. Since $y\not\in\Delta_L$, there is a $c\in$ such that $c\not\in L_{\psi}(x_1)$. By Claim \ref{MainClaimforIntermPropSSF}, since $Q$ is an induced subgraph of $G$, there is a $\phi\in\textnormal{Halve}^+(Q)$ with $\phi(y)=c$. Now, we have $|L_{\psi\cup\phi}(x_3)|\geq 1$, and since $x_2\in U^{=1}$, we have $|L_{\psi}(x_2)\setminus L_{\psi}(x_1)|\geq 2$, so $\phi$ extends to an $L_{\psi}$-coloring $\phi^*$ of $\textnormal{dom}(\phi)\cup\{x_2, x_3\}$ such that $\phi^*(x_2)\not\in L_{\psi}(x_1)$. By our choice of $c, \phi^*(x_2)$, we get that $\{x_1\}$ is $(L, \psi\cup\phi^*)$-inert in $G$. Since $x_3\in$ and $x_1$ is uncolored, we have $|\textnormal{Tw}(\phi^*)|\leq |\textnormal{Tw}(\phi)|+1$. Since $|\Delta_L\cap V(Q)|=|\Delta_L|-2$ and $\phi\in\textnormal{Halve}^+(Q)$, we have $|\textnormal{Tw}(\phi^*)|\leq\left\lceil\frac{|\Delta_L|}{2}\right\rceil$, so $\phi^*$ is a good coloring, contradicting our assumption. This completes the proof of Proposition \ref{OneStepIntermProp}. \end{proof}

\section{Completing the proof of Theorem \ref{Main4CycleAnnulusThm}}\label{ThmCutIntoCompWithThomFacesSec}

Recalling the notation introduced in Observation \ref{trianglesplitobs}, we get the following consequence of Proposition \ref{OneStepIntermProp}.

\begin{lemma}\label{RepUseCorMainSSF1} Let $G$ be a 2-connected planar graph with outer cycle $C$, where $|V(C)|>3$. Let $L$ be a list-assignment for $V(G)$ and let $\psi$ be an $L$-coloring of $V(C)$.  Let $r:=\lceil\log_2(|V(C)|)\rceil+2$ and suppose further that $\textit{NDepth}(C)\geq r$ and every vertex of $B_{r}(C)\setminus V(C)$ has an $L$-list of size at least five. Then there is an extension of $\psi$ to an $L$-coloring $\psi^*$ of $w^{r-1}(C)$ such that $V(w^{r-1}(C))$ is $(L, \psi^*)$-inert in $G$ and the outer face of $G\setminus V(w^{r-1}(C))$ is a Thomassen facial subgraph of $G\setminus V(w^{r-1}(C))$ with respect to $L_{\psi^*}$. \end{lemma}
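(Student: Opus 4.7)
The strategy is to apply Proposition \ref{OneStepIntermProp} iteratively $r-1$ times, each time extending the partial coloring one layer inward from $C$ and halving (with ceiling) the number of vertices with small reduced lists. Set $\psi_0 := \psi$, an $L$-coloring of $V(w^0(C)) = V(C)$, for which the uncolored complement is empty and trivially inert. Then for each $i = 1, \ldots, r-1$, I inductively define a partial $L$-coloring $\psi_i$ of $V(w^i(C))$ extending $\psi_{i-1}$ by applying Proposition \ref{OneStepIntermProp} to $G_{i-1} := \textnormal{Int}_G(F^{i-1}(C))$ with outer cycle $F^{i-1}(C)$ and base coloring $\psi_{i-1}|_{V(F^{i-1}(C))}$. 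The hypotheses are checked as follows: $|V(F^{i-1}(C))| = |V(C)| > 3$ by Observation \ref{inducprecollen} and the nonsplitness of each $F^j(C)$ for $j < r$; the bound $\textit{NDepth}_{G_{i-1}}(F^{i-1}(C)) \geq r - (i-1) \geq 2$ follows from $\textit{NDepth}_G(C) \geq r$ together with the nested structure of the $F^j(C)$; the required inert-ness of $V(F^{i-1}(C)) \setminus \textnormal{dom}(\psi_{i-1})$ in $G_{i-1}$ is inherited by a restriction argument from the inert-ness of $V(w^{i-1}(C)) \setminus \textnormal{dom}(\psi_{i-1})$ in $G$; and the list condition holds because $B_2(F^{i-1}(C), G_{i-1}) \setminus V(F^{i-1}(C)) \subseteq B_{i+1}(C, G) \setminus V(C) \subseteq B_r(C, G) \setminus V(C)$. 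I then let $\psi_i := \psi_{i-1} \cup \psi_i^{\text{new}}$, where $\psi_i^{\text{new}}$ is the output of Proposition \ref{OneStepIntermProp}; the two pieces are consistent since their domains only overlap on $V(F^{i-1}(C))$, and inert-ness in $G$ is transferred by composing extensions (restricting candidate extensions to $G_{i-1}$, applying the local inert-ness, then invoking the inductively preserved global inert-ness).

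Using the halving guarantee $|\Delta_L(F^i(C), \psi_i)| \leq \lceil |\Delta_L(F^{i-1}(C), \psi_{i-1})|/2 \rceil$ and the initial bound $|\Delta_L(C, \psi_0)| \leq |V(C)|$, after $r-1$ iterations one obtains $|\Delta_L(F^{r-1}(C), \psi_{r-1})| \leq \lceil |V(C)|/2^{r-1} \rceil \leq 1$ by the choice $r = \lceil \log_2|V(C)| \rceil + 2$. Set $\psi^* := \psi_{r-1}$.

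To finish, I verify that the outer face of $G \setminus V(w^{r-1}(C))$ is a Thomassen facial subgraph with respect to $L_{\psi^*}$. Since $\textit{NDepth}_G(C) \geq r$, the cycle $F^{r-1}(C)$ is $G$-nonsplit, hence $0$-triangulated, and an inspection of the nonsplit conditions shows $\Delta^{\geq 3}(F^{r-1}(C)) = \Delta^{2p}(F^{r-1}(C))$; consequently the outer face is bounded by vertices in the interior of $F^{r-1}(C)$ adjacent to $V(F^{r-1}(C))$. Any such vertex $w$ with at most two neighbors in $F^{r-1}(C)$ satisfies $|L_{\psi^*}(w)| \geq 5 - 2 = 3$; a vertex with exactly three (necessarily consecutive) neighbors lies in $\Delta^{2p}(F^{r-1}(C))$ and has $|L_{\psi^*}(w)| \geq 2$, but at most one such vertex is defective (i.e.\ has list size below $3$). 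The Thomassen edge is then chosen as either an arbitrary outer-face edge (if no defective vertex exists) or an edge $vy$ between the unique defective vertex $v$ and one of its outer-face neighbors $y$; in the latter case $|L_{\psi^*}(v)| + |L_{\psi^*}(y)| \geq 2 + 3 = 5$ so $vy$ is $L_{\psi^*}$-colorable, and every other outer-face vertex has an $L_{\psi^*}$-list of size at least three.

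The main obstacle will be the careful bookkeeping of the inert-ness condition across iterations, since Proposition \ref{OneStepIntermProp} provides inert-ness only in the local subgraph $G_{i-1}$ whereas the lemma demands inert-ness in the global graph $G$; this forces one to decompose any candidate extension along the nested sequence $G \supseteq G_0 \supseteq G_1 \supseteq \cdots \supseteq G_{r-1}$ and to compose the local inert-ness guarantees outward. A secondary subtlety is verifying that $\Delta^{\geq 3}(F^{r-1}(C)) = \Delta^{2p}(F^{r-1}(C))$ under the nonsplit hypothesis, which is what ensures no ``hidden'' defective vertices outside $\Delta^{2p}$ can spoil the Thomassen condition on the outer face.
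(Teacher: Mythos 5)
Your proposal is correct and follows essentially the same approach as the paper's proof: set $\psi_0 := \psi$, iteratively apply Proposition \ref{OneStepIntermProp} $r-1$ times to the nested interiors $\textnormal{Int}(F^{i-1}(C))$ to obtain partial colorings $\psi_1, \ldots, \psi_{r-1}$, use the halving guarantee to conclude $|\Delta_L(F^{r-1}(C), \psi_{r-1})| \leq 1$, and then read off the Thomassen condition on the outer face of $G \setminus V(w^{r-1}(C))$. Your write-up is actually more careful than the paper's terse three-sentence argument (you correctly use $\textit{NDepth}(C) \geq r$ to get $\textit{NDepth}(F^{i-1}(C)) \geq 2$ for $i \leq r-1$, whereas the paper writes "$\textit{NDepth}(C) \geq r+2$", an apparent typo, and you spell out the inert-ness transfer and the $\Delta^{\geq 3} = \Delta^{2p}$ observation which the paper leaves implicit in "$w^{r-1}(C)$ is $0$-triangulated").
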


\begin{proof} For each $k=0, 1,\cdots, r-1$, let $C^*_k:=F^k(C)$. Note that, for each $k=0, \cdots, r-1$, we have $\textit{NDepth}(C^*_k)\geq 2$, since $\textit{NDepth}(C)\geq r+2$. We trivially get that $V(C)\setminus\textnormal{dom}(\psi)$ is $(L, \psi)$-inert in $G$, since $\textnormal{dom}(\psi)=V(C)$. Thus, applying Proposition \ref{OneStepIntermProp}, we get that, for each $k=0, 1,\cdots, r-1$ there is a partial $L_{\psi}$-coloring $\psi_k$ of $V(w^k(C))$ such that $\psi_0=\psi$ and, for  $k>0$, the following hold.
\begin{enumerate}[label=\arabic*)]
\itemsep-0.1em
\item $\psi_k$ restricted to $\textnormal{dom}(\psi_k)\cap V(w^{k-1}(C))$ is $\psi_{k-1}$\emph{; AND}
\item $V(w^k)$ is $(L, \psi_k)$-inert in $G$\emph{; AND}
\item $|\Delta_L(\psi_k, C^*_k)|\leq\left\lceil\frac{|\Delta_L(\psi_{k-1}, C^*_{k-1})|}{2}\right\rceil$.
\end{enumerate}

As $r=\lceil\log_2(|V(C)|)\rceil+2$, we have  $|\Delta_L(\psi_{r-1}, C^*_{r-1})|\leq 1$, and since $w^{r-1}(C)$ is 0-triangulated, the outer face of $G\setminus w^{r-1}(C)$ is a Thomassen facial subgraph of $G\setminus w^{r-1}(C)$ with respect to $L_{\psi_{r-1}}$, so we are done. \end{proof}

To prove Theorem \ref{Main4CycleAnnulusThm}, we need one more lemma. We first introduce the following definition. 

\begin{defn}\emph{Let $k\geq 0$ be an integer. A 4-tuple $\mathcal{L}=(G, C, L, \psi)$ is called a $k$-\emph{lens} if $G$ is a 2-connected, short-inseparable planar graph with cyclic outer face $C$, $L$ is a list-assignment for $V(G)$, and the following hold.}
\begin{enumerate}[label=\emph{\arabic*)}]
\itemsep-0.1em
\item\emph{$C$ is $k$-triangulated and $|L(v)|\geq 5$ for all $v\in B_{k+1}(C)\setminus V(C)$}; AND
\item\emph{$\psi$ is a partial $L$-coloring of $V(C)$ and $V(C)\setminus\textnormal{dom}(\psi)$ is $L_{\psi}$-inert in $G$}; AND
\end{enumerate}
\emph{We call $\mathcal{L}$ a \emph{lens} if there exists a $k\geq 0$ such that $\mathcal{L}$ is a $k$-lens.}
\end{defn}

\begin{lemma}\label{MainThmSSFSec} Let $G$ be a 2-connected, short-inseparable planar graph with outer cycle $C$ of length $n$ for some integer $n\geq 3$. Let $r:=\lceil\log_2(n)\rceil+2$, let $L$ be a list-assignment for $V(G)$ and let $\tau$ be an $L$-coloring of $V(C)$ such that $(G, C, L, \tau)$ is an $rn$-lens and $\tau$ extends to an $L$-coloring of $B_{r(n-1)}(C)$. Then there exists a 2-connected subgraph $K^*$ of $G$ with $C\subseteq K^*$ and an extension of $\tau$ to a partial $L$-coloring $\psi$ of $V(K^*)$ such that the following hold. 
\begin{enumerate}[label=\arabic*)]
\itemsep-0.1em
\item $V(K^*)$ is $(L, \psi)$-inert in $G$ and $V(K^*)\subseteq B_{rn}(C)$; AND
\item For each connected component $H$ of $G\setminus K^*$, the outer face of $H$ is a Thomassen facial subgraph of $H$ with respect to $L_{\psi}$. 
\end{enumerate}\end{lemma}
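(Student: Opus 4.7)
The plan is to construct $K^*$ in two layers: first extract a \emph{skeleton} $K\subseteq G$ containing $C$ via Proposition \ref{k+rboundpart}, then enlarge it to $K^*$ by applying Lemma \ref{RepUseCorMainSSF1} inside each inward-facing face of $K$ of sufficiently large nonsplit depth.

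First I invoke Proposition \ref{k+rboundpart} with parameters $k=r(n-3)$ and $r$ (as given). The hypothesis $k\geq r(|V(C)|-3)$ holds with equality, and $C$ is $(k+r)=r(n-2)$-triangulated because $(G,C,L,\tau)$ is an $rn$-lens and $rn\geq r(n-2)$. This yields a $(k,r)$-skeleton $K$ that is 2-connected, contains $C$, and satisfies $V(K)\subseteq B_{r(n-3)}(C)$. Every inward-facing face $D$ of $K$ is either \emph{type (a)} (meaning $\Delta^{\geq 3}(D)=\varnothing$ and $D$ has no chord in $\textnormal{Int}_G(D)$) or \emph{type (b)} (meaning $\textit{NDepth}(D)\geq r$ and $|V(D)|\leq n$). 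Since $\tau$ extends by hypothesis to an $L$-coloring of $B_{r(n-1)}(C)\supseteq V(K)$, I extend $\tau$ to an $L$-coloring $\psi_K$ of $V(K)$.

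For each type (b) face $D$ with $|V(D)|\geq 5$ (the cases $|V(D)|\in\{3,4\}$ have empty interior in $G$ by short-inseparability, so nothing is to be done), set $r_D:=\lceil\log_2|V(D)|\rceil+2\leq r$ and apply Lemma \ref{RepUseCorMainSSF1} to $\textnormal{Int}_G(D)$ with outer cycle $D$ and coloring $\psi_K|_{V(D)}$. The hypotheses are met because $\textit{NDepth}(D)\geq r\geq r_D$, and every vertex of $B_{r_D}(D)\setminus V(D)$ lies in $B_{r(n-3)+r_D}(C)\setminus V(C)\subseteq B_{rn+1}(C)\setminus V(C)$, hence has an $L$-list of size at least five by the $rn$-lens hypothesis. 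The lemma yields an extension $\psi_D^*$ of $\psi_K|_{V(D)}$ to $V(w^{r_D-1}(D))$ such that $V(w^{r_D-1}(D))$ is $(L,\psi_D^*)$-inert in $\textnormal{Int}_G(D)$ and the outer face of $\textnormal{Int}_G(D)\setminus V(w^{r_D-1}(D))$ is a Thomassen facial subgraph with respect to $L_{\psi_D^*}$. I then set $K^*:=K\cup\bigcup_{D} w^{r_D-1}(D)$ and $\psi:=\psi_K\cup\bigcup_D \psi_D^*$, where the union ranges over type (b) faces $D$ with $|V(D)|\geq 5$.

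To verify the conclusions: $K^*$ is 2-connected because $K$ is and each $w^{r_D-1}(D)$ attaches to $K$ along the cycle $D$; the bound $V(K^*)\subseteq B_{rn}(C)$ follows from $V(w^{r_D-1}(D))\subseteq B_{r-1}(D)\subseteq B_{r(n-3)+r-1}(C)\subseteq B_{rn}(C)$; and the inertness of $V(K^*)\setminus\textnormal{dom}(\psi)$ in $G$ reduces to per-chamber inertness because the uncolored vertices lie in pairwise-disjoint subregions $\textnormal{Int}_G(D)$. For the Thomassen condition, each connected component $H$ of $G\setminus K^*$ sits inside a unique inward-facing face $D$ of $K$: if $D$ is type (b), the outer face of $H$ is Thomassen with respect to $L_\psi$ directly by construction; if $D$ is type (a), every vertex of the outer face of $H$ has an $L$-list of size at least five and loses at most two colors to $V(D)$ (since $\Delta^{\geq 3}(D)=\varnothing$), so its $L_\psi$-list has size at least three, and any edge on the outer face certifies the Thomassen property. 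The main obstacle I anticipate is the bookkeeping for type (a) faces: verifying that the outer face of the residual component is genuinely a facial cycle, which requires combining the absence of chords of $D$ with short-inseparability to rule out degenerate configurations.
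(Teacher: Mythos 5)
Your proposal follows the same two-layer strategy as the paper's own proof—extract a $(k,r)$-skeleton $K$ via Proposition~\ref{k+rboundpart}, then grow $K^*$ by applying Lemma~\ref{RepUseCorMainSSF1} inside each inward-facing face of large nonsplit depth—and the distance, triangulation, and inertness checks you carry out match the paper's. If anything, your localized parameter $r_D:=\lceil\log_2|V(D)|\rceil+2$ and the explicit extension $\psi_K$ of $\tau$ to $V(K)$ tighten up two points the paper leaves slightly informal (it writes $w^r(D)$ and $w^{r-1}(D)$ interchangeably and invokes an unnamed precoloring $\sigma$ of $V(D)$).
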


\begin{proof} Let $k:=r\cdot (n-3)$. Since $\mathcal{L}$ is an $rn$-lens, we get that $C$ is $(k+r)$-triangulated. By Proposition \ref{k+rboundpart}, there is a $(k,r)$-skeleton $K$ of $C$, and, by definition, $V(K)\subseteq B_k(C)$. Since $K$ is a $(k,r)$-skeleton of $C$, we get by definition that, for each inward-facial subgraph $D$ of $K$, either $D$ has nonsplit depth at least $r$ or $D$ is an induced subgraph of $\textnormal{Int}(D)$ with $\Delta^{\geq 3}(D)=\varnothing$. We now define a 2-connected subgraph $K^*$ of $G$, where $K\subseteq K^*$, in the following way. For each inward facing facial subgraph $D$ of $K^*$, we add some edges and vertices to the closed disc bounded by $D$. If either $|V(D)|=3$ or $\textit{NDepth}(D)<r$, then we add nothing to the closed disc bounded by $D$. If $|V(D)|>3$ and $\textit{NDepth}(D)\geq r$, then we add to the closed disc bounded by $D$ the graph $w^r(D)$, where $w^r(D)$ is as in Observation \ref{trianglesplitobs}. We let $K^*$ be the graph obtained from $K$ by doing this for each inward-facing facial subgraph of $K$. We claim now that $K^*$ satisfies both of 1) and 2). Note that $V(K^*)\subseteq B_{k+r}(C)$, and $k+r\leq rn$, so $V(K^*)\subseteq B_{rn}(C)$. Let $\mathcal{D}$ be the set of inward-facing facial subgraphs $D$ of $K$ with $\textit{NDepth}(D)\geq r$ and $|V(D)|>3$. For each $D\in\mathcal{D}$, we now define an extension of $\sigma$ to an $L$-coloring $\sigma_D$ with $\textnormal{dom}(\sigma_D)\setminus\textnormal{dom}(\sigma)\subseteq\textnormal{Int}(D)\setminus V(D)$. Since $\mathcal{L}$ is an $rn$-lens and $V(D)$ is precolored by $\sigma$, it follows from Lemma \ref{RepUseCorMainSSF1} that $\sigma$ extends to a partial $L$-coloring $\sigma_D$ of $\textnormal{dom}(\sigma)\cup V(w^{r-1}(D))$ such that $V(w^{r-1}(D))\setminus\textnormal{dom}(\sigma_D)$ is $(L, \sigma_D)$-inert in $\textnormal{Int}(D)$ and the outer face of $\textnormal{Int}(D)\setminus V(w^{r-1}(D))$ is a Thomassen facial subgraph of $\textnormal{Int}(D)\setminus V(w^{r-1}(D))$ with respect to $L_{\sigma_D}$. 

Note that, for any two distinct  inward-facing facial sugraphs $D, D'$ of $K$, the open discs bounded by $D, D'$ respectively are disjoint. Thus, $\sigma$ extends to a partial $L$-coloring $\sigma^*$ of $V(K^*)$ such that, for each $D\in\mathcal{D}$, $\sigma^*$ restricts to $\sigma_D$. In particular, $V(K^*)$ is $(K, \sigma^*)$-inert in $G$. For any inward-facing facial subgraph $D$ of $K^*$ with $D\not\in\mathcal{D}$, we have either $|V(D)|=3$ or $D$ is an induced subgraph of $\textnormal{Int}(D)$ with $\Delta^{\geq 3}(D)=\varnothing$. In the former case, we have $\textnormal{Int}(D)\setminus V(D)=\varnothing$, and in the latter case, we trivially get that the outer face of $\textnormal{Int}(D)\setminus V(D)$ is a Thomassen facial subgraph with respect, so we are done. \end{proof}

With Lemma \ref{MainThmSSFSec} in hand, we can now prove Theorem \ref{Main4CycleAnnulusThm}, which we restate below. 

\begin{thmn}[\ref{Main4CycleAnnulusThm}] Let $G$ be a 2-connected, short-inseparable planar graph and let $F, F'$ be two facial subgraphs of $G$, each of which is a cycle of length at most four, where $F$ is the outer face of $G$. Let $P$ be an $(F, F')$-path, and let $n:=2|E(P)|+8$ and $r:=\lceil\log_2(n)\rceil+2$. Let $L$ be a list-assignment for $V(G)$ and $\phi$ be an $L$-coloring of $V(F\cup F'\cup P)$ such that
\begin{enumerate}[label=\arabic*)]
\itemsep-0.1em
\item $\phi$ extends to $L$-color $B_{r(n-1)}(F\cup F'\cup P)$; AND
\item For each $v\in B_{rn+1}(F\cup F'\cup P)$, every facial subgraph of $G$ containing $v$, except possibly $F, F'$, is a triangle, and, if $v\not\in V(F\cup F')$, then $|L(v)|\geq 5$.
\end{enumerate}
Then there is a 2-edge-connected subgraph $K$ of $G$ with $F\cup F'\cup P\subseteq K$ and an extension of $\phi$ to a partial $L$-coloring $\psi$ of $V(K)$ such that
\begin{enumerate}[label=\arabic*)]
\itemsep-0.1em
\item $V(K)$ is $(L, \psi)$-inert in $G$ and $V(K)\subseteq B_{rn}(F\cup F'\cup P)$; AND
\item For each connected component $H$ of $G\setminus K$, the outer face of $H$ is a Thomassen facial subgraph of $H$ with respect to $L_{\phi}$.
\end{enumerate}
 \end{thmn}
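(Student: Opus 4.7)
The plan is to reduce Theorem~\ref{Main4CycleAnnulusThm} to Lemma~\ref{MainThmSSFSec} by cutting $G$ open along $P$ to produce a planar graph $G^{\circ}$ in which the precolored skeleton $F \cup F' \cup P$ becomes a genuine outer cycle. Setting $H := F \cup F' \cup P$, the subgraph $H$ has three faces on the sphere: the vertex-free region outside $F$, the vertex-free interior of $F'$, and a third face $U$ (a topological disk) corresponding to the annulus between $F$ and $F'$ cut open along $P$. Every vertex of $G$ lies in $\textnormal{Cl}(U)$, and the boundary walk of $U$ has length $|E(F)| + |E(F')| + 2|E(P)| \le n$, traversing each edge of $F, F'$ once and each edge of $P$ twice. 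I would turn this walk into a simple cycle by splitting each internal vertex of $P$ and each endpoint $x_0, x_m$ into two copies (one on each side of $P$), duplicating the edges of $P$ accordingly, and assigning the remaining edges of $G$ incident to these vertices to the appropriate copy based on the planar embedding. The resulting graph $G^{\circ}$ is 2-connected and planar with outer boundary a simple cycle $C$ of length $n' \le n$. Extend $L$ and $\phi$ to $L^{\circ}, \phi^{\circ}$ by assigning identical lists and colors to the two copies of each duplicated vertex, so that $\phi^{\circ}$ is an $L^{\circ}$-coloring of $V(C)$.

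Next I would verify that $(G^{\circ}, C, L^{\circ}, \phi^{\circ})$ is an $r'n'$-lens, where $r' := \lceil \log_2(n') \rceil + 2 \le r$, and that $\phi^{\circ}$ extends to $L^{\circ}$-color $B_{r'(n'-1)}(C, G^{\circ})$. Condition 1 of the theorem translates cleanly since distances from $H$ in $G$ coincide with distances from $C$ in $G^{\circ}$ and $r'(n'-1) \le r(n-1)$. Condition 2 translates to $C$ being $r'n'$-triangulated in $G^{\circ}$ and all vertices of $B_{r'n'+1}(C) \setminus V(C)$ having $L^{\circ}$-list size at least five, since facial subgraphs of $G^{\circ}$ other than $C$ correspond to facial subgraphs of $G$ other than $F, F'$. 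Short-inseparability of $G^{\circ}$ follows from that of $G$ together with the shortness of $F, F'$: a short separating cycle of $G^{\circ}$ avoiding duplicated vertices descends to a separating cycle of $G$ of the same length, while one using duplicated vertices is ruled out using the triangulation near $P$ and the bound $|V(F)|, |V(F')| \le 4$. Applying Lemma~\ref{MainThmSSFSec} then yields a 2-connected $K^{\circ} \subseteq G^{\circ}$ with $C \subseteq K^{\circ}$ and an extension $\psi^{\circ}$ of $\phi^{\circ}$ such that $V(K^{\circ}) \subseteq B_{r'n'}(C) \subseteq B_{rn}(C)$, $V(K^{\circ})$ is $(L^{\circ}, \psi^{\circ})$-inert in $G^{\circ}$, and each component of $G^{\circ} \setminus K^{\circ}$ has Thomassen outer face with respect to $L^{\circ}_{\psi^{\circ}}$ and hence also $L^{\circ}_{\phi^{\circ}}$.

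Identifying duplicates then produces $K \subseteq G$ with $H \subseteq K$ and $V(K) \subseteq B_{rn}(H)$, along with an extension $\psi$ of $\phi$ which is well-defined because the two copies of each duplicated vertex are colored identically under $\phi^{\circ}$ and no vertex of $V(K^{\circ}) \setminus V(C)$ is duplicated. The inert property transfers, components of $G \setminus K$ correspond bijectively to components of $G^{\circ} \setminus K^{\circ}$, and their outer faces are Thomassen with respect to $L_{\phi}$.

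The hard part is establishing 2-\emph{edge}-connectivity of $K$, as opposed to merely inheriting 2-connectivity from $K^{\circ}$. Identification of $x_0^{\pm}$ and $x_m^{\pm}$ may create cut vertices, but cut edges can only arise from an edge $e \in E(P)$ whose two copies $e^+, e^- \in E(K^{\circ})$ are both essential for 2-connectivity of $K^{\circ}$, since removing $e$ from $K$ corresponds to removing both copies from $K^{\circ}$. I would handle this by augmenting $K^{\circ}$, before identifying duplicates, with the common-neighbor vertices witnessing the two triangular faces of $G^{\circ}$ adjacent to $e^+$ and $e^-$; such vertices exist by condition 2 and lie in $B_1(C) \subseteq B_{rn}(C)$. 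After identification, each edge of $P$ then lies on a triangular cycle in $K$, so $K$ is 2-edge-connected. The augmenting vertices have $L$-lists of size at least five, so they can be colored consistently, preserving the inert property and the Thomassen structure of the outer faces of components of $G \setminus K$.
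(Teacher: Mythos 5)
Your proof follows the same route as the paper: split each vertex of $P$ into two copies to cut $G$ open along $P$, obtaining a $2$-connected short-inseparable planar graph $G^\circ$ whose outer boundary is a cycle $C$ of length at most $n$, verify the hypotheses of Lemma \ref{MainThmSSFSec}, apply the lemma, and re-identify. Your observation that $2$-edge-connectivity of $K$ is not automatic after identification is correct and flags a real gap in the paper's two-sentence argument: Lemma \ref{MainThmSSFSec} permits the output $K^\circ = C$ (this happens when the $(k,r)$-skeleton from Proposition \ref{k+rboundpart} is $C$ itself, which is allowed when $\Delta^{\geq 3}(C)=\varnothing$ and $C$ has no chord in $\textnormal{Int}(C)$), and then after identification $K = F\cup F'\cup P$, in which every edge of $P$ is a bridge.

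However, your proposed repair --- augmenting $K^\circ$ before identification with the apexes of the triangular faces incident to each lift $e^+, e^-$ of an edge of $P$ --- is left at the level of assertion in its final two sentences. Coloring a newly added apex $w$ removes a color from the $L^\circ_{\psi^\circ}$-list of each uncolored neighbor of $w$, and some of those neighbors lie on the outer faces of the components of $G^\circ\setminus K^\circ$; Lemma \ref{MainThmSSFSec} only promises such vertices lists of size at least $3$, so a single additional colored neighbor could break the Thomassen condition, and the $(L,\psi)$-inertness of the enlarged set is not re-established either. These points should be checkable since the apexes carry $5$-lists and sit in $B_1(C)$, but as written the argument is incomplete. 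It is also worth noting that the $2$-edge-connectivity conclusion of Theorem \ref{Main4CycleAnnulusThm} does not appear to be invoked in the paper's downstream application in Claim \ref{ReplPrevCase1}, so the discrepancy may lie in the statement rather than the proof; if the conclusion is to be proved as stated, your extra step is needed but must be fleshed out.
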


\begin{proof} The idea is to create an auxiliary graph in which the precolored subgraph $F\cup F'\cup P$ has been replaced by a precolored cycle of length $|E(F)|+|E(F')|+2|E(P)|$ so that we can apply Lemma \ref{MainThmSSFSec}. Let $P:=v_1\cdots v_k$, where $v_1\in V(F)$ and $v_k\in V(F')$. Now, there is an open neighborhood $U$ of $\mathbb{R}^2$ such that $U\cap V(G)=V(P)$. By splitting each vertex of $P$ within $U$, we obtain an auxiliary planar embedding $G^{\dagger}$, where each $v_i$ has be replaced by two vertices $u_i, u_i^*$, and:
\begin{enumerate}[label=\arabic*)]
\itemsep-0.1em
\item For each $i\in\{1, \cdots, k\}$, $u_iu_i^*\not\in E(G^{\dagger})$ and each vertex of $G$ of distance one from $F\cup F'\cup P$ is adjacent to precisely one of $u_i, u_i^*$ in $G^{\dagger}$; AND
\item $G^{\dagger}$ has outer cycle $(F-v_1)+u_1u_2\cdots u_k+(F'-v_k)+(u_k^*u_{k-1}^*\cdots u_1^*)$; AND
\item $G$ is recovered from $G^{\dagger}$ by identifying $u_i, u_i^*$ for each $i=1, \cdots, k$, then deleting the resulting duplicate edges.
\end{enumerate}
Now, let $L^{\dagger}$ be a list-assignment for $V(G^{\dagger})$ where $L^{\dagger}(x)=L(x)$ for each $x\in V(G^{\dagger})\setminus\{u_1, \cdots, u_k, u_1^*, \cdots, u_k^*\}$, and $L^{\dagger}(u_i)=L^{\dagger}(u_i^*)=L(v_i)$ for each $i=1,\cdots, k$. Let $C^{\dagger}$ denote the outer cycle of $G^{\dagger}$. Note that, since $G$ is 2-connected and short-inseparable, $G^{\dagger}$ is as well. Furthermore, since $u_iu_i^*\not\in E(G^{\dagger})$ for each $1\leq i\leq k$, there is an $L^{\dagger}$-coloring $\tau$ of $V(C^{\dagger})$ obtained from
$\phi$, where $\tau(u_i)=\tau(u_i^*)=\phi(v_i)$ for each $1\leq i\leq k$ and $\tau(x)=\phi(x)$ for each $x\in V(D\cup D')\setminus\{v_1, v_k\}$. Now we obtain Theorem \ref{Main4CycleAnnulusThm} by first applying Lemma \ref{MainThmSSFSec} to the lens $(G^{\dagger}, C^{\dagger}, L^{\dagger}, \tau)$ and then recovering $G$ by identifying $u_i, u_i^*$ for each $1\leq i\leq k$ and deleting the resulting parallel edges. \end{proof}

\section{An Edge-Maximality Lemma}\label{SimpleExemaxLemmaFormodblockRem}

With Theorem \ref{Main4CycleAnnulusThm} in hand, we now prove Theorem \ref{5ListHighRepFacesFarMainRes} over Sections \ref{SimpleExemaxLemmaFormodblockRem}-\ref{thisisidicritCOMPcomcriTT}. We first need the lemma below. The reason we need Lemma \ref{TriangulationCorMainLmemmaused1} is that our proof of Theorem \ref{5ListHighRepFacesFarMainRes} relies on a result from \cite{JNevinPapIISeq} as a black box, which is stated below in Section \ref{RecallResFromPaperIISeq} as Theorem \ref{PaIIBlackBoxTessMain}, but this result only applies when certain triangulation conditions are satisfied. 

\begin{lemma}\label{TriangulationCorMainLmemmaused1}
Let $\alpha\geq 1$ be an integer, $\Sigma$ be a surface, and $G$ be a connected embedding on $\Sigma$, and let $\mathcal{C}=\{C_1,\cdots, C_m\}$ be a collection of facial subgraphs of $G$ such that $d_G(C_i, C_j)\geq\alpha$ for each $1\leq i<j\leq m$. There exists an embedding $G'$ on $\Sigma$, where $G'$ is obtained from $G$ by adding edges to $G$, such that the following hold.
\begin{enumerate}[label=\arabic*)]
\itemsep-0.1em
\item $\textnormal{fw}(G')=\textnormal{fw}(G)$ and  $d_{G'}(C_i, C_j)\geq\alpha$ for each $1\leq i<j\leq m$; AND
\item Each element of $\mathcal{C}$ is also a a facial subgraph of $G'$; AND
\item For every facial subgraph $H$ of $G'$ with $H\not\in\mathcal{C}$, every induced cycle in $G'[V(H)]$ is a triangle.
\end{enumerate}
 \end{lemma}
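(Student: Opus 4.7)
My approach is to construct $G'$ from $G$ by triangulating each facial subgraph of $G$ not in $\mathcal{C}$. Iteratively, I pick a facial subgraph $H$ of the current embedding with $H \notin \mathcal{C}$ and $|V(H)| \geq 4$, then add a chord edge $uv$, drawn as an arc inside the open 2-cell bounded by $H$, with endpoints $u, v \in V(H)$. This splits $H$ into two smaller facial subgraphs. The process continues until every facial subgraph outside $\mathcal{C}$ has at most 3 vertices on its boundary walk, yielding $G'$. Property 2) of the lemma (each $C_i \in \mathcal{C}$ is still a facial subgraph) is immediate, since no chord is drawn inside the face bounded by any $C_i$. Property 3) follows: each facial subgraph $H$ of $G'$ with $H \notin \mathcal{C}$ satisfies $|V(H)| \leq 3$, so $G'[V(H)]$ has no induced cycle of length $\geq 4$.

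For property 1), the face-width inequality $\textnormal{fw}(G') \geq \textnormal{fw}(G)$ is immediate from $G \subseteq G'$. For the reverse inequality, I fix an optimal noncontractible closed curve $\gamma$ for $G$, perturbed to meet $G$ only at vertices with $|\gamma \cap G| = \textnormal{fw}(G)$, and arrange the triangulation so that $\gamma$ crosses no new chord. Since each new chord lies inside a single contractible 2-cell, and the portion of $\gamma$ inside that cell is a simple arc between two boundary vertices, I can draw each chord so that it lies entirely on one side of $\gamma$'s arc within the cell, so $\gamma$ never crosses a chord. This yields $|\gamma \cap G'| = \textnormal{fw}(G)$, establishing $\textnormal{fw}(G') \leq \textnormal{fw}(G)$.

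For the distance bound $d_{G'}(C_i, C_j) \geq \alpha$, the key structural observation is that any facial subgraph $H$ of $G$ whose vertex set contains both $u \in V(C_i)$ and $v \in V(C_j)$ for $i \neq j$ must have $|V(H)| \geq 2\alpha$, since $d_G(u, v) \leq |V(H)|/2$ via the boundary walk of $H$, while $d_G(C_i, C_j) \geq \alpha$. For such long facial subgraphs, I partition $V(H)$ according to which $C_i$ is closest (with a ``far'' label for vertices distant from all $C_i$) and triangulate using only chords within a single partition. Any path in $G'$ from $C_i$ to $C_j$ using the new chords can then be unfolded into a walk in $G$ of length at least $\alpha$, by replacing each chord with a short boundary arc within its partition.

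The main obstacle is making both constraints simultaneously satisfiable: the chord placement must respect the topological constraint (chords lie on one side of $\gamma$, for face-width preservation) and the metric constraint (chords stay within a single partition, for distance preservation). I expect the distance-preservation side to be the more delicate part of the proof, requiring careful accounting of how the boundary partitions interact with the chord placement. This may necessitate a refined inductive argument on face boundary length, or an explicit case analysis on how each face's boundary intersects the various $C_i$.
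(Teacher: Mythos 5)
The student's proposal diverges from the paper's proof precisely where the difficulty lies, and the gap is not a minor detail: the distance-preservation argument as sketched does not work.

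Both you and the paper triangulate iteratively and preserve face-width by drawing chords inside 2-cells, so the skeleton of the construction matches. But the paper's proof of the distance bound proceeds quite differently from yours. It never commits to a global ``partition the face boundary and chord within one class'' scheme. Instead, it finds a chordless non-triangle cycle $H' = v_1\cdots v_k v_1$ inside a non-$\mathcal{C}$ face, considers only the $k$ candidate \emph{ear} chords $v_j v_{j+2}$ (boundary distance two), observes that any single ear can shorten a pairwise distance $d(C_s, C_t)$ by at most $1$ (since the ear replaces a length-$2$ boundary arc by a length-$1$ edge), defines $B_j$ as the set of ordered pairs $(s,t)$ for which adding $v_jv_{j+2}$ would bring $d(C_s,C_t)$ down to exactly $\alpha-1$, and then proves by a careful monotonicity/contradiction argument (Claims \ref{onestepsubclaim}--\ref{onlyoneclose}) that $B_j$ cannot be nonempty for all $j$. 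That existence of a ``safe ear'' for every chordless cycle is the entire technical content of the lemma.

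Your partition-by-nearest-$C_i$ argument does not substitute for this. A chord $uv$ with both endpoints closest to $C_1$ can still shorten $d(C_s, C_t)$ for $s,t \neq 1$: the relevant inequality is $d_{G'}(C_s,C_t) \geq d_G(C_s,u) + 1 + d_G(v,C_t)$, and knowing $u,v$ are each nearer to $C_1$ than to $C_s$ or $C_t$ gives no useful lower bound on that sum. Even restricting to ears (boundary arcs of length $2$), each chord can lose $1$, so the argument must pinpoint a chord that loses $0$. Moreover, the ``unfolding'' step is logically inverted: replacing a chord of a $G'$-path by its length-$\geq 2$ boundary arc produces a \emph{longer} walk in $G$, so knowing that walk has length $\geq \alpha$ (which it does, trivially, since $d_G(C_i,C_j)\geq\alpha$) gives no lower bound on the length of the original $G'$-path. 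The intended conclusion simply does not follow.

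Two smaller points. First, the stopping condition ``every facial subgraph outside $\mathcal{C}$ has at most three boundary vertices'' is not quite the target; the lemma wants every induced cycle of $G'[V(H)]$ to be a triangle, and facial subgraphs need not be cycles, so the paper drives the induction on chordless non-triangle cycles inside $G[V(H)]$ rather than on $|V(H)|$. Second, your face-width argument is phrased as ``perturb one fixed optimal $\gamma$ to avoid all new chords simultaneously''; once a face is split by several chords, an arc of $\gamma$ through that face cannot always be routed past every chord without picking up new intersections. The clean way (and the paper's) is to handle one chord at a time: adding a single edge to a face so that at least one resulting subface is a disc preserves face-width, and then iterate. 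You should decouple the face-width step from the global partition scheme in any case, since the latter needs to be discarded.
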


\begin{proof} Suppose first that, for every facial subgraph $H$ of $G$ with $H\not\in\mathcal{C}$, every induced cycle in $G[V(H)]$ is a triangle. If that holds, then we take $G'=G$ and we are done. Now suppose there exists a facial subgraph $H$ of $G$, with $H\not\in\mathcal{C}$ and a chordless cycle $H'\subseteq G[V(H)]$ such that $H'$ is not a triangle. Let $H'=v_1\cdots v_kv_1$ for some $k\geq 3$. By definition, there is  a component $U$ of $\Sigma\setminus G$ with $H=\partial(H)$, so $V(H')\subseteq\partial(U)$. Now, for any indices $i,j\in\{1, \cdots, k\}$ with $|i-j|>1$, there is an embedding $G^{\dagger}$ obtained from $G$ by adding $v_iv_j$ to $\textnormal{Cl}(U)$, where at least one of the components of $\Sigma\setminus G'$ contained in $U$ is a disc. Thus, $\textnormal{fw}(G^{\dagger})=\textnormal{fw}(G)$, so, to prove Lemma \ref{TriangulationCorMainLmemmaused1}, it suffices to show that there exists an index $j\in\{1,\cdots, k\}$ such that, setting $G^{\dagger}:=G+v_jv_{j+2}$, we have $d_{G^{\dagger}}(C_s, C_t)\geq\alpha$ for any distinct indices $s,t\in\{1,\cdots, m\}$. If we show that the above holds, then we simply iterate until, after a finite number of steps, we obtain a embedding $G'$ which satisfies properties 1)-3) above. Suppose toward a contradiction that there does not exist an index $j\in\{1, \cdots, k\}$ satisfying the above property. For the remainder of the proof of Lemma \ref{TriangulationCorMainLmemmaused1}, a distance between two vertices of $V(G)$ without a subscript  denotes a distance between these two vertices in the initial graph $G$. Note that, for any distinct indices $s,t\in\{1,\cdots, m\}$ and any $j\in\{1,\cdots, k\}$, since $d(C_s, C_t)\geq\alpha$, we have $d(v_j, C_s)+d(v_{j+2}, C_t)\geq\alpha-2$. For each $j\in\{1,\cdots, k\}$, let $B_j$ be the set of pairs $(s,t)\in\{1, \cdots, m\}\times\{1, \cdots, m\}$ such that $d(C_s, v_j)+d(C_t, v_{j+2})=\alpha-2$. If there exists a $j\in\{1, \cdots, k\}$ such that $B_j=\varnothing$, then, setting $G^{\dagger}:=G+v_jv_{j+2}$, we have $d_{G^{\dagger}}(C_s, C_t)\geq\alpha$ for all $1\leq s<t\leq m$, contradicting our assumption. Thus, we have $B_j\neq\varnothing$ for each $j\in\{1, \cdots, k\}$. Let $B:=\bigcup_{j=1}^kB_j$.

\begin{claim}\label{onestepsubclaim}

Let $j\in\{1,\cdots,k\}$ and let $(s,t)\in B_j$. Note that the following distance conditions hold:

\begin{enumerate}[label=\arabic*)]
\itemsep-0.1em 
\item $d(C_t, v_j)=d(C_t, v_{j+2})+2$; AND
\item $d(C_s, v_{j+2})=d(C_s, v_j)+2$; AND
\item $d(C_t, v_{j+1})=d(C_t, v_{j+2})+1$; AND
\item $d(C_s, v_{j+1})=d(C_s, v_j)+1$.
\end{enumerate}

\end{claim}

It immediately follows from Claim \ref{onestepsubclaim} that $d(C_s, v_r)+d(C_t, v_r)=\alpha$ for each $r\in\{j, j+1, j+2\}$. 

\begin{claim}\label{onlyoneclose} Let $j\in\{1,\cdots,k\}$, let $(s,t)\in B_j$, and suppose that $d(C_s, v_j)\leq\frac{\alpha}{2}-1$. Then the following hold.
\begin{enumerate}[label=\arabic*)]
\itemsep-0.1em
\item For every pair $(p,q)\in B_{j-1}$, either $p=s$ or $q=s$; AND
\item For every $(p,q)\in B_{j+1}$, either $p=s$ or $q=s$. 
\end{enumerate}
\end{claim}

\begin{claimproof} Since $B_{j-1}\neq\varnothing$, there is a pair $(p,q)\in B_{j-1}$. Suppose that $p\neq s$. Now, by Claim \ref{onestepsubclaim}, we have $d(C_p, v_j)=d(C_p, v_{j-1})+1$. Since $d(C_s, v_j)\leq\frac{\alpha}{2}-1$ and $s\neq q$, we have $d(C_p, v_j)\geq\frac{\alpha}{2}+1$, or else $d(C_p, C_s)<\alpha$. Thus, we have $d(C_p, v_{j-1})\geq\frac{\alpha}{2}$. Since $d(C_p, v_{j-1})+d(C_q, v_{j+1})=\alpha-2$, we have $d(C_q, v_{j+1})\leq\frac{\alpha}{2}-2$. Thus we have $d(C_q, v_{j})\leq\frac{\alpha}{2}-1$, so $q=s$, or else we have distinct cycles $C_s, C_q$ such that $d(C_s, C_q)\leq\alpha-2$, violating our distance conditions. Now let $(p,q)\in B_{j+1}$ and suppose that $p\neq s$. As above, since $d(C_s, v_j)\leq\frac{\alpha}{2}-1$ and $p\neq s$, we have $d(C_p, v_j)\geq\frac{\alpha}{2}+1$, or else $d(C_p, C_s)<\alpha$. Thus, we have $d(C_p, v_{j+1})\geq\frac{\alpha}{2}$. Since $d(C_p, v_{j+1})+d(C_q, v_{j+3})=\alpha-2$, we have $d(C_q, v_{j+2})\leq\frac{\alpha}{2}-2$. Now, since $d(C_s, v_j)\leq\frac{\alpha}{2}-1$, we have $d(C_s, v_{j+2})\leq\frac{\alpha}{2}+1$. Thus, we have $q=s$, or else there are distinct cycles $C_s, C_q$ such that $d(C_s, C_q)\leq\alpha-1$, contradicting our distance conditions. \end{claimproof}

Now we choose an index $j^{\star}\in\{1,\cdots, k\}$ and a pair $(s^{\star}, t^{\star})\in B$ such that the quantity $\min\{d(C_{s^{\star}}, v_{j^{\star}}), d(C_{t^{\star}}, v_{j^{\star}+2})\}$ is minimized. Consider the following cases:

\textbf{Case 1:}  $\min\{d(C_{s^{\star}}, v_{j^{\star}}), d(C_{t^{\star}}, v_{j^{\star}+2})\}=d(C_{t^{\star}}, v_{j^{\star}+2})$.

In this case, since $d(C_{s^{\star}}, v_{j^{\star}})+d(C_{t^{\star}}, v_{j^{\star}+2})=\alpha-2$, we have $d(C_{s^{\star}}, v_{j^{\star}})\leq\frac{\alpha}{2}-1$. 

\begin{claim} $d(C_{s^{\star}}, v_{j^{\star}-1})=d(C_{s^{\star}}, v_{j^{\star}})$. \end{claim}

\begin{claimproof} Suppose not. Then we have $d(C_{s^{\star}}, v_{j^{\star}-1})=d(C_{s^{\star}}, v_j)\pm 1$. If $d(C_{s^{\star}}, v_{j^{\star}-1})=d(C_{s^{\star}}, v_{j^{\star}})-1$, then  applying Claim \ref{onlyoneclose}, $B_{j^{\star}-1}$ either contains a pair of the form $(s^{\star}, q)$, or a pair of the form $(p, s^{\star})$. In either case, we contradict the minimality of $(s^{\star}, t^{\star})$. Thus, we have $d(C_{s^{\star}}, v_{j^{\star}-1})=d(C_{s^{\star}}, v_{j^{\star}})+1$. Applying Claim \ref{onestepsubclaim}, we have $d(C_{s^{\star}}, v_{j^{\star}-1})=d(C_{s^{\star}}, v_{j^{\star}+1})=d_G(C_{s^{\star}}, v_{j^{\star}})+1$. By Claim \ref{onlyoneclose}, $B_{j^{\star}-1}$ either contains a pair of the form $(s,q)$ or a pair of the form $(q,s)$. If $B_{j^{\star}-1}$ contains a pair of the form $(s^{\star} ,q)$, then, by Claim \ref{onestepsubclaim}, we have $d(C_{s^{\star}}, v_{j^{\star}+1})=d(C_s, v_{j^{\star}-1})$, contradicting the fact that $d(C_{s^{\star}}, v_{j^{\star}-1})=d(C_{s^{\star}}, v_{j+1})=d(C_{s^{\star}}, v_{j^{\star}})+1$. Thus, $B_{j^{\star}-1}$ contains a pair of the form $(q, s^{\star})$ for some $q\in\{1,\cdots, m\}$. Thus, by Claim \ref{onestepsubclaim}, we have $d(C_{s^{\star}}, v_{j^{\star}-1})=d(C_{s^{\star}}, v_{j^{\star}+1})+2$. But we also have $d(C_{s^{\star}}, v_{j^{\star}+1})=d(C_{s^{\star}}, v_{j^{\star}})+1$, applying Claim \ref{onestepsubclaim} to the pair $(s^{\star}, t^{\star})$, so $d(C_{s^{\star}}, v_{j^{\star}-1})=d(C_{s^{\star}}, v_{j^{\star}})+3$, which is false since $v_{j^{\star}}, v_{j^{\star}-1}$ are adjacent. \end{claimproof} 

Since $B_{j^{\star}-1}\neq\varnothing$,  there exists a $(p,q)\in B_{j^{\star}-1}$. Since $d(C_{s^{\star}}, v_{j^{\star}})\leq\frac{\alpha}{2}-1$, Claim \ref{onlyoneclose} implies that either $p=s^{\star}$ or $q=s^{\star}$. If $p=s^{\star}$ then we have $d(C_{s^{\star}}, v_{j^{\star}})=d(C_{s^{\star}}, v_{j^{\star}-1})+1$, contradicting the fact that $d(C_{s^{\star}}, v_{j^{\star}})=d(C_{s^{\star}}, v_{j^{\star}-1})$. Thus, $q=s^{\star}$, and it follows from Claim \ref{onestepsubclaim} applied to $(p, s^{\star})$ that $d(v_{j^{\star}}, C_{s^{\star}})=d(v_{j^{\star}+2}, C_{s^{\star}})+2$. Yet, by Claim \ref{onestepsubclaim}, we also have $d(v_{j^{\star}+2}, C_{s^{\star}})=d(v_{j^{\star}}, C_{s^{\star}})+2$, a contradiction.

\textbf{Case 2:}  $\min\{d(C_{s^{\star}}, v_{j^{\star}}), d(C_{t^{\star}}, v_{j^{\star}+2})\}=d(C_{t^{\star}}, v_{j^{\star}+2})$.

In this case, we simply reverse the orientation and apply the same argument as above. For each $j\in\{1,\cdots, k\}$, we set $\hat{B}_j$ to be the set of pairs $(s,t)\in\{1, \cdots, m\}\times\{1, \cdots, m\}$ such that $d(C_s, v_j)+d(C_s, v_{j-2})=\alpha-2$. Then we are back to Case 1 with $B_1, \cdots, B_k$ replaced by $\hat{B}_1, \cdots, \hat{B}_k$. This completes the proof of Lemma \ref{TriangulationCorMainLmemmaused1}. \end{proof}

\section{Charts and Tilings}\label{RecallResFromPaperIISeq}

Over the course of the proof of Theorem \ref{5ListHighRepFacesFarMainRes} below, we deal with the following structures. 

\begin{defn}\label{MainChartDefn}\emph{Let $k, \alpha\geq 1$ be integers. A tuple $\mathcal{T}=(\Sigma, G, \mathcal{C}, L, C_*)$ is called an $(\alpha, k)$-\emph{chart} if $\Sigma$ is a surface, $G$ is an embedding on $\Sigma$ with list-assignment $L$, and $\mathcal{C}$ is a family of facial subgraphs of $G$ such that}

\begin{enumerate}[label=\emph{\arabic*)}]
\itemsep-0.1em 
\item \emph{$C_*\in\mathcal{C}$ and, for any distinct $H_1, H_2\in\mathcal{C}$, we have $d(H_1, H_2)\geq\alpha$}; AND
\item \emph{$|L(v)|\geq 5$ for all $v\in V(G)\setminus \left(\bigcup_{H\in\mathcal{C}}V(H)\right)$}; AND
\item \emph{For each $H\in\mathcal{C}$, there exists a connected subgraph $\mathbf{P}_{\mathcal{T},H}$ of $H$ satisfying the following.}
\begin{enumerate}[label=\emph{\roman*)}]
\itemsep-0.1em 
\item \emph{$|E(\mathbf{P}_{\mathcal{T}, H})|\leq k$ and $V(\mathbf{P}_{\mathcal{T},H})$ is $L$-colorable}; AND
\item \emph{$|L(v)|\geq 3$ for all $v\in V(H)\setminus V(\mathbf{P}_{\mathcal{T},H})$.}
\end{enumerate}
\end{enumerate}

\end{defn}

The definition above entails that $\mathcal{C}\neq\varnothing$, as $C_*$ is an element of $\mathcal{C}$, but, given an $H\in\mathcal{C}$, we possibly have $\mathbf{P}_{\mathcal{T}, H}=\varnothing$. If the underlying chart is clear from the context, we usually drop the $\mathcal{T}$ from the notation $\mathbf{P}_{\mathcal{T}, H}$ to avoid clutter.

\begin{defn}\label{ChartMoreTerms}
\emph{Given a surface $\Sigma$, we define the following.}
\begin{enumerate}[label=\emph{\arabic*)}]
\itemsep-0.1em
\item\emph{A tuple $\mathcal{T}$ is called a \emph{chart} if there exist integers $k,\alpha\geq 1$ such that $\mathcal{T}$ is an $(\alpha, k)$-chart.}
\item\emph{A chart $\mathcal{T}=(\Sigma, G, \mathcal{C}, L, C_*)$ is called \emph{colorable} if $G$ is $L$-colorable. We call $G$ the \emph{underlying graph} of the chart and we call $\Sigma$ the \emph{underlying surface} of the chart.}
\item \emph{Given a chart $\mathcal{T}$, we say that $\mathcal{T}$ is \emph{short-inseparable} if the underlying graph of $\mathcal{T}$ is short-inseparable. We say that $\mathcal{T}$ is \emph{chord-triangulated} if, for every facial subgraph $H$ of $G$ with $H\not\in\mathcal{C}$, every induced cycle of $G[V(H)]$ is a triangle. We call $\mathcal{T}$ a \emph{tessellation} if it is short-inseparable and chord-triangulated. Given integers $k, \alpha\geq 1$, we call $\mathcal{T}$ an $(\alpha, k)$-\emph{tessellation} if it is both a tessellation and an $(\alpha, k)$-chart.}
\end{enumerate}
\end{defn}

We now state two black boxes we need in order to prove Theorem \ref{5ListHighRepFacesFarMainRes}. The theorem below is the key black box we need from \cite{JNevinPapIISeq}, which shows that Theorem \ref{5ListHighRepFacesFarMainRes} holds for charts which are both chord-triangulated and short-inseparable. Actually, the main result of \cite{JNevinPapIISeq} is stronger than Theorem \ref{PaIIBlackBoxTessMain},  but the result below is enough for our purposes. 

\begin{theorem}\label{PaIIBlackBoxTessMain} Let $\beta$ be a constant with $\beta\geq 4\cdot 10^6$ and let $\Sigma$ be a surface with $g:=g(\Sigma)$. Let $(\Sigma, G, \mathcal{C}, L, C_*)$ be a $(2.1\beta\cdot 6^{g}, 4)$-tessellation, where $\textnormal{fw}(G)\geq 2.1\beta\cdot 6^{g}$ and furthermore, for each $H\in C$, either $\mathbf{P}_H$ is either a path of length or $H$ is a cycle of length at most four with $\mathbf{P}_H=H$. Then $G$ is $L$-colorable. 
 \end{theorem}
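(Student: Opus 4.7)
My plan is to prove this by induction on the genus $g=g(\Sigma)$, reducing a tessellation on a surface of genus $g$ to tessellations on surfaces of strictly smaller genus by cutting along a carefully chosen short non-contractible curve.

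For the base case $g=0$, the surface is a sphere and $G$ is planar. By the hypothesis on each $H\in\mathcal{C}$, the precolored subgraph $\mathbf{P}_H$ is either a single edge (so $H$ can play the role of $F$ in Theorem \ref{Main4CycleAnnulusThm} when paired with another element of $\mathcal{C}$) or a short cycle $H$ itself. I would iteratively apply Theorem \ref{Main4CycleAnnulusThm} to pairs of elements of $\mathcal{C}$ joined by short $(F,F')$-paths, using the triangulation and list-size conditions guaranteed by the tessellation property to satisfy the hypotheses. This yields an extended partial coloring whose uncolored components each have a Thomassen facial subgraph as outer face, and then Theorem \ref{thomassen5ChooseThm} finishes the coloring.

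For the inductive step $g\geq 1$, the face-width hypothesis $\textnormal{fw}(G)\geq 2.1\beta\cdot 6^{g}$ allows me to choose a non-contractible simple closed curve $\gamma$ on $\Sigma$ that is (i) short, (ii) at distance at least $2.1\beta\cdot 6^{g-1}$ from every $H\in\mathcal{C}$, and (iii) meets $G$ only in vertices. Such a $\gamma$ exists because the union of bounded neighborhoods of the finitely many precolored components is topologically small compared with the space of non-contractible curves available on a high-face-width embedding. I would then apply Theorem \ref{Main4CycleAnnulusThm} to two short facial subgraphs lying on opposite sides of $\gamma$, using $\gamma$ itself to produce the required $(F,F')$-path, and thereby extend the precoloring across a thin annular collar of $\gamma$, obtaining a $2$-edge-connected subgraph $K\supseteq\gamma$ whose removal leaves components with Thomassen outer faces. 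Cut $\Sigma$ along $K$ exactly as in the proof of Theorem \ref{Main4CycleAnnulusThm} (duplicating the vertices on $\gamma$) to obtain an embedding $G'$ on a surface $\Sigma'$ of genus at most $g-1$, with one or two new precolored facial cycles of length at most four added to $\mathcal{C}$, and verify that $(\Sigma',G',\mathcal{C}',L',C_*)$ is a $(2.1\beta\cdot 6^{g-1},4)$-tessellation so that the inductive hypothesis applies.

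The main obstacle is the simultaneous control of three competing quantities: the length of $\gamma$ (which must be small so that the annulus theorem applies), the distance from $\gamma$ to each $H\in\mathcal{C}$ (which must be large enough to preserve the pairwise-distance condition on $\Sigma'$), and the face-width of the cut surface $\Sigma'$ (which must remain at least $2.1\beta\cdot 6^{g-1}$). The constant $\beta\geq 4\cdot 10^6$ and the factor $6^g$ are calibrated to make this tradeoff uniform across all $g$; the use of Theorem \ref{Main4CycleAnnulusThm} to replace the bare curve $\gamma$ by a $2$-connected annular collar before cutting is the crucial step, since it converts the face-width loss caused by cutting into a controlled multiplicative factor of at most $6$ per genus step rather than an uncontrolled one. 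Secondary bookkeeping issues, such as verifying that the triangulation and $5$-list hypotheses of Theorem \ref{Main4CycleAnnulusThm} hold in the collar of $\gamma$ whenever $\gamma$ is sufficiently far from $\mathcal{C}$, follow from the tessellation structure.
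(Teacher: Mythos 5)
This theorem is not proved in the paper at all: it is explicitly cited as a black box from \cite{JNevinPapIISeq}, the second paper in the sequence, whose entire purpose is to establish this statement (the paper even remarks that the result in Part~II is stronger than what is restated here). So there is no ``paper's own proof'' to compare against; the correct thing to do here is cite the external reference, not attempt a proof from scratch.

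That said, evaluating your sketch on its own merits, it has two serious gaps, and both arise from a mismatch between what Theorem \ref{Main4CycleAnnulusThm} actually provides and what you are asking of it. First, Theorem \ref{Main4CycleAnnulusThm} requires both $F$ and $F'$ to be cycles of length at most four, but the tessellation in the statement you are proving permits faces $H\in\mathcal{C}$ whose precolored part $\mathbf{P}_H$ is only a short path, with the rest of $H$ (an arbitrarily long facial cycle) carrying $3$-lists. Such $H$ cannot be fed into the annulus theorem as $F$ or $F'$. Moreover, even for two short-cycle faces, \ref{Main4CycleAnnulusThm} gives you a collar $K$ and leaves components of $G\setminus K$ with a Thomassen outer face; if there are further elements of $\mathcal{C}$ sitting inside those components, there is no reason the hypotheses of \ref{Main4CycleAnnulusThm} continue to hold for them relative to $L_\psi$, so ``iteratively apply'' in the base case does not go through as written. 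This is precisely the multi-face coordination problem that Part~II is devoted to resolving. Second, in the inductive step you assert the existence of a short noncontractible curve $\gamma$ that is simultaneously far from every $H\in\mathcal{C}$ and meets $G$ in few points; the justification (``topologically small compared with the space of non-contractible curves'') is not an argument, and in general such a curve need not exist without substantial work. Even granting $\gamma$, once you cut and duplicate vertices along a $2$-connected collar, you must verify that the resulting object on the lower-genus surface is again a tessellation with the right pairwise distances and face-width $\geq 2.1\beta\cdot 6^{g-1}$; this verification is the entire content of the multiplicative-$6^g$ calibration and cannot be left to ``secondary bookkeeping.'' In short: the approach sketches a plausible genus-reduction strategy, but the load-bearing steps — the planar base case with many precolored faces of mixed type, and the existence and control of the cut — are exactly the hard part, and your sketch does not supply them.
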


We also need the following result from \cite{HyperbolicFamilyColorSurfPap}. 

\begin{theorem}\label{cylindertheorem}
There is a constant $\gamma\geq 0$ such that the following holds: Let $G$ be a planar embedding and let $C_1, C_2$ be facial cycles of $G$, each of length at most four, where $d(C_1, C_2)\geq\gamma$. Let $L$ be a list-assignment for $V(G)$, where $V(C_1\cup C_2)$ is $L$-colorable and each vertex of $G\setminus (C_1\cup C_2)$ has a list of size at least five. Then $G$ is $L$-colorable. \end{theorem}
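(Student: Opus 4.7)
My plan is to prove Theorem~\ref{cylindertheorem} by taking a vertex-minimum counterexample $G$ and deriving a contradiction using Theorem~\ref{Main4CycleAnnulusThm} together with Thomassen's Theorem~\ref{thomassen5ChooseThm}. The first stage is a sequence of standard reductions. By minimality, $G$ is $2$-connected: a cut vertex would let us split $G$ into pieces each containing at most one of $C_1,C_2$, and each piece could be colored by the minimality hypothesis. Next, $G$ is short-inseparable, since any separating $3$- or $4$-cycle $C$ would let us first color $\textnormal{Int}(C)$ via Corollary~\ref{CycleLen4CorToThom} (after extending $\phi$ to $V(C)$ using the minimality hypothesis on the other side) and then finish the outside, again by minimality. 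I may also assume $C_1$ is the outer face.

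Second, I would apply Lemma~\ref{TriangulationCorMainLmemmaused1} with $\mathcal{C}=\{C_1,C_2\}$ and $\alpha=\gamma$ to add edges to $G$ so that the resulting graph is chord-triangulated while preserving the distance $d(C_1,C_2)\geq\gamma$ and keeping $C_1,C_2$ facial. Since adding edges only makes $L$-coloring harder, the counterexample property is preserved. Combined with short-inseparability, chord-triangulation forces every face other than $C_1,C_2$ to be a triangle: any $(\geq 4)$-face with a boundary chord would produce either a separating triangle or a separating $4$-cycle. Now let $P$ be a shortest $(C_1,C_2)$-path; $|E(P)|=d(C_1,C_2)\geq\gamma$. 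Greedily extend $\phi$ along $P$ from $V(C_1)$ inward (each internal vertex of $P$ has a $5$-list and at most two already-colored neighbors along the geodesic). Set $n:=2|E(P)|+8$ and $r:=\lceil\log_2 n\rceil+2$.

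Third, I would verify hypothesis 1) of Theorem~\ref{Main4CycleAnnulusThm}, namely that $\phi$ extends to $L$-color the ball $B_{r(n-1)}(C_1\cup C_2\cup P)$, by appealing to the minimality of $G$. If this ball is all of $G$ then we are done directly by applying Theorem~\ref{Main4CycleAnnulusThm} and combining with Thomassen's theorem as below. Otherwise the induced subgraph $H$ on this ball is a proper subgraph whose outer face is a short cycle bounding the complement, giving a smaller cylinder-type instance with distance parameter at least $\gamma - O(\log\gamma)$; by choosing $\gamma$ large enough, the minimality of $G$ covers this subinstance. Hypothesis 2) has already been arranged via chord-triangulation and the $5$-list assumption off $C_1\cup C_2$. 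Applying Theorem~\ref{Main4CycleAnnulusThm} then produces a $2$-edge-connected $K\supseteq C_1\cup C_2\cup P$ and an extension $\psi$ of $\phi$ such that $V(K)$ is $(L,\psi)$-inert in $G$ and every component of $G\setminus K$ has a Thomassen outer face with respect to $L_\psi$.

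Finally, each component of $G\setminus K$ is $L_\psi$-colorable by Theorem~\ref{thomassen5ChooseThm}; combining all these colorings with $\psi$ and using the inertness of $V(K)$ extends to a full $L$-coloring of $G$, contradicting the choice of $G$. The main obstacle will be verifying hypothesis 1) of Theorem~\ref{Main4CycleAnnulusThm}: the inductive step requires framing the restriction of $G$ to a ball around $C_1\cup C_2\cup P$ as itself an instance of the cylinder theorem, which is delicate because cutting along the ball boundary may introduce new facial cycles whose precolorings are not automatically available from $\phi$. Resolving this likely requires either a strengthened inductive hypothesis that allows longer precolored boundary paths (in the spirit of Theorem~\ref{5ListHighRepFacesFarMainRes}) or a separate exponential-growth lemma showing that precoloring constraints decay rapidly as one moves away from $C_1\cup C_2$.
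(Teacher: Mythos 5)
The paper does not give a proof of Theorem~\ref{cylindertheorem}: it is cited as a known result of Postle and Thomas from \cite{HyperbolicFamilyColorSurfPap} and is used purely as a black box (for example at the end of the proof of Proposition~\ref{CriticalLemmaSep}). So there is no internal proof to compare against, and even a correct argument here would be an alternative, self-contained route rather than a reconstruction of what the paper does.

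As written, your proposal has a genuine gap, and it is essentially the one you half-flag at the end. Hypothesis~1) of Theorem~\ref{Main4CycleAnnulusThm} requires that $\phi$ extend to an $L$-coloring of $B_{r(n-1)}(C_1\cup C_2\cup P)$. If this ball is all of $V(G)$ --- which can certainly happen, e.g.\ when $G$ is a cylindrical grid of width roughly $\gamma$ and hence of diameter $O(\gamma\log\gamma)$ --- then the hypothesis is literally the statement you are trying to prove, minimality gives nothing, and your remark ``then we are done directly'' is circular. This is precisely why the paper's own invocation of Theorem~\ref{Main4CycleAnnulusThm} is preceded, in part~a) of Subclaim~\ref{SigmaExtFromDD'Inwards}, by an argument that a far-away facial cycle $X$ forces the ball to be a \emph{proper} subgraph; in the cylinder setting there is no such $X$, so nothing rules out the all-of-$G$ case, and it remains fatal. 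Even in the proper-ball case your description is off: the vertices on the boundary of a metric ball in a planar graph do not form ``a short cycle bounding the complement'' (the boundary can be long and disconnected), and there is no $\gamma-O(\log\gamma)$ loss to worry about. The repair there is to reverse the order: assume WLOG that $L$ is singleton on $V(C_1\cup C_2)$, $L$-color $G[B_{r(n-1)}(C_1\cup C_2\cup P)]$ directly by minimality (it still has $C_1,C_2$ as faces at distance $\geq\gamma$ with $5$-lists elsewhere), and only then take $\phi$ to be the restriction of that coloring to $V(C_1\cup C_2\cup P)$. Finally, your short-inseparability reduction also needs more care: when a separating $4$-cycle $D$ separates $C_1$ from $C_2$, coloring the side containing $C_1$ fixes a coloring of $D$, but the other side then has $D$ and $C_2$ precolored with $d(D,C_2)$ possibly much less than $\gamma$, so minimality does not apply to it without further argument.
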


To prove Theorem \ref{5ListHighRepFacesFarMainRes}, we first define a slightly modified version of face-width which is easier to work with for our purposes. The issue we encounter with imposing face-width bounds on a minimal counterexample $G$ to Theorem \ref{5ListHighRepFacesFarMainRes} is that we need to exploit the presence of separating cycles of length at most four in $G$, but given a contractible 4-cycle $D\subseteq G$ and a component $U$ of $\Sigma\setminus D$, the embedding $G\cap\textnormal{Cl}(U)$ possibly has face-width $\textnormal{fw}(G)-1$. We get around this by defining the following. 

\begin{defn}\label{ModifiedFwStarDefn} \emph{Let $G$ be an embedding on a surface $\Sigma$. We define a $G$-\emph{covering triple} to be a triple $(N, \mathcal{A}, \mathcal{B})$, where $N\subseteq\Sigma$ is a noncontractible cycle and $\mathcal{A}, \mathcal{B}$ are collections of subgraphs of $G$, and furthermore, there is a family $\{U_F: F\in\mathcal{A}\cup\mathcal{B}\}$ of open subsets of $\Sigma$ such that:}
\begin{enumerate}[label=\alph*)]
\itemsep-0.1em
\item $N\subseteq\bigcup_{F\in\mathcal{A}\cup\mathcal{B}}\textnormal{Cl}(U_F)$; AND
\item For each $F\in\mathcal{A}$, $F$ is a facial subgraph of $G$ and $U_F$ is a component of $\Sigma\setminus G$ with $\partial(U_F)=F$; AND
\item For each $F\in\mathcal{B}$, $F$ is a contractible cycle of length at most four, and $U_F$ is both an open disc and a component of $\Sigma\setminus F$ .
\end{enumerate}
\emph{We define $\textnormal{fw}^*(G)$ to be the minimum of $|\mathcal{A}\cup\mathcal{B}|$ taken over all $G$-covering pairs. If there are no noncontractible closed curves, i.e $\Sigma=\mathbb{S}^2$, then we set $\textnormal{fw}^*(G)=\infty$. }\end{defn}

The usefulness of the definition above for our purposes lies in the following proposition. 

\begin{prop}\label{FWstarToFWObs} Let $G$ be an embedding on a surface $\Sigma$. Then the following hold:
\begin{enumerate}[label=\Alph*)]
\itemsep-0.1em
\item $\textnormal{fw}(G)\geq\textnormal{fw}^*(G)\geq\textnormal{fw}(G)/5$; AND 
\item Let $D\subseteq G$ be a contractible cycle of length at most four, $U$ be a components of $\Sigma\setminus D$, and $G':=G\cap\textnormal{Cl}(U)$. Then $\textnormal{fw}^*(G')\geq\textnormal{fw}^*(G)$. 
\end{enumerate}
 \end{prop}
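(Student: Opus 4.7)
The plan is to handle the two parts separately. For Part A), the forward inequality $\textnormal{fw}(G)\geq\textnormal{fw}^*(G)$ is immediate: given any noncontractible closed curve $N$ realizing $k:=|N\cap G|=\textnormal{fw}(G)$, the $k$ intersection points partition $N$ into $k$ arcs, each contained in the closure of a unique face of $G$. Taking $\mathcal{A}$ to be the set of faces visited in this way and $\mathcal{B}:=\varnothing$, we obtain a $G$-covering triple of size at most $k$. For the reverse inequality $\textnormal{fw}^*(G)\geq\textnormal{fw}(G)/5$, we would construct, from any $G$-covering triple $(N,\mathcal{A},\mathcal{B})$, a noncontractible closed curve $N^*$ with $|N^*\cap G|\leq 5|\mathcal{A}\cup\mathcal{B}|$. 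Build an auxiliary graph $H$ on vertex set $\mathcal{A}\cup\mathcal{B}$ with edges between pairs $F,F'$ whose closures $\textnormal{Cl}(U_F),\textnormal{Cl}(U_{F'})$ meet; then $N$ induces a closed walk in $H$ whose image in $\Sigma$ is homotopically nontrivial. Decomposing the walk into simple cycles and using that at least one sub-cycle carries a noncontractible homotopy class, extract a simple cycle of length at most $|\mathcal{A}\cup\mathcal{B}|$. Route $N^*$ through the corresponding closures one at a time: each $F\in\mathcal{A}$ contributes at most $2$ transverse crossings of $G$ (entry and exit at the facial boundary $F$), while each $F\in\mathcal{B}$ contributes at most $|F|\leq 4$ transverse crossings of edges of the cycle $F$ plus at most one additional transition crossing, by detouring $N^*$ just outside $U_F$ along the 4-cycle $F$ rather than through the disc's interior. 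This yields at most $5|\mathcal{A}\cup\mathcal{B}|$ intersections.

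For Part B), the plan is to construct from any $G'$-covering triple $(N',\mathcal{A}',\mathcal{B}')$ a $G$-covering triple of the same size, focusing on the case (implicitly the one of interest in the paper's reduction arguments) in which $U$ is the component of $\Sigma\setminus D$ whose complement $\Sigma\setminus\textnormal{Cl}(U)$ is an open disc bounded by $D$. Let $F_*$ denote the unique face of $G'$ whose open region contains $\Sigma\setminus\textnormal{Cl}(U)$; then $\partial F_*=D$ and $\textnormal{Cl}(F_*)=\textnormal{Cl}(U_D)$, where $U_D:=\Sigma\setminus\textnormal{Cl}(U)$ is precisely the open disc component of $\Sigma\setminus D$ needed in order to use $D$ as an element of $\mathcal{B}$ in a $G$-covering triple. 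If $F_*\not\in\mathcal{A}'$, then every element of $\mathcal{A}'$ is already a facial subgraph of $G$ (since away from $F_*$, the faces of $G$ in $\textnormal{Cl}(U)$ coincide with the faces of $G'$), so $(N',\mathcal{A}',\mathcal{B}')$ is itself a valid $G$-covering triple. If $F_*\in\mathcal{A}'$, remove $F_*$ from $\mathcal{A}'$ and add $D$ to $\mathcal{B}'$ with $U_D$ as above; the resulting triple has the same size and the same union of closed covering regions. Finally, $N'$ remains noncontractible in $\Sigma$: any null-homotopy in $\Sigma$ would be forced to traverse the open disc $\Sigma\setminus\textnormal{Cl}(U)$, but a null-homotopy of a closed curve in $\textnormal{Cl}(U)$ can be arranged to stay within $\textnormal{Cl}(U)$, contradicting the noncontractibility of $N'$.

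The main obstacle I foresee is bookkeeping the constant $5$ in Part A)'s reverse direction for members of $\mathcal{B}$, since a naive routing of $N^*$ through the interior of the disc $U_F$ bounded by a 4-cycle $F$ may cross an unbounded number of interior edges of $G$ rooted at vertices of $F$. The crucial point is that we are not obliged to keep $N^*$ inside $\bigcup_F\textnormal{Cl}(U_F)$: we may detour $N^*$ just outside $\textnormal{Cl}(U_F)$ in a tubular neighborhood of the cycle $F$, incurring at most $|F|\leq 4$ transverse crossings of the edges of $F$ itself and avoiding the disc's interior entirely. Combined with the at-most-one crossing needed at each transition between consecutive regions along the extracted simple cycle in $H$, this delivers the constant $5$ per member of $\mathcal{A}\cup\mathcal{B}$ and completes the bound.
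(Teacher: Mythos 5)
Your Part A forward inequality and your Part B argument are both fine. For Part B the paper just says it is immediate, and your write-up supplies the missing details correctly: away from $D$ the faces of $G'$ inside $\textnormal{Cl}(U)$ coincide with faces of $G$, and the one face $F_*$ of $G'$ with $\partial(F_*)=D$ can, if used, be traded into $\mathcal{B}$ with the disc $\Sigma\setminus\textnormal{Cl}(U)$ as $U_D$.

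For the reverse inequality in Part A you take a genuinely different route from the paper, and the route as written has a gap. The paper never constructs a curve: it proves, by an exchange argument, that $|\mathcal{A}|+4|\mathcal{B}|\geq\textnormal{fw}(G)$ for every $G$-covering triple. Taking a triple minimizing $|\mathcal{A}|+4|\mathcal{B}|$ and, among those, minimizing $|\mathcal{B}|$, one replaces any $T\in\mathcal{B}$ by the at most four faces of $G$ that border $T$ from its non-disc side (homotoping the portion of $N$ inside the disc $\textnormal{Cl}(U_T)$ onto $T$); this gives another covering triple of no larger weight but strictly smaller $|\mathcal{B}|$, a contradiction unless $\mathcal{B}=\varnothing$, in which case $|\mathcal{A}|\geq\textnormal{fw}(G)$ is the standard face-width fact. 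Dividing by $5$ via $5\max\{|\mathcal{A}|,|\mathcal{B}|\}\geq|\mathcal{A}|+4|\mathcal{B}|$ and $|\mathcal{A}\cup\mathcal{B}|\geq\max\{|\mathcal{A}|,|\mathcal{B}|\}$ finishes. Your proposal instead tries to build a noncontractible curve $N^*$ and count crossings region by region, but the count for $\mathcal{B}$-regions is wrong: a curve detouring ``just outside $U_F$'' in a tubular neighborhood of the $4$-cycle $F$ does not cross edges of $F$ (those edges separate the inside of $U_F$ from the outside, and the detour stays entirely on the outside); what it crosses are the edges of $G$ incident to vertices of $F$ from the non-$U_F$ side, and there is no a priori bound on their number, since it depends on the degrees of the vertices of $F$. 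To repair this you would need to route $N^*$ through the at most four faces of $G$ adjacent to $F$ from outside $U_F$, transitioning at the vertices of $F$; but at that point you are essentially redoing the paper's $T\mapsto\mathcal{D}$ replacement at the level of curves rather than at the level of covering triples, where it is cleaner since one never has to track individual crossings at all.
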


\begin{proof} B) is immediate so we just prove A). If $\Sigma=\mathbb{S}^2$, then we are done, so suppose $\Sigma\neq\mathbb{S}^2$. 

\begin{claim}\label{ForCovTripleWeightBound} For any covering triple $(N, \mathcal{A}, \mathcal{B})$, we have $|\mathcal{A}|+4|\mathcal{B}|\geq\textnormal{fw}(G)$. \end{claim}

\begin{claimproof} Let $m$ be the minimum value of  $4|\mathcal{A}|+|\mathcal{B}|$ taken over all covering triples $(N, \mathcal{A}, \mathcal{B})$, and let $\textbf{Cov}_m$ be the set of covering triples $(N, \mathcal{A}, \mathcal{B})$ such that $|\mathcal{A}|+4|\mathcal{B}|=m$. By definition, $\textbf{Cov}_m\neq\varnothing$. It suffices to prove that $m\geq\textnormal{fw}(G)$. Suppose not. Among the elements of $\textbf{Cov}_m$, we choose $(N, \mathcal{A}, \mathcal{B})$ so that $|\mathcal{B}|$ is minimized. Now, if $\mathcal{B}=\varnothing$, then $|\mathcal{A}|=m$, so $m\geq\textnormal{fw}(G)$. contradicting our assumption. Thus, there is a $T\in\mathcal{B}$. Since $\Sigma\neq\mathbb{S}^2$, there is a unique component $U$ of $\Sigma\setminus T$ which is an open disc. Let $U'$ denote the other component of $\Sigma\setminus T$ and let $G':=G\cap\textnormal{Cl}(U')$. Let $\mathcal{A}':=\{F\in\mathcal{A}:\textnormal{Cl}(U_F)\subseteq G'\}$ and $\mathcal{B}':=\{F\in\mathcal{B}: \textnormal{Cl}(U_F)\subseteq G'\}$. Note that $T\not\in\mathcal{B}'$, since $U'$ is not a disc. Since $U$ is a disc, there is a noncontractible cycle $N'\subseteq\textnormal{Cl}(U')$ such that $N'\subseteq\partial(T)\cup\bigcup_{F\in\mathcal{A}'\cup\mathcal{B}'}\textnormal{Cl}(U_F)$, so $(N', \mathcal{A}', \mathcal{B}'\cup\{T\})$ is also a covering triple. On the other hand, there is a family $\mathcal{D}$ of at most four facial subgraphs of $G$ containing $T$ and a family $\{U_D: D\in\mathcal{D}\}$ of components of $\Sigma\setminus G$, where $\partial(U_D)=D$ for each $D\in\mathcal{D}$, such that $|\mathcal{D}|\leq 4$ and $\partial(T)\subseteq\bigcup_{D\in\mathcal{D}}D$. Thus, $(N', \mathcal{A}'\cup\mathcal{D}, \mathcal{B}')$ is also a covering triple. By the minimality of $m$, we have $m=(|\mathcal{A}|+4)+4(|\mathcal{B}|-1)\geq|\mathcal{A}'\cup\mathcal{D}|+4|\mathcal{B}'|\geq m$. Since $|\mathcal{B}'|<|\mathcal{B}|$, this contradicts the minimality of $|\mathcal{B}|$. \end{claimproof}

Now, it is immediate that $\textnormal{fw}(G)\geq\textnormal{fw}^*(G)$. By definition, there is a covering triple $(N, \mathcal{A}, \mathcal{B})$ with $|\mathcal{A}\cup\mathcal{B}|=\textnormal{fw}^*(G)$, so, by Claim \ref{ForCovTripleWeightBound}, we have $\textnormal{fw}^*(G)\geq\max\{|\mathcal{A}|, |\mathcal{B}|\}\geq (|\mathcal{A}+4|\mathcal{B}|)/5\geq\textnormal{fw}^*(G)/5$.  \end{proof}

To prove Theorem \ref{5ListHighRepFacesFarMainRes}, we prove that it holds in the chord-triangulated case. We note below in Proposition \ref{SuffAllTilColProp} that this is actually sufficient. Note that, in the setting below, we have $\alpha=2^{\Omega(g)}$, where the coefficient of $g$ in the exponent is a constant depending on $\gamma$. 

\begin{defn}\label{TilingDefnM} \emph{A \emph{tiling} is a chord-triangulated chart $(\Sigma, G, \mathcal{C}, L, C_*)$ such that, letting $g:=g(\Sigma)$ and $\beta, \gamma$ be constants, where $\gamma$ is as in Theorem \ref{cylindertheorem} and $\beta\geq\max\{4\cdot 10^6, \gamma\}$, and defining $\delta=\delta(g):=2.1\beta\cdot 6^{g}$ and $\alpha=\alpha(g):=7\delta\log_2(\delta)+3\gamma$, the following hold:}
\begin{enumerate}[label=\arabic*)]
\itemsep-0.1em
\item\emph{$(\Sigma, G, \mathcal{C}, L, C_*)$ is a $(\alpha, 4)$-chart with $\textnormal{fw}^*(G)\geq \delta$}; AND
\item \emph{For each $H\in C$, either $\mathbf{P}_H$ is a path of length at most one or $H$ is a cycle of length at most four with $\mathbf{P}_H=H$.}
\end{enumerate}
\end{defn}

\begin{prop}\label{SuffAllTilColProp} If all tilings are colorable, then Theorem \ref{5ListHighRepFacesFarMainRes} holds. \end{prop}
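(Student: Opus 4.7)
The plan is to reduce a hypothetical counterexample of Theorem \ref{5ListHighRepFacesFarMainRes} directly to a tiling; since tilings are assumed colorable, this would yield the desired contradiction. We may assume $m \geq 1$, because if no precolored subgraphs are present, Theorem \ref{5ListHighRepFacesFarMainRes} reduces to a standard 5-choosability theorem for high-face-width embeddings which does not require the tiling machinery.

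Given any counterexample $(\Sigma, G, L, F_1, \ldots, F_m, x_1y_1, \ldots, x_my_m)$ to Theorem \ref{5ListHighRepFacesFarMainRes}, I would form the chart $\mathcal{T} := (\Sigma, G, \mathcal{C}, L, F_1)$ with $\mathcal{C} := \{F_1, \ldots, F_m\}$ and $\mathbf{P}_{\mathcal{T}, F_i} := x_iy_i$. Each $x_iy_i$ is a single-edge $L$-colorable path; combined with the standing hypotheses that vertices outside $\bigcup_i V(F_i)$ have 5-lists and that $V(F_i) \setminus \{x_i, y_i\}$ has 3-lists, the conditions of Definition \ref{MainChartDefn} are satisfied. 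The hidden constants in the bounds $\textnormal{fw}(G) \geq 2^{\Omega(g(\Sigma))}$ and $d_G(F_i, F_j) \geq 2^{\Omega(g(\Sigma))}$ can be chosen so that $\textnormal{fw}(G) \geq 5\delta(g)$ and $d_G(F_i, F_j) \geq \alpha(g)$; then Proposition \ref{FWstarToFWObs}(A) guarantees $\textnormal{fw}^*(G) \geq \delta(g)$. Since each $\mathbf{P}_{\mathcal{T}, F_i}$ is a path of length at most one, $\mathcal{T}$ is an $(\alpha, 4)$-chart satisfying clauses 1) and 2) of Definition \ref{TilingDefnM}.

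The only remaining ingredient for $\mathcal{T}$ to be a tiling is chord-triangulation. For this, I would apply Lemma \ref{TriangulationCorMainLmemmaused1} with parameter $\alpha$ to obtain an embedding $G'$ on $\Sigma$, obtained from $G$ by adding edges only, such that every $F_i$ remains a facial subgraph of $G'$, pairwise distances among the $F_i$ are preserved, $\textnormal{fw}(G') = \textnormal{fw}(G)$, and every facial subgraph of $G'$ not in $\mathcal{C}$ is chord-triangulated. Because $G'$ is obtained from $G$ purely by edge-addition, any $L$-coloring of $G'$ restricts to an $L$-coloring of $G$; in particular, $G'$ with the same list-assignment and precolored data is still not $L$-colorable. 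Replacing $G$ by $G'$ in $\mathcal{T}$ then produces a chord-triangulated chart satisfying all conditions of Definition \ref{TilingDefnM}, i.e., a tiling whose underlying graph is not $L$-colorable, contradicting the standing assumption.

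The main bookkeeping point is verifying that the $2^{\Omega(g)}$ bounds of Theorem \ref{5ListHighRepFacesFarMainRes} are strong enough to force the concrete inequalities $\textnormal{fw}^*(G) \geq \delta(g)$ and $d(F_i, F_j) \geq \alpha(g)$. Since $\delta(g) = 2.1\beta \cdot 6^g$ and $\alpha(g) = 7\delta(g)\log_2\delta(g) + 3\gamma$ are each of the form $2^{O(g)}$, a suitable choice of the hidden multiplicative constant in the $2^{\Omega(g)}$ exponent makes both inequalities trivial. Notably, no separating-cycle or short-inseparability argument is required in this step: short-inseparability is not part of the definition of a tiling, and is handled internally in the subsequent proof that tilings are colorable.
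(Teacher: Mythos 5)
Your proposal is correct and takes essentially the same route as the paper: form a chart from the given data, apply Lemma \ref{TriangulationCorMainLmemmaused1} to chord-triangulate while preserving face-width, designated faces, and pairwise distances, then invoke Proposition \ref{FWstarToFWObs}(A) to pass from $\textnormal{fw}(G')\geq 5\delta$ to $\textnormal{fw}^*(G')\geq\delta$, concluding the result is a tiling. The only cosmetic difference is that you phrase it as a proof by contradiction starting from a hypothetical counterexample and note that edge-addition preserves non-colorability, while the paper states it positively; your extra remark about the $m=0$ case is a minor point the paper leaves implicit, and it could be handled even more simply by appending an arbitrary face with an arbitrary edge to $\mathcal{C}$ rather than appealing to a separate 5-choosability theorem.
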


\begin{proof} Suppose all tilings are colorable. Let $\mathcal{T}=(\Sigma, G, \mathcal{C}, L, C_*)$ be a (not necessarily chord-triangulated) chart such that, letting $g:=g(\Sigma)$, letting $\alpha, \beta, \gamma, \delta$ be as in Definition \ref{TilingDefnM}, where $\alpha, \delta$ depend on $g$, the following hold. 
\begin{enumerate}[label=\roman*)]
\itemsep-0.1em
\item\emph{$(\Sigma, G, \mathcal{C}, L, C_*)$ is a $(\alpha, 4)$-chart with $\textnormal{fw}(G)\geq 5\delta$}; AND
\item \emph{For each $H\in C$, either $\mathbf{P}_H$ is a path of length at most one or $H$ is a cycle of length at most four with $\mathbf{P}_H=H$.}
\end{enumerate}
It suffice to prove that $G$ is $L$-colorable. By Lemma \ref{TriangulationCorMainLmemmaused1}, there is an embedding $G'$ obtained from $G$ by adding edges to $G$ such that $\textnormal{fw}(G')\geq 5\delta$, where $G'$ is chord-triangulated and $\mathcal{T}'=(\Sigma, G', \mathcal{C}, L, C_*)$ is an $(\alpha, 4)$-chart. Note that $\mathcal{T}'$ still satisfies ii) above. By A) of Proposition \ref{FWstarToFWObs}, $\textnormal{fw}^*(G')\geq\delta$, so $\mathcal{T}'$ is a tiling and $G'$ is $L$-colorable, so $G$ is $L$-colorable as well. \end{proof}

\section{Properties of Critical Tilings}\label{MainRedThmForTessSec}

We prove the following in the remainder of this paper:

\begin{theorem}\label{ShortReductionTheoremfirst} 
All tilings are colorable. 
\end{theorem}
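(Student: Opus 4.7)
The plan is to argue by minimal counterexample and reduce to the short-inseparable case so that Theorem \ref{PaIIBlackBoxTessMain} can be invoked as a black box. Suppose some tiling is not $L$-colorable, and let $\mathcal{T}=(\Sigma,G,\mathcal{C},L,C_{*})$ be a counterexample with $|V(G)|$ minimum and, subject to that, $|E(G)|$ minimum. Since $\mathcal{T}$ is a tiling we have $\textnormal{fw}^{*}(G)\geq\delta=2.1\beta\cdot 6^{g}$, and Proposition \ref{FWstarToFWObs} gives $\textnormal{fw}(G)\geq\delta$; in particular every cycle of length at most four in $G$ is contractible, so $\textnormal{ew}(G)>4$. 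The distance parameter $\alpha=7\delta\log_{2}(\delta)+3\gamma$ comfortably exceeds the threshold $\delta$ required by Theorem \ref{PaIIBlackBoxTessMain}, and the condition on each $\mathbf{P}_{H}$ in the definition of a tiling matches the hypothesis of that theorem verbatim. Consequently, the only obstacle to applying Theorem \ref{PaIIBlackBoxTessMain} directly to $\mathcal{T}$ is the possible presence of a separating cycle of length $3$ or $4$ in $G$.

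After the standard connectivity reductions (if $G$ is not $2$-connected, split at a cut-vertex; each block is a smaller tiling, and the pieces glue back together via Theorem \ref{thomassen5ChooseThm}), the central task is to rule out separating $3$- and $4$-cycles. Suppose such a separating cycle $D$ exists, and let $\{G_{0},G_{1}\}$ be the natural $D$-partition; since $D$ is contractible, exactly one side, say $G_{0}$, lies inside a closed disc of $\Sigma$. Split into two cases. In the \emph{far case}, where $D$ has distance at least $\alpha$ from every $H\in\mathcal{C}$ that meets $G_{1}$, one promotes $D$ to a new precolored cycle: choose any $L$-coloring of $V(D)$, let $\mathcal{C}_{1}$ consist of the elements of $\mathcal{C}$ lying in $G_{1}$ together with $D$, and observe that $(\Sigma,G_{1},\mathcal{C}_{1},L,C_{*})$ is a strictly smaller tiling (since $G_{0}\setminus D$ is nonempty) whose chart and distance hypotheses all pass by construction; by minimality it is $L$-colorable, and Corollary \ref{CycleLen4CorToThom} extends the resulting coloring through the planar disc $G_{0}$, a contradiction. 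In the \emph{close case}, where $D$ is within distance $\alpha$ of some $H\in\mathcal{C}$, Theorem \ref{Main4CycleAnnulusThm} is the main tool. Take a shortest $(H,D)$-path $P$ in $G$, and work in the planar region containing $H$, $P$, and $D$. After showing that an appropriately chosen precoloring of $V(F\cup F'\cup P)$ (with $F,F'\in\{H,D\}$) extends to $B_{r(n-1)}(F\cup F'\cup P)$, Theorem \ref{Main4CycleAnnulusThm} yields a $2$-edge-connected $K\supseteq F\cup F'\cup P$ with an inert partial $L$-coloring $\psi$ whose residual components have Thomassen outer faces with respect to $L_{\psi}$. Replacing $H$ and $D$ by this new family of Thomassen facial subgraphs yields a strictly smaller chart on $\Sigma$, and since the radius $rn$ of the region absorbed into $K$ is much smaller than $\alpha$, the distance condition for the remaining elements of $\mathcal{C}$ is preserved; minimality then colors this smaller tiling, and the inertness of $\psi$ lifts the coloring back to all of $G$.

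The main obstacle will be the close case, for two reasons. First, verifying the hypothesis that the chosen precoloring extends to the $r(n-1)$-ball of $F\cup F'\cup P$ requires an iterative construction: one peels off thin annular layers around the already-colored set, using the $5$-list condition on vertices outside $\bigcup_{H\in\mathcal{C}}V(H)$ together with Theorems \ref{thomassen5ChooseThm} and \ref{cylindertheorem} to extend the coloring one annulus at a time, with the chord-triangulated property of the tiling (and Lemma \ref{TriangulationCorMainLmemmaused1} used locally if needed) ensuring that each new layer meets the planar-triangulation hypotheses of these auxiliary theorems. Second, and equally subtle, is the distance bookkeeping after Theorem \ref{Main4CycleAnnulusThm} is applied: the newly-created Thomassen outer faces inherit shortcuts through the old $H$ and $D$, so one must verify that no two elements of the new family are brought within distance less than $\alpha$; this is where the generous choice $\alpha=7\delta\log_{2}(\delta)+3\gamma\gg rn$ is crucial. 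Iterating over all separating $3$- and $4$-cycles strictly decreases $|V(G)|+|E(G)|$ at each step, so after finitely many reductions the resulting tiling is short-inseparable, at which point Theorem \ref{PaIIBlackBoxTessMain} produces the desired $L$-coloring and contradicts the choice of counterexample.
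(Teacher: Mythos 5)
Your proposal shares the paper's broad reduction goal (get into a position where Theorem \ref{PaIIBlackBoxTessMain} can be applied, with Theorem \ref{Main4CycleAnnulusThm} as the tool for handling separating short cycles that sit near $\mathcal{C}$), but its case analysis is structured very differently from the paper's, and as written it has several concrete gaps.

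First, the ``far case'' does not occur in the form you need. The paper proves (part 2 of Lemma \ref{SsepclaimAndAlmostTriang}) that in a critical tiling \emph{every} $D\in\textit{Sep}(G)$ separates $C_*$ from some element of $\mathcal{C}\setminus\{C_*\}$. So the disc side $G_0$ of $D$ always contains a precolored face, and Corollary \ref{CycleLen4CorToThom} cannot be used to push a coloring from $G_1$ into $G_0$, since $G_0\setminus D$ has vertices with lists of size $<5$. Your far case, as stated, is vacuous or fails at exactly this step.

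Second, your close case applies Theorem \ref{Main4CycleAnnulusThm} to the pair $(H,D)$ with $H\in\mathcal{C}$. But Theorem \ref{Main4CycleAnnulusThm} requires both $F$ and $F'$ to be facial cycles of length at most four; in a tiling $H$ may instead be an arbitrarily long cycle with a precolored path $\mathbf{P}_H$ of length $\leq 1$. The paper never applies Theorem \ref{Main4CycleAnnulusThm} to a $\mathcal{C}$-element: in the proof of Proposition \ref{CriticalLemmaSep} it applies the theorem to a pair of separating cycles $D,D'$ of length $\leq 4$ with the nearby $\mathcal{C}$-element kept strictly inside $\textnormal{Int}(D')$. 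Third, Theorem \ref{Main4CycleAnnulusThm} presupposes a 2-connected short-inseparable planar graph; ``the planar region containing $H$, $P$, $D$'' is not short-inseparable a priori --- producing such a region is the hard part, and in the paper it relies on the $\mathcal{C}$-close notion, the equivalence relation $\sim$, and Propositions \ref{equivclassobs}, \ref{RedBlueInequality}, \ref{equivrep114} to select representatives $\mathcal{M}_D$ whose annulus $A(D\mid\mathcal{M}_D)$ is short-inseparable. Your proposal contains none of this machinery, and your ``peel off thin annular layers'' sketch for the $B_{r(n-1)}$ hypothesis is not a substitute: the paper verifies that hypothesis by deleting vertices outside a bounded ball, re-triangulating via Lemma \ref{TriangulationCorMainLmemmaused1}, and invoking minimality on the strictly smaller tiling (Subclaim \ref{SigmaExtFromDD'Inwards}). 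Finally, note that the paper's argument is not an iterative ``remove separating cycles one at a time'' reduction; it is a single structural decomposition: Proposition \ref{CriticalLemmaSep} shows every separating short cycle is $\mathcal{C}$-close, then the maximal ones are grouped by $\sim$, a short-inseparable annulus $A(C_*\mid\mathcal{M}_*)$ around $C_*$ is extracted, the interiors of $\mathcal{M}_*$ are colored by Proposition \ref{equivclassobs}, and the annulus is colored via Proposition \ref{MainFaceCycleLemma1}/Theorem \ref{PaIIBlackBoxTessMain}. Your iterated-reduction strategy would also need to justify that the new Thomassen outer faces (which are arbitrary facial subgraphs with lists of size $\geq 3$, not cycles of length $\leq 4$) still satisfy the tiling hypotheses of Definition \ref{TilingDefnM}, and that applying Theorem \ref{Main4CycleAnnulusThm} once actually removes \emph{all} problematic short separating cycles rather than leaving smaller components that remain non-short-inseparable.
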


We fix constants $\beta, \gamma$ and functions $\alpha(g)$ and $\delta(g)$ as in Definition \ref{TilingDefnM}. In the analysis below, whenever the surface is clear from the context, then we just write $\alpha, \delta$ in place of $\alpha(g)$ and $\delta(g)$ respectively to avoid clutter.

\begin{defn} \emph{A tiling $\mathcal{T}=(\Sigma, G, \mathcal{C}, L, C_*)$ is called \emph{critical} if $\mathcal{T}$ is not colorable, and, for any tiling $\mathcal{T}'=(\Sigma', G', \mathcal{C}', L', C_*')$, the following hold:}
\begin{enumerate}[label=\emph{\arabic*)}]
\itemsep-0.1em
\item\emph{If $|V(G')|<|V(G)|$, then $\mathcal{T}'$ is colorable}; AND
\item\emph{If $|V(G')|=|V(G)|$ and $|E(G)|<|E(G')|$, then $\mathcal{T}'$ is colorable.}
\end{enumerate}
\end{defn}

Over Sections \ref{MainRedThmForTessSec}-\ref{thisisidicritCOMPcomcriTT}, we prove that no critical tilings exist. First, in Section \ref{MainRedThmForTessSec}, we gather some basic properties of critical tilings. We first introduce one more natural piece of notation. Given a chart $\mathcal{T}=(\Sigma, G, \mathcal{C}, L, C_*)$ and a subgraph $H$ of $G$, we define $\mathcal{C}^H:=\{C\in\mathcal{C}: C\subseteq H\}$.

\begin{lemma}\label{GeneralizedShortPathObsx}
Let $\mathcal{T}=(\Sigma, G, \mathcal{C}, L, C_*)$ be a critical tiling. Then the following hold $G$ is connected and $G\neq C_*$, and each element of $\mathcal{C}$ is a cycle.
\end{lemma}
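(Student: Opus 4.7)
The plan is to treat the three assertions separately, each time exploiting the criticality of $\mathcal{T}$.

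First, for $G \neq C_*$, I would suppose $G = C_*$. The distance hypothesis $d(H_1,H_2) \geq \alpha$ from the chart definition forces $\mathcal{C} = \{C_*\}$ (since any other $H \in \mathcal{C}$ would be contained in $G = C_*$ and hence have distance zero from $C_*$). By the tiling conditions, either $C_*$ is a cycle of length at most four with $\mathbf{P}_{C_*} = C_*$ — in which case $V(\mathbf{P}_{C_*}) = V(G)$ is $L$-colorable by hypothesis and we are done — or $\mathbf{P}_{C_*}$ is a path of length at most one and every vertex of $V(C_*) \setminus V(\mathbf{P}_{C_*})$ has an $L$-list of size at least three. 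In the latter case I would take an $L$-coloring $\phi$ of $V(\mathbf{P}_{C_*})$ and greedily extend along the facial walk of $C_*$, using that each uncolored vertex has at most two already-colored neighbors. The resulting $L$-coloring of $G$ contradicts uncolorability of $\mathcal{T}$.

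Second, for $G$ being connected, I would suppose $G$ has a component $H$ not containing $C_*$. Since $H$ and $G \setminus V(H)$ must share at least one face of $G$, I can pick $u \in V(H)$ and $v \in V(G)\setminus V(H)$ on the boundary of a common face $F$ and add the edge $uv$ inside $F$. Because $u,v$ lie in different components, $G+uv$ contains no new cycles, so the edge-width, the absence of short separating cycles, and the face-width (both $\textnormal{fw}$ and $\textnormal{fw}^*$, which can only go up under adding edges) are all preserved; the pairwise distances between elements of $\mathcal{C}$ are also preserved. Applying Lemma \ref{TriangulationCorMainLmemmaused1} to $G+uv$ I would obtain a chord-triangulated graph $G''$ with the same inter-cycle distances and the same face-width lower bound, underlying a tiling $\mathcal{T}''$ with $|V(G'')| = |V(G)|$ and $|E(G'')| > |E(G)|$. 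Edge-maximality of $\mathcal{T}$ then forces $\mathcal{T}''$ to be colorable, and any $L$-coloring of $G''$ restricts to one of $G$, the desired contradiction.

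Third, to show every $H \in \mathcal{C}$ is a cycle, I would suppose otherwise. A facial subgraph fails to be a cycle precisely when its boundary walk has a repeated vertex (a cut vertex of $H$) or when $H$ is itself disconnected. In either case I would add one or more edges inside the face $U$ bounded by $H$ to cut the facial walk into a proper cycle, obtaining a graph $G'$ with the same vertex set and $|E(G')| > |E(G)|$. Invoking Lemma \ref{TriangulationCorMainLmemmaused1} if chord-triangulation needs to be restored, I would verify that $G'$ underlies a tiling in which $H$ has been replaced by a genuine cycle, using the face-width bound $\textnormal{fw}^*(G) \geq \delta$ and the $\alpha$-spacing between the elements of $\mathcal{C}$ to rule out the creation of new short separating or noncontractible cycles. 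Edge-maximality of $\mathcal{T}$ then forces this larger tiling to be colorable, contradicting uncolorability of $\mathcal{T}$.

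The main obstacle will be the third claim. The difficulty is that adding edges to simplify the faulty facial walk must be done without inadvertently introducing a short separating cycle (which would violate short-inseparability), without dropping $\textnormal{fw}^*$ below $\delta$, and without obstructing chord-triangulation. All of these verifications rely essentially on the generous lower bounds $\textnormal{fw}^*(G) \geq \delta$ and the $\alpha$-spacing between elements of $\mathcal{C}$, and the most delicate case will be when the boundary walk of $H$ revisits a cut vertex lying close to $\mathbf{P}_H$, where the added edges risk interacting with the precolored structure.
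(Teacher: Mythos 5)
Your handling of $G\neq C_*$ and connectedness is fine; the paper dispatches both by vertex-minimality in a single line, but your versions also work.

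The real divergence is the third claim. The paper uses the cut vertex directly: a non-cycle $H\in\mathcal{C}$ forces a cut vertex $v\in V(H)$, giving $G=G_0\cup G_1$ with $G_0\cap G_1=\{v\}$ and, say, $\mathbf{P}_H\subseteq G_0$. Then $(\Sigma, G_0, \mathcal{C}^{\subseteq G_0}\cup\{H\cap G_0\}, L, H\cap G_0)$ is a smaller tiling, so by vertex-minimality there is an $L$-coloring $\phi$ of $G_0$; and $(\Sigma, G_1, \mathcal{C}^{\subseteq G_1}\cup\{H\cap G_1\}, L_{\phi}^v, H\cap G_1)$ is again a smaller tiling whose precolored subgraph of $H\cap G_1$ is just the pinned vertex $v$, so $\phi$ extends across $G_1$. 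This uses only the vertex-minimality half of criticality, never alters the face structure, and has essentially nothing delicate to verify.

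Your route --- adding edges inside the face bounded by $H$ to convert its boundary walk into a cycle and then invoking edge-maximality --- is plausible in outline, but the parts you call ``delicate'' are exactly the load-bearing ones, and they are left as sketches. In particular: (i) the natural bridging edge across the cut vertex produces a replacement facial cycle for $H$ that no longer contains two of $H$'s original edges, so if $\mathbf{P}_H$ is one of those edges you must pick a different bridge, and if $v$ is visited more than twice, or $H$ has several cut vertices, you must iterate the surgery while keeping $\mathbf{P}_H$ intact throughout; (ii) you need to re-verify the $\alpha$-spacing of $\mathcal{C}$ after the additions (it does survive, since the new edges are chords of $V(H)$ and every other element of $\mathcal{C}$ sits at distance at least $\alpha$ from $H$, but this has to be argued); and (iii) that $\textnormal{fw}^*$ does not drop is not automatic, because adding an edge inside a face changes the face set and hence which covering triples exist, so ``edges can only increase face-width'' needs justification under the $\textnormal{fw}^*$ definition. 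The paper's cut-vertex decomposition sidesteps all of (i)--(iii), which is why it is the intended and considerably cleaner route; I'd encourage you to look for the decomposition argument rather than trying to push the edge-addition surgery through.
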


\begin{proof} It is an immediate consequence of the minimality of $\mathcal{T}$ that $G$ is connected. Since $G$ is not $L$-colorable, $G\neq C_*$. Suppose now that there is an $H\in\mathcal{C}$ which is not a cycle. Thus, there is a $v\in V(H)$ which is a cut-vertex of $G$, and $G$ admits a partition $G=G_0\cup G_1$, where $G_0\cap G_1=v$, and $G_i-v\neq\varnothing$ for each $i=0,1$. Since $H$ is not a cycle, $\mathbf{P}_H$ is a path of length at most one, so, without loss of generality, let $\mathbf{P}_{H}\subseteq G_0$. Note that, for each $k=0,1$, we have $\textnormal{fw}^*(G_k)\geq \textnormal{fw}^*(G)$. Furthermore, the chart $(\Sigma, G_0, \mathcal{C}^{\subseteq G_0}\cup\{H\cap G_0\}, L, H\cap G_0)$ is still chord-triangulated, and thus a tiling. By the minimality of $\mathcal{T}$, there is an $L$-coloring $\phi$ of $G_0$. Now, $(\Sigma, G_1, \mathcal{C}^{\subseteq G_1}\cup\{H\cap G_1\}, L_{\phi}^v, H\cap G_1)$ is also a chord-triangulated chart, and thus a tiling as well. By minimality, $G_1$ is $L_{\phi}^v$-colorable, so $G$ is $L$-colorable, contradicting our assumption. \end{proof}

For the proof of Theorem \ref{ShortReductionTheoremfirst}, it is useful to regard a chart $(\Sigma, G, \mathcal{C}, L, C_*)$ as having an ``orientation" which is specified by $C_*$, ananlogous to the outer face of an embedding in $\mathbb{R}^2$, which is made precise by the following definitions. 

\begin{defn}\label{Def14InOutContract} \emph{Let $\mathcal{T}=(\Sigma, G, \mathcal{C}, L, C_*)$ be a chart where $C_*$ is a proper subgraph of $G$. Let $D\subseteq G$ be a contractible cycle, and let $U_0, U_1$ be the components of $\Sigma\setminus D$. Let $i\in\{0,1\}$ be the unique index such that both of the following hold: $C_*\subseteq\textnormal{Cl}(U_i)$ and, if $C_*=D$, then $G\cap\textnormal{Cl}(U_i)=C_*$. We let $\textnormal{Ext}_{\mathcal{T}}(D)=G\cap\textnormal{Cl}(U_i)$ and $\textnormal{Int}_{\mathcal{T}}(D)=G\cap\textnormal{Cl}(U_{1-i})$.}
 \end{defn}

We now introduce the following notation, which generalizes the notion of an annulus in a planar graph. 
\begin{defn}\label{GenAnnDefnM}\emph{Let $\Sigma$ be a surface.}
\begin{enumerate}[label=\arabic*)]
\itemsep-0.1em
\item Given an embedding $G$ be on $\Sigma$, we let $\mathit{Sep}(G)$ denote the set of separating cycles of length at most four in $G$.
\item Let $\mathcal{T}=(\Sigma, G, \mathcal{C}, L, C_*)$ be a chart with underlying surface $\Sigma$, where $G\neq C_*$. Let $D\subseteq G$ be a contractible cycle and $\mathcal{F}$ be a family of contractible cycles in $\textnormal{Int}_{\mathcal{T}}(D)$, we define $A_{\mathcal{T}}(D| \mathcal{F})$ to be the graph $$A_{\mathcal{T}}(D| \mathcal{F}):=\textnormal{Int}_{\mathcal{T}}(D)\cap\left(\bigcap_{F\in\mathcal{F}}\textnormal{Ext}_{\mathcal{T}}(F)\right)$$
In particular, if $\mathcal{F}=\varnothing$, then $A_{\mathcal{T}}(D|\mathcal{F})=\textnormal{Int}_{\mathcal{T}}(D)$.
\end{enumerate}
\end{defn}

When using the notation of Definitions \ref{Def14InOutContract} and \ref{GenAnnDefnM}, if the chart $\mathcal{T}$ is clear from the context, then we usually drop the subscript $\mathcal{T}$. At the end of Section \ref{MainRedThmForTessSec}, we  prove that a critical tiling $\mathcal{T}=(\Sigma, G, \mathcal{C}, C_*)$ has the property that every facial subgraph of $G$, except possibly those of $\mathcal{C}$, is a triangle. We first prove the following intermediate result. 

\begin{prop}\label{MainFaceCycleLemma1} 

Let $\mathcal{T}=(\Sigma, G, \mathcal{C}, L, C_*)$ be a critical tiling. Let $F_0$ be a contractible cycle in $G$, and $\mathcal{F}$ be a (possibly empty) collection of contractible cycles in $G$, where each $F\in\mathcal{F}$ is a subgraph of $\textnormal{Int}_{\mathcal{T}}(F_0)$. Let $A:=A_{\mathcal{T}}(F_0|\mathcal{F})$ and $\mathcal{F}^*:=(\mathcal{F}\cup\{F_0\})\setminus\mathcal{C}^{\subseteq A}$, and suppose that the following conditions hold. 
\begin{enumerate}[label=\arabic*)]
\item $A$ is short-inseparable and $\mathcal{C}^{\subseteq A}\subseteq\mathcal{F}$; AND
\item $3\leq |V(F)|\leq 4$ for each $F\in\mathcal{F}^*$. 
\end{enumerate}
Then every face of $A$, except possibly those of $\{F_0\}\cup\mathcal{F}$, is bounded by a triangle. Furthermore, if $d(F, F')\geq\delta$ for all $F\in\mathcal{F}^*$ and $F'\in\mathcal{F}^*\cup\mathcal{C}^{\subseteq A}$, then any $L$-coloring of $\bigcup_{F\in\mathcal{F}^*}V(F)$ extends to an $L$-coloring of $A$.\end{prop}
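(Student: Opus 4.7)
The plan is to prove the two parts in sequence, first establishing structurally that all interior faces of $A$ are triangles, and then using that structure together with the distance assumption to extend any coloring of the non-precolored bounding cycles.

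For the structural claim I would argue by contradiction. Let $H$ be a face of $A$ with $H\notin\{F_0\}\cup\mathcal{F}$. Since $A=\textnormal{Int}(F_0)\cap\bigcap_{F\in\mathcal{F}}\textnormal{Ext}(F)$, any face of $A$ other than one of the cutting cycles is also a face of $G$; and since $\mathcal{C}^{\subseteq A}\subseteq\mathcal{F}$, we have $H\notin\mathcal{C}$, so chord-triangulation of $\mathcal{T}$ gives that every induced cycle of $G[V(H)]$ is a triangle. First I would rule out the possibility that the facial walk of $H$ has a repeated vertex---which would produce a cut vertex of $A$ and hence a separating cycle of length at most four in $A$, contradicting short-inseparability---so $H$ is a cycle. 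If $|V(H)|\geq 4$, then $H$ has a chord $e$ in $G$. If $e$ lies in $A$, then $e$ together with a shortest arc of $H$ gives a contractible separating cycle of length at most four in $A$, again contradicting short-inseparability. If $e$ lies outside $A$, then both endpoints of $e$ lie on a common bounding cycle $F\in\{F_0\}\cup\mathcal{F}$; if $F\in\mathcal{F}^*$ one obtains a contradiction from condition 2 ($|V(F)|\leq 4$) and the facial status of $F$ in $G$, and if $F\in\mathcal{C}^{\subseteq A}$ then $e$ is a chord of $F$ lying inside $\textnormal{Int}(F)$, contradicting the fact that $F$ is a facial subgraph of $G$. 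Thus $H$ is a triangle.

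For the extension claim, fix an $L$-coloring $\phi$ of $\bigcup_{F\in\mathcal{F}^*}V(F)$. Since $A$ is short-inseparable with all non-boundary faces triangular and each $F\in\mathcal{F}^*$ has $|V(F)|\leq 4$, the geometry around $\mathcal{F}^*$ matches the hypothesis of Theorem \ref{Main4CycleAnnulusThm}. I would proceed by induction on $|\mathcal{F}^*|$: if $|\mathcal{F}^*|\geq 2$, pick $F,F'\in\mathcal{F}^*$ at minimum distance in $A$, let $P$ be a shortest $(F,F')$-path, and apply Theorem \ref{Main4CycleAnnulusThm} to obtain a 2-edge-connected subgraph $K\supseteq F\cup F'\cup P$ together with an extension $\psi$ of $\phi$ on $V(K)$ such that $V(K)$ is $(L,\psi)$-inert in $A$; each component of $A\setminus K$ then has a Thomassen facial outer face with respect to $L_\psi$ and is colored via Theorem \ref{thomassen5ChooseThm}. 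The threshold $\delta$ of Definition \ref{TilingDefnM} is tuned so that after subtracting the radius $rn$ appearing in Theorem \ref{Main4CycleAnnulusThm}, the distance assumption persists on each new component containing cycles of $\mathcal{F}^*\cup\mathcal{C}^{\subseteq A}$, so the iteration continues. When $|\mathcal{F}^*|\leq 1$, a single application of Lemma \ref{MainThmSSFSec} around the remaining bounding cycle, followed by Theorem \ref{cylindertheorem} on any residual component sandwiched between two short precolored cycles, finishes the coloring.

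The hardest step, I expect, is the case of the structural claim in which a chord of a non-triangular face $H$ has its endpoints on a precolored cycle $F\in\mathcal{C}^{\subseteq A}$ whose length may exceed four---ruling this out cleanly requires exploiting the facial status of $F$ in the ambient graph $G$, not merely in $A$. Once the structural claim is in hand, the coloring iteration in the extension claim is driven by the bookkeeping of $\delta$ and $\alpha$ from Definition \ref{TilingDefnM}, which have been calibrated precisely so that the distance conditions survive each application of the annulus black box.
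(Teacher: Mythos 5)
Your structural argument (every interior face of $A$ is a triangle) is in the same spirit as the paper's: both proceed by contradiction using short-inseparability of $A$ together with the chord-triangulation of $\mathcal{T}$, and your case analysis of a chord lying outside $A$ is actually a bit more explicit than the paper's terse treatment. That part is fine.

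The extension argument, however, departs fundamentally from the paper and does not work. The paper handles the extension in one stroke: since $A$ is short-inseparable, every non-boundary face is a triangle, and $\textnormal{fw}^*(A)\geq\textnormal{fw}^*(G)\geq\delta$ (via Proposition \ref{FWstarToFWObs}), the tuple $(\Sigma, A, \mathcal{F}\cup\{F_0\}, L^S_\phi, F_0)$ is a tessellation, so Theorem \ref{PaIIBlackBoxTessMain} applies directly. You instead try to re-derive the result by iterating Theorem \ref{Main4CycleAnnulusThm}, Lemma \ref{MainThmSSFSec}, and Theorem \ref{cylindertheorem}. There are two concrete problems. First, each of those three tools is stated only for \emph{planar} graphs, but $A$ is a subgraph of $G$ living on a surface of genus $g(\Sigma)$, which need not be planar; there is no justification for feeding $A$ or its pieces into a planar black box. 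Second, Theorem \ref{Main4CycleAnnulusThm} requires \emph{both} anchoring faces to be fully-precolored cycles of length at most four, but the faces of $A$ coming from $\mathcal{C}^{\subseteq A}$ (which by hypothesis is a subset of $\mathcal{F}$, hence among the bounding faces) can be arbitrarily long cycles carrying only a precolored path of length at most one plus $3$-lists; these cannot serve as the cycles $F,F'$ in Theorem \ref{Main4CycleAnnulusThm}, and your induction on $|\mathcal{F}^*|$ never colors around them. In effect, you are attempting to re-prove the main theorem of the previous paper in the sequence using tools that are not strong enough for that task, whereas the paper's proof just cites that theorem as a black box.
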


\begin{proof} We begin with the first part of the proposition:

\begin{claim}\label{AnnulusTriangleMain}
Every face of $A$, except possibly those bounded by a cycle among $\{F_0\}\cup\mathcal{F}$, is bounded by a triangle. \end{claim}

\begin{claimproof} Consider the chart $\mathcal{T}^{\dagger}:=(\Sigma, A, \mathcal{F}\cup\{F_0\}, L, F_0)$. Note that, since $\mathcal{T}$ is a chord-triangulated chart, so is $\mathcal{T}^{\dagger}$. Furthermore, since $F_0\subseteq A$, every facial subgraph of $A$ contains at least one cycle. Suppoe there is a facial subgraph $K$ of $A$ with $K\not\in\mathcal{F}\cup\{F_0\}$, where $K$ is not a triangle. Since $K$ contains at least one cycle and $A$ is short-inseparable, it follows that $V(K)=V(A)$, so $A$ is 2-connected and $K$ is a cycle. Since $K\not\in$, all chords of $K$ lie in $A$, and since $K$ is not a triangle, it follows that there is a triangle $T\subseteq K$, which, in $G$, separates a vertex of $K\setminus T$ from $C_*$. But then $T$ is also a separating cycle in $A$, contradicting our assumption that $A$ is short-inseparable. \end{claimproof} 

Now we prove the second part of the proposition. Suppose $d(F, F')\geq\delta$ for all $F\in\mathcal{F}^*$ and $F'\in\mathcal{F}^*\cup\mathcal{C}^{\subseteq A}$. Possibly $\mathcal{F}^*=\varnothing$. In any case, let $S:=\bigcup_{F\in\mathcal{F}^*}V(F)$ and $\phi$ be an $L$-coloring of $S$. Consider the tuple $\mathcal{T}_A:=(\Sigma, A, \mathcal{F}\cup\{F_0\}, L^S_{\phi}, F_0)$. Since $\textnormal{fw}^*(G)\geq\delta$, we have $\textnormal{fw}^*(A)\geq \delta$. This is immediate from repeated applications of B) of Proposition \ref{FWstarToFWObs}. Thus, by A) of Proposition \ref{FWstarToFWObs}, we have $\textnormal{fw}(A)\geq\delta$. Now, $A$ is short-inseparable by assumption, and since every facial subgraph of $A$ other than those of $\mathcal{F}\cup\{F_0\}$ is a triangle, $\mathcal{T}_A$ is chord-triangulated, and thus a tessellation. Thus, it follows from Theorem \ref{PaIIBlackBoxTessMain} applied to $\mathcal{T}_A$ that $A$ is $L^S_{\phi}$-colorable, as desired. \end{proof}

We now have the following. 

\begin{lemma}\label{SsepclaimAndAlmostTriang}
Let $\mathcal{T}=(\Sigma, G, \mathcal{C}, L, C_*)$ be a critical tiling. Then the following hold.
\begin{enumerate}[label=\arabic*)]
\itemsep-0.1em
\item $\mathit{Sep}(G)\neq\varnothing$ and every facial subgraph $K$ of $G$ with $K\not\in\mathcal{C}$ is a triangle.
\item For each $D\in\textit{Sep}(G)$, each of the induced graphs $G[V(\textnormal{Ext}_{\mathcal{T}}(D))]$ and $G[V(\textnormal{Int}_{\mathcal{T}}(D))]$ is $L$-colorable, and furthermore, $D$ separates $C_*$ from at least one element of $\mathcal{C}\setminus\{C_*\}$. 
\end{enumerate}
\end{lemma}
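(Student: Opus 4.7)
The plan is to combine Theorem \ref{PaIIBlackBoxTessMain}, Proposition \ref{MainFaceCycleLemma1}, and the criticality of $\mathcal{T}$ to establish the two assertions.

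First, for $\mathit{Sep}(G)\neq\varnothing$: suppose not. Then by Proposition \ref{FWstarToFWObs}(A) and the tiling hypothesis, $\textnormal{ew}(G)\geq\textnormal{fw}(G)\geq\textnormal{fw}^*(G)\geq\delta>4$, so $G$ is short-inseparable. Combined with the chord-triangulation built into the definition of a tiling, $\mathcal{T}$ is a $(\alpha,4)$-tessellation. Since $\alpha\geq\delta=2.1\beta\cdot 6^{g}$ and each $\mathbf{P}_H$ has the required form (a path of length at most one, or $H$ itself a cycle of length at most four with $\mathbf{P}_H=H$), Theorem \ref{PaIIBlackBoxTessMain} produces an $L$-coloring of $G$, contradicting the non-colorability of the critical tiling.

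For the claim that every facial subgraph $K\notin\mathcal{C}$ is a triangle, I apply Proposition \ref{MainFaceCycleLemma1} with $F_0=C_*$ and $\mathcal{F}=(\mathcal{C}\setminus\{C_*\})\cup\mathcal{S}$, where $\mathcal{S}$ is a minimal collection of ``innermost'' short separating cycles chosen so that $A=A_{\mathcal{T}}(C_*\mid\mathcal{F})$ is short-inseparable, $\mathcal{C}^{\subseteq A}\subseteq\mathcal{F}$, and every element of $\mathcal{F}^*$ is a cycle of length $3$ or $4$. The proposition then yields that every face of $G$ inside $A$ not belonging to $\mathcal{C}$ is a triangle. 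For a non-$\mathcal{C}$ face strictly inside some $D\in\mathcal{S}$, I argue inductively on $\textnormal{Int}_{\mathcal{T}}(D)$: by Proposition \ref{FWstarToFWObs}(B) the face-width is still at least $\delta$, and adjoining $D$ to the family of precolored cycles gives a strictly smaller tiling to which the recursive argument applies.

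For part (2), fix $D\in\mathit{Sep}(G)$ and consider $G[V(\textnormal{Ext}_{\mathcal{T}}(D))]$ and $G[V(\textnormal{Int}_{\mathcal{T}}(D))]$. Each has strictly fewer vertices than $G$, and after adjoining $D$ to the family of precolored cycles as a fully-precolored cycle with $\mathbf{P}_D=D$ (valid since $|V(D)|\leq 4$ and the lists of size $\geq 3$ suffice to $L$-color a $4$-cycle), each forms a smaller tiling: the face-width bound is inherited from Proposition \ref{FWstarToFWObs}(B), chord-triangulation follows from the first part, and distance conditions are preserved. By the vertex-minimality of $\mathcal{T}$, each side is $L$-colorable. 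For the separation claim, suppose $D$ did not separate $C_*$ from any element of $\mathcal{C}\setminus\{C_*\}$; then $\textnormal{Int}_{\mathcal{T}}(D)$ contains no element of $\mathcal{C}$ beyond possibly $D$, so every internal vertex there has list size at least five. Taking any $L$-coloring of the exterior just established, its restriction to $V(D)$ extends through $\textnormal{Int}_{\mathcal{T}}(D)$ by Corollary \ref{CycleLen4CorToThom}, producing an $L$-coloring of $G$ and contradicting criticality.

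The hard part will be choosing $\mathcal{F}$ in the second step so that $A$ is simultaneously short-inseparable and satisfies $\mathcal{C}^{\subseteq A}\subseteq\mathcal{F}$ along with the length bound on $\mathcal{F}^*$, and ensuring that the inductive step inside a separating cycle preserves the tiling structure. A secondary issue is verifying in part (2) that the distance from $D$ to other elements of $\mathcal{C}$ on the same side remains at least $\alpha$ when $D$ is adjoined, which one expects from the fact that $\alpha$ is much larger than $|V(D)|$ and the existing distance conditions in $\mathcal{T}$, but which must be checked carefully.
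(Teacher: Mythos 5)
Your opening argument for $\mathit{Sep}(G)\neq\varnothing$ is correct and matches the paper's, and your plan to invoke Proposition \ref{MainFaceCycleLemma1} for the triangle claim is the right idea (the paper applies it to $A_{\mathcal{T}}(C_*\mid\mathit{Sep}(G))$ when $K$ lies outside all separating cycles, or to $A_{\mathcal{T}}(D_m\mid\mathcal{F}'_K)$ for a carefully chosen $D_m$ otherwise; note that the structural conclusion of that proposition holds without any distance hypothesis, so this case split requires no passage to a smaller tiling).

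The genuine gap is the step where you adjoin $D$ to $\mathcal{C}$ as a new precolored cycle, which you use both in the inductive step of the triangle claim and throughout part (2). You flag the distance verification as a ``secondary issue,'' but it is fatal: the chart axioms require $d(H_1, H_2)\geq\alpha$ for all distinct $H_1, H_2\in\mathcal{C}$, and a separating cycle $D$ in a critical tiling is \emph{not} in general $\alpha$-far from $\mathcal{C}$ — indeed Proposition \ref{CriticalLemmaSep} later shows that every $D\in\mathit{Sep}(G)$ is $\mathcal{C}$-close, i.e.\ there is a $C\in\mathcal{C}$ with $d(D,C)<\delta\ll\alpha$. So $(\Sigma, H, \mathcal{C}^{\subseteq H}\cup\{D\}, L, C_*)$ is not an $(\alpha,4)$-chart at all, and you cannot appeal to the criticality of $\mathcal{T}$. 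The paper avoids this entirely: it keeps $\mathcal{C}^{\subseteq H}$ unchanged in $\mathcal{T}_{\textnormal{out}}=(\Sigma, H, \mathcal{C}^{\subseteq H}, L, C_*)$, observes that chord-triangulation can only fail if $D$ is an induced $4$-cycle face of $H$, and in that one case uses Lemma \ref{TriangulationCorMainLmemmaused1} to add a single triangulating edge inside the disc bounded by $D$ (preserving $\textnormal{fw}^*$ and the $\alpha$-separation of $\mathcal{C}^{\subseteq H}$), after which vertex-minimality applies. A second, smaller problem: for the separation claim you invoke Corollary \ref{CycleLen4CorToThom}, but that is a planar statement, and $\textnormal{Int}_{\mathcal{T}}(D)$ — defined as the side not containing $C_*$ — need not be the disc side of $D$ on a positive-genus surface. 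The paper instead considers the tuple $(\Sigma, \textnormal{Int}_{\mathcal{T}}(D), \{D\}, L^D_{\phi}, D)$, whose singleton distinguished family makes the distance conditions vacuous, and concludes by vertex-minimality.
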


\begin{proof} We first prove 1). Suppose first that $\textit{Sep}(G)=\varnothing$. Thus, since $\mathcal{T}$ is chord-triangulated, it is also a tessellation, and since $\textnormal{fw}(G)\geq\textnormal{fw}^*(G)\geq\delta$, it follows from Theorem \ref{PaIIBlackBoxTessMain} that $G$ is $L$-colorable, which is false. Thus, $\mathit{Sep}(G)\neq\varnothing$. Let $K$ be a facial subgraph of $G$ with $K\not\in\mathcal{C}$. We show that $K$ is a triangle. Note that, for each $D\in\textit{Sep}(G)$, either $K\subseteq\textnormal{Int}_{\mathcal{T}}(D)$ or $K\subseteq\textnormal{Ext}_{\mathcal{T}}(D)$. Consider the following cases.

\textbf{Case 1:} For each $D\in\textit{Sep}(G)$, $K\subseteq\textnormal{Ext}_{\mathcal{T}}(D)$

In this case, we consider the annulus $A:=A_{\mathcal{T}}(C_*\mid\textit{Sep}(G))$. By definition, $A$ is short-inseparable, and furthermore, $K$ is a facial subgraph of $A$ with $K\not\in\mathcal{C}\cup\textit{Sep}(G)$, so it follows from Proposition \ref{MainFaceCycleLemma1} that $K$ is a triangle. 

\textbf{Case 2:} There exists a $D\in\textit{Sep}(G)$ with $K\subseteq\textnormal{Int}_{\mathcal{T}}(D)$

In this case, we let $\mathcal{F}_K:=\{D\in\textit{Sep}(G): K\subseteq\textnormal{Int}_{\mathcal{T}}(D)\}$. We let $D_m$ be a cycle minimizing $|V(\textnormal{Int}_{\mathcal{T}}(D))|$ over all $D\in\mathcal{F}_K$. We also let $\mathcal{F}'_K:=\{D\in\textit{Sep}(G): D\subseteq\textnormal{Int}_{\mathcal{T}}(D_m)\ \textnormal{and}\ D\neq D_m\}$. Possibly $\mathcal{F}'_K=\varnothing$. In any case, we let $A:=A_{\mathcal{T}}(D_m\mid\mathcal{F}'_K)$. Note that $K$ is also a facial subgraph of $A$, and, again by Proposition \ref{MainFaceCycleLemma1}, $K$ is a triangle. Since each element of $\mathcal{C}$ is a cycle, it follows that every facial subgraph of $G$ is a cycle. This proves 1). 

Now we prove 2). Let $D\in\mathit{Sep}(G)$. Let $H:=G[\textnormal{Ext}_{\mathcal{T}}(D)]$ and $\mathcal{T}_{\textnormal{out}}:=(\Sigma, H, \mathcal{C}^{\subseteq H}, L, C_*)$. By B) of Proposition \ref{FWstarToFWObs}, $\textnormal{fw}^*(H)\geq\textnormal{fw}^*(G)$. Suppose first that either $|E(D)|=3$ or $D$ is not induced in $G$. In that case, $\mathcal{T}_{\textnormal{out}}$ is already chord-triangulated, so $\mathcal{T}_{\textnormal{out}}$ is a tiling and, by vertex-minimality, $H$ is $L$-colorable. Now suppose that $D$ is an induced cycle of length four. In this case, there is a unique component $U$ of $\Sigma\setminus H$ with $\partial(U)=D$. Since $D\not\in\mathcal{C}^{\subseteq H}$, it follows from Lemma \ref{TriangulationCorMainLmemmaused1} that there is an embedding $H'$ obtained from $H$ by adding precisely one edge $e$ to $\textnormal{Cl}(U)$, with opposing endpoints in $D$, such that each cycle in $D+e$ is contractible, each element of $\mathcal{C}^{\subseteq H}$ is still a facial subgraph of $H'$, and the elements of $\mathcal{C}^{\subseteq H}$ are pairwise of distance at least $\alpha$ apart in $H'$. Since $D$ has length four, we have $\textnormal{fw}^*(H')=\textnormal{fw}^*(H)\geq\textnormal{fw}^*(G)$, so, by vertex-minimality, $H'$ is $L$-colorable, and thus $H$ is $L$-colorable. 

\begin{claim}\label{InsideTNotOnlyRing} There is at least one $C\in\mathcal{C}$ with $C\subseteq\textnormal{Int}_{\mathcal{T}}(D)$. \end{claim}

\begin{claimproof} Suppose not. As shown above, $H$ admits an $L$-coloring $\phi$. Consider the tuple $\mathcal{T}':=(\Sigma, \textnormal{Int}_{\mathcal{T}}(D), \{D\}, L_{\phi}^D, D)$. By B) of Proposition \ref{FWstarToFWObs}, $\textnormal{fw}^*(\textnormal{Int}_{\mathcal{T}}(D))\geq\textnormal{fw}^*(G)$, and the pairwise-distance conditions on the designated faces are trivially satisfied, and $\mathcal{T}'$ is still chord-triangulated, so it is a tiling. By minimality, $\textnormal{T}'$ is colorable, so $\phi$ extends to an $L$-coloring of $G$, which is false. \end{claimproof}

Now let $K:=G[\textnormal{Int}_{\mathcal{T}}(D)]$. By Claim \ref{InsideTNotOnlyRing}, there is a $C\in\mathcal{C}^{\subseteq K}$. Let $\mathcal{T}_{\textnormal{in}}:=(\Sigma, K, \mathcal{C}^{\subseteq K}, L, C)$. Now an identical argument to the one for $V(\textnormal{Ext}_{\mathcal{T}}(D))$ above shows that $K$ is also $L$-colorable. This proves Lemma \ref{SsepclaimAndAlmostTriang}. \end{proof}

\section{Cycle Descendants}\label{CycleDescSec}

The main ingredient in the proof of Theorem \ref{ShortReductionTheoremfirst} is Proposition \ref{CriticalLemmaSep}, where we prove that, for a critical tiling $\mathcal{T}=(\Sigma, G, \mathcal{C}, L, C_*)$ and each $D\in\textnormal{Sep}(G)$, there is an ``obstruction" to extending an $L$-coloring of $V(D)$ to an $L$-coloring of $\textnormal{Int}_{\mathcal{T}}(D)$, where this obstruction is contained in $\textnormal{Int}_{\mathcal{T}}(D)$ and is close to $D$ in a sense we make precise below in Definition \ref{DefnofcCloseRdedBluebinrelat}. 

\begin{defn}\emph{Let $\mathcal{T}:=(\Sigma, G, \mathcal{C}, L, C_*)$ be a chart.}
\begin{enumerate}[label=\emph{\arabic*)}]
\item \emph{Given a cycle $F\subseteq G$, we define the following.}
\begin{enumerate}[label=\emph{\alph*)}]
\itemsep-0.1em
\item \emph{A cycle $D\in\mathit{Sep}(G)$ is called a \emph{descendant} of $F$ if $D\neq F$ and $D\subseteq\textnormal{Int}(F)$. We denote the set of descendants of $F$ by $\mathcal{I}(F)$.}
\item \emph{A cycle $D\in\mathcal{I}(F)$ is called an \emph{immediate descendant} of $F$ if, for any $D'\in\mathcal{I}(F)$ such that $D\subseteq\textnormal{Int}(D')$, we have $D'=D$. We denote the set of immediate descendants of $F$ by $\mathcal{I}^m(F)$.}
\end{enumerate}
\item \emph{Give a cycle $D\in\mathit{Sep}(G)$, we define the following.}
\begin{enumerate}[label=\emph{\alph*)}]
\itemsep-0.1em
\item \emph{We say that $D$ is \emph{minimal} if $\mathcal{I}(D)=\varnothing$. Likewise, we say that $D$ is \emph{maximal} if there does not exist a $D'\in\mathit{Sep}(G)$ such that $D\in\mathcal{I}(D')$.}
\item \emph{We say that $D$ is a \emph{blue cycle} if, for every $C\in\mathcal{C}^{\subseteq\textnormal{Int}(D)}$, there exists a $D'\in\mathcal{I}(D)$ such that $C\subseteq\textnormal{Int}(D')$. Otherwise, we say that $D$ is a \emph{red} cycle.}
\item \emph{We let $\mathit{Sep}_r(G)$ denote the set of red cycles in $\mathit{Sep}(G)$, and we let $\mathit{Sep}_b(G)$ denote the set of blue cycles of $\mathit{Sep}(G)$.}
\end{enumerate}
\end{enumerate}
  \end{defn}

\begin{lemma}\label{BlueCycleObsMinimalRed}
Let $\mathcal{T}=(\Sigma, G, \mathcal{C}, L, C_*)$ be a critical tiling. Then the following hold.
\begin{enumerate}[label=\arabic*)]
\itemsep-0.1em
\item For any $D\in\mathit{Sep}_b(G)$, there exists a $D'\subseteq\textnormal{Int}(D)$ with $D'\in\mathit{Sep}_r(G)$; AND
\item For any minimal $D\in\mathit{Sep}(G)$, there exists a $C\in\mathcal{C}^{\subseteq\textnormal{Int}(D)}$ such that $d(C,D)<\delta$.
\end{enumerate}
\end{lemma}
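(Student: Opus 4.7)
For part 1), the strategy is to descend through the interior of $D$ to a minimal element of $\mathit{Sep}(G)$ and observe it must be red. By Lemma \ref{SsepclaimAndAlmostTriang}, $D$ separates $C_*$ from some element of $\mathcal{C}\setminus\{C_*\}$, so $\mathcal{C}^{\subseteq\textnormal{Int}_{\mathcal{T}}(D)}\neq\varnothing$; since $D$ is blue, the definition forces $\mathcal{I}(D)\neq\varnothing$. Pick $D^*\in\mathcal{I}(D)$ minimizing $|V(\textnormal{Int}_{\mathcal{T}}(D^*))|$. Any $D''\in\mathcal{I}(D^*)$ would satisfy $D''\subseteq\textnormal{Int}_{\mathcal{T}}(D^*)\subsetneq\textnormal{Int}_{\mathcal{T}}(D)$ with $D''\neq D$, placing it in $\mathcal{I}(D)$ with strictly smaller interior, contradicting minimality. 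Hence $\mathcal{I}(D^*)=\varnothing$. Applying Lemma \ref{SsepclaimAndAlmostTriang} once more to $D^*$ yields some $C\in\mathcal{C}^{\subseteq\textnormal{Int}_{\mathcal{T}}(D^*)}$; were $D^*$ blue, this $C$ would need to lie inside some descendant of $D^*$, but $\mathcal{I}(D^*)=\varnothing$. Therefore $D^*\in\mathit{Sep}_r(G)$, and clearly $D^*\subseteq\textnormal{Int}_{\mathcal{T}}(D)$.

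For part 2), suppose toward contradiction that $D\in\mathit{Sep}(G)$ is minimal but $d(C,D)\geq\delta$ for every $C\in\mathcal{C}^{\subseteq\textnormal{Int}_{\mathcal{T}}(D)}$. Lemma \ref{SsepclaimAndAlmostTriang} gives an $L$-coloring $\phi$ of $G[V(\textnormal{Ext}_{\mathcal{T}}(D))]$; the plan is to extend $\phi|_{V(D)}$ across $\textnormal{Int}_{\mathcal{T}}(D)$ using Proposition \ref{MainFaceCycleLemma1}, whereupon $\phi\cup\psi$ will color all of $G$, contradicting criticality. Apply that proposition with $F_0:=D$ and $\mathcal{F}:=\mathcal{C}^{\subseteq\textnormal{Int}_{\mathcal{T}}(D)}$. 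For each $C\in\mathcal{F}$, the inclusion $C_*\subseteq\textnormal{Ext}_{\mathcal{T}}(D)$ forces $C\neq C_*$, and since $C$ is then a facial cycle of $G$ bounding an empty face, one has $\textnormal{Ext}_{\mathcal{T}}(C)=G$. Hence $A:=A_{\mathcal{T}}(D\mid\mathcal{F})=\textnormal{Int}_{\mathcal{T}}(D)$, $\mathcal{C}^{\subseteq A}=\mathcal{F}$, and $\mathcal{F}^*=\{D\}$.

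The main obstacle is verifying the short-inseparability of $A$, and this is where the minimality of $D$ is used crucially. Since $A$ lies in the closed disc bounded by $D$, $A$ is planar and $\textnormal{ew}(A)=\infty>4$. Any separating $3$- or $4$-cycle $C'$ of $A$ would be a contractible cycle in $G$ both of whose sides in $\Sigma$ contain a vertex of $V(A)\subseteq V(G)$ (supplemented on one side by the vertices of $V(\textnormal{Ext}_{\mathcal{T}}(D))\setminus V(D)$, which is nonempty as $D\in\mathit{Sep}(G)$), so $C'$ would be a separating cycle of $G$ contained in $\textnormal{Int}_{\mathcal{T}}(D)$, forcing $C'\in\{D\}\cup\mathcal{I}(D)$. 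But $D$ is not separating in $A$ because its exterior component in $\Sigma$ contains no $V(A)$-vertex, and $\mathcal{I}(D)=\varnothing$ by minimality; so no such $C'$ exists. The size hypothesis on $\mathcal{F}^*$ is immediate from $3\leq|V(D)|\leq 4$, and the distance hypothesis $d(D,C)\geq\delta$ over $C\in\mathcal{C}^{\subseteq A}=\mathcal{F}$ is precisely our contradiction assumption. Proposition \ref{MainFaceCycleLemma1} therefore extends $\phi|_{V(D)}$ to an $L$-coloring $\psi$ of $A=\textnormal{Int}_{\mathcal{T}}(D)$; since $\phi$ and $\psi$ agree on $V(D)$ and every edge of $G$ lies in $\textnormal{Ext}_{\mathcal{T}}(D)$ or $\textnormal{Int}_{\mathcal{T}}(D)$, the union $\phi\cup\psi$ is an $L$-coloring of $G$, contradicting that $\mathcal{T}$ is critical.
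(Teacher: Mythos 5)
Your proof is correct and takes essentially the same approach as the paper's: descend to a minimal element of $\mathit{Sep}(G)$ for part~1), and for part~2) use the coloring of $\textnormal{Ext}(D)$ from Lemma \ref{SsepclaimAndAlmostTriang} together with Proposition \ref{MainFaceCycleLemma1} applied to $\textnormal{Int}(D)$. You are somewhat more explicit than the paper about two points that it leaves implicit, namely that a minimal separating cycle is red precisely because Lemma \ref{SsepclaimAndAlmostTriang}~(2) guarantees $\mathcal{C}^{\subseteq\textnormal{Int}(D^*)}\neq\varnothing$, and that the correct choice of $\mathcal{F}$ in Proposition \ref{MainFaceCycleLemma1} is $\mathcal{C}^{\subseteq\textnormal{Int}(D)}$ (rather than $\mathcal{I}(D)=\varnothing$), with the short-inseparability of $\textnormal{Int}(D)$ following from the minimality of $D$.
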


\begin{proof} Let $D\in\mathit{Sep}_b(G)$. Since $D$ is blue, we have $\mathcal{I}(D)\neq\varnothing$ by definition. Since $G$ is finite, there exists a minimal descendant $D'$ of $D$. Thus, $\mathcal{I}(D')=\varnothing$, and so $D'\in\mathit{Sep}_r(G)$. This proves 1). Now we prove 2). Let $D\in\mathit{Sep}(G)$ be minimal. Since $\mathcal{I}(D)=\varnothing$, we have $D\in\mathit{Sep}_r(G)$. Since $D$ is minimal, we have $\mathcal{I}(D)=\varnothing$, and thus $A(D| \mathcal{I}(D))=\textnormal{Int}(D)$. Suppose toward a contradiction that $d(C, D)\geq\delta$ for all $C\in\mathcal{C}^{\subseteq\textnormal{Int}(D)}$. By 2) of Lemma \ref{SsepclaimAndAlmostTriang}, $V(\textnormal{Ext}(D))$ admits an $L$-coloring $\phi$, and,  by Proposition \ref{MainFaceCycleLemma1} applied to $A(D, \mathcal{I}(D))$, we get that $\phi$ extends to an $L$-coloring of $\textnormal{Int}(D)$, and thus $G$ is $L$-colorable, which is false. \end{proof}

We now introduce the following definitions.

\begin{defn}\label{DefnofcCloseRdedBluebinrelat} Let $\mathcal{T}:=(\Sigma, G, \mathcal{C}, L, C_*)$ be a chart.
\begin{enumerate}[label=\emph{\arabic*)}]
\item\emph{We say that a cycle $D\in\mathit{Sep}(G)$ is $\mathcal{C}$-\emph{close} if one of the following holds.}
\begin{enumerate}[label=\emph{\alph*)}]
\itemsep-0.1em 
\item \emph{$D\in\mathit{Sep}_r(G)$ and there exists a $C\in\mathcal{C}^{\subseteq A(D|\mathcal{I}^m(D))}$ such that $d(D,C)<\delta$}; OR
\item \emph{$D\in\mathit{Sep}_b(G)$ and there exists a red descendant $D'$ of $D$ such that $d(D, D')<\gamma$.}
\end{enumerate}
\item \emph{We define a binary relation $\sim$ on $\mathit{Sep}(G)$ as follows: For $D_0, D_1\in\mathit{Sep}(G)$, we say that $D_0\sim D_1$ if there is a $C\in\mathcal{C}$ such that $C\subseteq\textnormal{Int}(D_0)\cap\textnormal{Int}(D_1)$ and $d(C, D_i)\leq\delta+\gamma$ for each $i\in\{0,1\}$.}
\end{enumerate}\end{defn}

\begin{lemma}\label{ObviousDistanceFact}
Let $\mathcal{T}=(\Sigma, G, \mathcal{C}, L, C_*)$ be a critical tiling. Then the following hold.
\begin{enumerate}[label=\arabic*)]
\itemsep-0.1em
\item  If $D\in\mathit{Sep}(G)$ and $H_1, H_2$ are two subgraphs of $G$, then $d(H_1, H_2)\leq d(H_1, D)+d(H_2, D)+2$; AND
\item For any $D\in\mathit{Sep}(G)$ such that every cycle in $\{D\}\cup\mathcal{I}(D)$ is $\mathcal{C}$-close, there is a unique $C\in\mathcal{C}$ with $C\subseteq\textnormal{Int}(D)$ such that $d(C,D)\leq\delta+\gamma$.
\end{enumerate}
\end{lemma}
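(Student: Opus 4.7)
The plan is as follows. Part 1 is a simple triangle-inequality style argument that exploits the bounded length of $D$. Since $D \in \mathit{Sep}(G)$, we have $|V(D)| \leq 4$, so any two vertices of $V(D)$ are within distance at most $2$ of each other in $G$ (indeed even within $D$ itself). I would pick vertices $u_1, u_2 \in V(D)$ together with vertices $v_1 \in V(H_1)$ and $v_2 \in V(H_2)$ such that $d(v_i, u_i) = d(H_i, D)$ for each $i \in \{1,2\}$, and then write
\[
d(H_1, H_2) \leq d(v_1, u_1) + d(u_1, u_2) + d(u_2, v_2) \leq d(H_1, D) + 2 + d(H_2, D),
\]
which gives the desired bound.

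For part 2, I would handle existence and uniqueness separately. For existence, split on the color of $D$. If $D \in \mathit{Sep}_r(G)$, then the definition of $\mathcal{C}$-close directly produces a cycle $C \in \mathcal{C}^{\subseteq A(D \mid \mathcal{I}^m(D))} \subseteq \mathcal{C}^{\subseteq \textnormal{Int}(D)}$ with $d(C, D) < \delta \leq \delta + \gamma$. If $D \in \mathit{Sep}_b(G)$, then $\mathcal{C}$-closeness of $D$ yields a red descendant $D' \in \mathcal{I}(D)$ with $d(D, D') < \gamma$; since $D'$ is itself $\mathcal{C}$-close by hypothesis, there is a cycle $C \in \mathcal{C}^{\subseteq \textnormal{Int}(D')} \subseteq \mathcal{C}^{\subseteq \textnormal{Int}(D)}$ with $d(C, D') < \delta$. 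Applying part 1 with the separating cycle $D'$, we get
\[
d(C, D) \leq d(C, D') + d(D, D') + 2 \leq (\delta - 1) + (\gamma - 1) + 2 = \delta + \gamma,
\]
where I use that the strict inequalities in the definition of $\mathcal{C}$-close allow me to absorb the additive $2$.

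For uniqueness, suppose toward a contradiction that there are two distinct $C_1, C_2 \in \mathcal{C}$ with $C_1, C_2 \subseteq \textnormal{Int}(D)$ and $d(C_i, D) \leq \delta + \gamma$ for $i \in \{1, 2\}$. Applying part 1 with $H_1 = C_1$, $H_2 = C_2$, and the separating cycle $D$, I get $d(C_1, C_2) \leq 2(\delta + \gamma) + 2$. On the other hand, since $\mathcal{T}$ is an $(\alpha, 4)$-chart, any two distinct elements of $\mathcal{C}$ are at distance at least $\alpha = 7\delta \log_2(\delta) + 3\gamma$. For the constants in Definition \ref{TilingDefnM}, we have $\alpha > 2(\delta + \gamma) + 2$ (since $7\delta \log_2(\delta)$ dominates $2\delta + 2$ for the relevant range of $\delta \geq \beta \geq 4 \cdot 10^6$), contradicting $C_1 \neq C_2$. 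Hence $C$ is unique. I expect no real obstacle here; the main subtlety is just tracking the strict versus non-strict inequalities carefully when combining the $\mathcal{C}$-closeness bounds in the blue-cycle case.
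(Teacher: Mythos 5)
Your argument is correct and follows essentially the same route as the paper: treat part 1 as an elementary triangle-inequality fact using $|V(D)|\leq 4$, split the red/blue cases for existence using the definition of $\mathcal{C}$-close, apply part 1 through the intermediate red descendant $D'$ in the blue case, and obtain uniqueness by a second application of part 1 combined with the $\alpha$-separation of distinct members of $\mathcal{C}$. The only difference is cosmetic; the paper leaves part 1 as ``immediate'' and does not spell out the $(\delta-1)+(\gamma-1)+2=\delta+\gamma$ bookkeeping in the blue case, whereas you make those integer-distance absorptions explicit.
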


\begin{proof} 1) is immediate so we prove 2). If $D\in\mathit{Sep}_r(G)$ then, since $D$ is $\mathcal{C}$-close, there is a $C\in\mathcal{C}^{\subseteq A(D, \mathcal{I}^m(D))}$ such that $d(C,D)<\delta$, so we are done in that case. If $D\in\mathit{Sep}_b(G)$, then, since $D$ is $\mathcal{C}$-close, there is a $D'\in\mathit{Sep}_r(G)\cap\mathcal{I}(D)$ such that $d(D, D')<\gamma$. Since $D'$ is $\mathcal{C}$-close, there is a $C\in\mathcal{C}^{\subseteq A(D', \mathcal{I}^m(D'))}$ with $d(C, D')<\delta$. By 1), we have $d(C, D)\leq\delta+\gamma$. Now suppose there is another cycle $C'\in\mathcal{C}^{\subseteq\textnormal{Int}(D)}$ with $d(C', D)\leq\delta+\gamma$. Applying 1) again, we have $d(C, C')\leq 2(\delta+\gamma)+2<\alpha$, contradicting our distance conditions on $\mathcal{T}$. \end{proof}

It immediately follows from 2) of Lemma \ref{ObviousDistanceFact} that the relation $\sim$ partitions the set of $\mathcal{C}$-close cycles of $\mathit{Sep}(G)$ into equivalence classes. We now have the following:

\begin{prop}\label{equivclassobs}

Let $\mathcal{T}=(\Sigma, G, \mathcal{C}, L, C_*)$ be a critical chart and $\mathcal{M}\subseteq\mathit{Sep}(G)$ be a collection of separating cycles in $G$ of length at most four such that the following hold.

\begin{enumerate}[label=\arabic*)]
\itemsep-0.1em
\item For any distinct $D, D'\in\mathcal{M}$, $D\not\in\mathcal{I}(D')$; AND
\item For each $D\in\mathcal{M}$, every cycle of $\{D\}\cup\mathcal{I}(D)$ is $\mathcal{C}$-close.
\end{enumerate}
Let $D_1,\cdots, D_k\in\mathcal{M}$ be a set of representatives of distinct equivalence classes of $\mathcal{M}$. For each $i=1, \cdots, k$, let $[D_i]=\{D\in\mathcal{M}: D\sim D_i\}$. Then the following hold.
\begin{enumerate}[label=\arabic*)]
\item For each $1\leq i<j\leq k$, the graphs $\textnormal{Int}(D_i)$ and $\textnormal{Int}(D_j)$ are disjoint; AND
\item There exist $k$ distinct element $C_1,\cdots, C_k\in\mathcal{C}$ such that, for each $j\in\{1,\cdots, k\}$, $C_j\subseteq\bigcap_{D^*\in [D_j]}\textnormal{Int}(D^*)$ and $d(C_j, D^*)\leq\delta+\gamma$ for each $D^*\in [D_j]$; AND
\item $\bigcup_{i=1}^k V(\textnormal{Int}(D_i))$ is $L$-colorable. 
\end{enumerate}
\end{prop}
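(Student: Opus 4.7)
The plan is to establish the three conclusions in order, with Lemma \ref{ObviousDistanceFact} doing most of the heavy lifting. The main preliminary step is a distance bound between elements of different equivalence classes: for any $D, D' \in \mathcal{M}$ with $D \not\sim D'$, let $C$ and $C'$ be the unique rings in $\mathcal{C}$ given by Lemma \ref{ObviousDistanceFact}(2), so $C \subseteq \textnormal{Int}(D)$, $C' \subseteq \textnormal{Int}(D')$, and $d(C,D), d(C',D') \leq \delta+\gamma$. Then $C \neq C'$ (otherwise the witness to $D \sim D'$ would be immediate), so $d(C, C') \geq \alpha$ by the distance condition of the chart, and the triangle inequality $d(C, C') \leq d(C, D) + d(D, D') + d(D', C')$ yields $d_G(D, D') \geq \alpha - 2(\delta+\gamma)$, which, since $\alpha = 7\delta\log_2\delta + 3\gamma$, is much greater than any fixed constant.

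For part 1, suppose toward contradiction that $V(\textnormal{Int}(D_i)) \cap V(\textnormal{Int}(D_j)) \neq \varnothing$ for some $i \neq j$, and let $\Delta_i, \Delta_j \subseteq \Sigma$ denote the closed discs bounded by the contractible cycles $D_i, D_j$. Since the interior graphs share a vertex, $\Delta_i \cap \Delta_j \neq \varnothing$. If $\Delta_i \subseteq \Delta_j$ (or vice versa), then $D_i \subseteq \textnormal{Int}(D_j)$, so $D_i \in \mathcal{I}(D_j)$, contradicting condition 1 of the proposition. Otherwise the boundary curves of $\Delta_i$ and $\Delta_j$ must cross in $\Sigma$; since these curves are the graph cycles $D_i, D_j$ in the embedding, two such curves can intersect only at a common vertex, so $V(D_i) \cap V(D_j) \neq \varnothing$ and $d_G(D_i, D_j) = 0$, contradicting the distance bound above.

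For part 2, take $C_j \in \mathcal{C}$ to be the unique ring guaranteed by Lemma \ref{ObviousDistanceFact}(2) applied to $D_j$, so $C_j \subseteq \textnormal{Int}(D_j)$ and $d(C_j, D_j) \leq \delta+\gamma$. For any $D^* \in [D_j]$, the relation $D^* \sim D_j$ furnishes some $C \in \mathcal{C}$ with $C \subseteq \textnormal{Int}(D^*) \cap \textnormal{Int}(D_j)$ and $d(C, D_j), d(C, D^*) \leq \delta + \gamma$. Applying Lemma \ref{ObviousDistanceFact}(2) to both $D_j$ and $D^*$, the uniqueness of the close ring forces $C$ to equal both $C_j$ and the analogous unique ring $C^*$ of $D^*$; hence $C_j \subseteq \textnormal{Int}(D^*)$ and $d(C_j, D^*) \leq \delta + \gamma$ for every $D^* \in [D_j]$. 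Distinctness of $C_1, \ldots, C_k$ is immediate: any equality $C_i = C_j$ with $i \neq j$ would directly witness $D_i \sim D_j$, contradicting that the classes are distinct.

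For part 3, the distance bound gives $d_G(D_i, D_j) \geq \alpha - 2(\delta+\gamma) \geq 2$ for all $i \neq j$, hence no edge of $G$ has one endpoint in $V(D_i)$ and the other in $V(D_j)$. Moreover, every vertex of $V(\textnormal{Int}(D_i)) \setminus V(D_i)$ lies strictly inside $\Delta_i$, so by planarity all of its neighbors lie in $V(\textnormal{Int}(D_i))$. Combining with the vertex-disjointness from part 1, the induced subgraph $G\bigl[\bigcup_{i=1}^k V(\textnormal{Int}(D_i))\bigr]$ is the disjoint union of the $G[V(\textnormal{Int}(D_i))]$, each of which is $L$-colorable by Lemma \ref{SsepclaimAndAlmostTriang}(2), and the union of these colorings is the desired $L$-coloring. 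The main obstacle throughout is the topological step in part 1: justifying rigorously that two overlapping closed discs on $\Sigma$ with neither contained in the other must have boundary cycles meeting at a vertex of $G$, rather than in some more exotic surface configuration.
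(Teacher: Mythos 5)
Your proof follows essentially the same approach as the paper: invoke Lemma \ref{ObviousDistanceFact}(2) to associate a unique ring $C_D\in\mathcal{C}$ to each $D\in\mathcal{M}$, observe that this map is constant on $\sim$-equivalence classes and injective across classes, contradict the $\alpha$-separation condition on $\mathcal{T}$ to show the interiors cannot meet, and finish by combining $L$-colorings of each $V(\textnormal{Int}(D_i))$ supplied by Lemma \ref{SsepclaimAndAlmostTriang}. Two small technical notes: your triangle inequality through $D$ and $D'$ should carry an additive $+2$ per hop (as in Lemma \ref{ObviousDistanceFact}(1)), so the bound is $d(D,D')\geq\alpha-4-2(\delta+\gamma)$ rather than $\alpha-2(\delta+\gamma)$, which does not affect anything since $\alpha$ dwarfs the constant; and on a surface of positive genus the side of $D_i$ not containing $C_*$ need not be a disc, but your topological argument in part 1 works verbatim if $\Delta_i$ is read as the closure of whichever component of $\Sigma\setminus D_i$ defines $\textnormal{Int}(D_i)$, since the only facts used are that $D_i$ is separating and $D_j$ is connected.
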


\begin{proof} Applying 2) of Lemma \ref{ObviousDistanceFact}, there exist $k$ distinct elements $C_1,\cdots, C_k\in\mathcal{C}$ such that, for each $j\in\{1,\cdots, k\}$, $C_j\subseteq\bigcap_{D^*\in [D_j]}\textnormal{Int}(D^*)$ and $d(C_j, D^*)\leq\delta+\gamma$ for each $D^*\in [D_j]$. Suppose toward a contradiction that there exists a pair of indices $1\leq i<j\leq k$ such that $\textnormal{Int}(D_i)\cap\textnormal{Int}(D_j)\neq\varnothing$. Then, since $D_i\not\in\mathcal{I}(D_j)$ and $D_j\not\in\mathcal{I}(D_i)$, we have $D_i\cap D_j\neq\varnothing$, and thus $d(C_i, C_j)\leq 2(\delta+\gamma+3)<\alpha$, contradicting our distance conditions on $\mathcal{T}$. To finish, it suffices to check that $\bigcup_{i=1}^kV(\textnormal{Int}(D_i))$ is $L$-colorable. For any distinct $i,j\in\{1,\cdots, k\}$, we have $d(D_i, C_i)\leq\delta+\gamma$ and $d(D_j, C_j)\leq\delta+\gamma$. By our conditions on $\mathcal{T}$, we have $d(C_i, C_j)\geq\alpha$. By two successive applications of 1) of Lemma \ref{ObviousDistanceFact}, we have $d(C_i, D_i)+d(D_i, D_j)+d(D_j, C_j)\geq\alpha-4$, so $d(D_i, D_j)\geq (\alpha-4)-2(\delta+\gamma)$. Since $D_i\not\in\mathcal{I}(D_j)$ and $\mathcal{I}(D_j)$, the graph $\bigcup_{i=1}^kG[V(\textnormal{Int}(D_i))$ is a union of connected components, pairwise of distance at least $(\alpha-4)-2(\delta+\gamma)$ apart. For each $i=1,\cdots, k$, $V(\textnormal{Int}(D_i))$ is $L$-colorable by Lemma \ref{SsepclaimAndAlmostTriang}. Since $(\alpha-4)-2(\delta+\gamma)>1$, the union $\bigcup_{i=1}^kV(\textnormal{Int}(D_i))$ is also $L$-colorable. \end{proof}

Note that, by Proposition \ref{equivclassobs}, it follows that, for any cycle $D\subseteq G$, the relation $\sim$ partitions $\mathcal{I}^m(D)$ into equivalence classes, since no cycle of $\mathcal{I}^m(D)$ lies in the interior of another. 

\begin{prop}\label{RedBlueInequality}
Let $\mathcal{T}=(G, \mathcal{C}, L, C_*)$ be a critical chart. Let $D$ be a cycle in $G$ and suppose that, for every cycle $D'\in\mathcal{I}(D)$, every element of $\{D'\}\cup\mathcal{I}(D')$ is $\mathcal{C}$-close. We then have the following.
\begin{enumerate}[label=\arabic*)]
\itemsep-0.1em 
\item For any $D_1, D_2\in\mathcal{I}^m(D)$, where $D_1, D_2$ lie in different equivalence classes of $\mathcal{I}^m(D)$ under $\sim$, we have $d(D_1, D_2)\geq 5\delta\log_2(\delta)+\gamma$; A D
\item For any $D'\in\mathcal{I}^m(D)$ and $C\in\mathcal{C}$ with $C\subseteq G[A(D| \mathcal{I}^m(D))]$, we have $d(D', C)\geq 6\delta\log_2(\delta)+2\gamma$;
\end{enumerate}

\end{prop}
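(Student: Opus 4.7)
The plan is to derive both inequalities by a direct triangle-inequality argument, exploiting the pairwise-far condition $d(C,C')\geq\alpha$ for distinct $C,C'\in\mathcal{C}$ together with the ``companion cycle'' structure provided by 2) of Lemma \ref{ObviousDistanceFact}. The hypothesis that every cycle in $\{D'\}\cup\mathcal{I}(D')$ is $\mathcal{C}$-close for every $D'\in\mathcal{I}(D)$ ensures that Lemma \ref{ObviousDistanceFact} applies to each $D'\in\mathcal{I}^m(D)$: there is a unique $C_{D'}\in\mathcal{C}$ with $C_{D'}\subseteq\textnormal{Int}(D')$ and $d(C_{D'},D')\leq\delta+\gamma$. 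By unpacking Definition \ref{DefnofcCloseRdedBluebinrelat}, this companion cycle is exactly the $C$ that witnesses $\sim$-equivalence, so $D_1\sim D_2$ iff $C_{D_1}=C_{D_2}$.

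For 1), I assume $D_1,D_2\in\mathcal{I}^m(D)$ lie in distinct equivalence classes, so $C_1:=C_{D_1}$ and $C_2:=C_{D_2}$ are distinct and therefore $d(C_1,C_2)\geq\alpha=7\delta\log_2(\delta)+3\gamma$. Applying 1) of Lemma \ref{ObviousDistanceFact} twice gives
\[d(C_1,C_2)\leq d(C_1,D_1)+d(D_1,D_2)+d(D_2,C_2)+4\leq d(D_1,D_2)+2(\delta+\gamma)+4.\]
Rearranging yields $d(D_1,D_2)\geq 7\delta\log_2(\delta)+\gamma-2\delta-4$, which exceeds $5\delta\log_2(\delta)+\gamma$ as long as $\delta\log_2(\delta)\geq\delta+2$, a trivial inequality for the $\delta$ we work with.

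For 2), let $D'\in\mathcal{I}^m(D)$ and $C\in\mathcal{C}^{\subseteq A(D|\mathcal{I}^m(D))}$. Since $C$ lies in $A(D|\mathcal{I}^m(D))$, it is not contained in $\textnormal{Int}(D'')$ for any $D''\in\mathcal{I}^m(D)$; in particular, $C\neq C_{D'}$, so $d(C,C_{D'})\geq\alpha$. One application of 1) of Lemma \ref{ObviousDistanceFact} gives
\[\alpha\leq d(C,C_{D'})\leq d(C,D')+d(D',C_{D'})+2\leq d(C,D')+(\delta+\gamma)+2,\]
and hence $d(C,D')\geq 7\delta\log_2(\delta)+2\gamma-\delta-2\geq 6\delta\log_2(\delta)+2\gamma$, again by the same elementary bound on $\delta$.

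I do not expect a real obstacle here; the argument is essentially the triangle inequality twice. The only minor subtlety is to confirm in 2) that $C_{D'}\neq C$, which is precisely where the definition of $A(D|\mathcal{I}^m(D))$ as the region \emph{outside} every immediate descendant is used.
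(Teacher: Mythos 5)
Your proof is correct and follows essentially the same route as the paper's: obtain companion cycles $C_{D_1}, C_{D_2}$ via 2) of Lemma \ref{ObviousDistanceFact}, observe $C_{D_1}\neq C_{D_2}$ (in part 2), $C\neq C_{D'}$), and apply the triangle inequality of 1) of Lemma \ref{ObviousDistanceFact} together with $\alpha=7\delta\log_2(\delta)+3\gamma$. The only cosmetic difference is that in part 1) the paper derives $C_1\neq C_2$ from the disjointness $\textnormal{Int}(D_1)\cap\textnormal{Int}(D_2)=\varnothing$ (Proposition \ref{equivclassobs}), whereas you deduce it directly from the characterization ``$D_1\sim D_2$ iff $C_{D_1}=C_{D_2}$''; both justifications are valid and lead to the same arithmetic.
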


\begin{proof} We first prove 1). We first note that $\textnormal{Int}(D_1)\cap\textnormal{Int}(D_2)=\varnothing$ by Proposition \ref{equivclassobs}. By 2) of Lemma \ref{ObviousDistanceFact}, we have the following. For each $j\in\{1,2\}$, there exists an element $C_j\in\mathcal{C}$ with $C_j\subseteq\textnormal{Int}(D_j)$, where $d(C_j, D_j)\leq\delta+\gamma$. Thus, we obtain $d(C_1, D_1)+d(C_2, D_2)\leq 2(\delta+\gamma)$. Since $\textnormal{Int}(D_1)\cap\textnormal{Int}(D_2)=\varnothing$ and each of $C_1, C_2$ is a cycle, we have $C_1\neq C_2$, and thus $d(C_1, C_2)\geq\alpha$ by our conditions on $\mathcal{T}$. By two successive applications of 1) of Lemma \ref{ObviousDistanceFact}, we have $d(C_1, D_1)+d(D_1, D_2)+d(C_2, D_2)\geq\alpha-4$. Thus, we obtain $d(D_1, D_2)\geq (\alpha-4)-2(\delta+\gamma)$. Since $\alpha\geq 7\delta\log_2(\delta)+3\gamma$, we obtain $d(D_1, D_2)\geq 5\delta\log_2(\delta)+\gamma$, as desired. This proves 1). Now we prove 2). Since $D'$ is $\mathcal{C}$-close by assumption, there exists a $C'\in\mathcal{C}$ such that $C'\subseteq\textnormal{Int}(D')$ and $d(C', D')\leq\delta+\gamma$. Since $C'\not\subseteq A(D| \mathcal{I}^m(D))$, we have $C\neq C'$. Thus, $d(C, C')\geq\alpha$. By 1) of Lemma \ref{ObviousDistanceFact}, we then have $d(C, D')+d(D', C')\geq\alpha-2$, and thus $d(C, D')\geq (\alpha-2)-(\delta+\gamma)\geq 6\delta\log_2(\delta)+2\gamma$.  \end{proof}

To continue, we need some facts about the intersection of cycles. The following fact is very simple and is stated without proof. 

\begin{prop}\label{CycleIntersectionFacts}

Let $\mathcal{T}=(\Sigma, G, \mathcal{C}, L, C_*)$ be a chart, where $\textnormal{ew}(G)>4$. Let $C_0, C_1$ be cycles in $G$ with $|V(C_j)|\leq 4$ for each $j\in\{0,1\}$, and suppose that there are edges $e, f\in E(C_1)\setminus E(C_0)$ such that $e\in E(\textnormal{Int}(C_0))$ and $f\in E(\textnormal{Ext}(C_0))$. Let $A_{ii}, A_{ie}, A_{ei}, A_{ee}$ be the four cycles contained in the graph $C_0\cup C_1$, such that 
\begin{enumerate}[label=\arabic*)]
\itemsep-0.1em 
\item $\textnormal{Int}(A_{ii})=\textnormal{Int}(C_0)\cap\textnormal{Int}(C_1)$ \emph{and}\ $\textnormal{Int}(A_{ie})=\textnormal{Int}(C_0)\cap\textnormal{Ext}(C_1)$; AND 
\item $\textnormal{Int}(A_{ei})=\textnormal{Ext}(C_0)\cap\textnormal{Ext}(C_1)$ \emph{and}\  $\textnormal{Int}(A_{ee})=\textnormal{Int}(A_{ie})\cup\textnormal{Int}(A_{ei})\cup\textnormal{Int}(A_{ie})$; AND
\item $\textnormal{Ext}(A_{ee})=\textnormal{Ext}(C_0)\cap\textnormal{Ext}(C_1)$.
\end{enumerate}
Then the following hold. 
\begin{enumerate}[label=\arabic*)]
\itemsep-0.1em
\item If $C_0, C_1$ are edge-disjoint then $|E(A_{ee})|+|E(A_{ii})|$ and $|E(A_{ei})|+|E(A_{ie})|$ are both equal to $|E(C_0)|+|E(C_1)|$; AND
\item If $C_0, C_1$ are not edge disjoint then $|E(A_{ee})|+|E(A_{ii})|=E(C_0)|+|E(C_1)|$ and $|E(A_{ei})|+|E(A_{ie})|=|E(C_0)|+|E(C_1)|-2$; AND
\item If $|E(C_0)|=|E(C_1)|=4$, then the lengths of the four cycles $A_{ii}, A_{ie}, A_{ei}, A_{ee}$ have the same parity. 
\end{enumerate}\end{prop}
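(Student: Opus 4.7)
The plan is to exploit the hypothesis $\textnormal{ew}(G) > 4$ to reduce the question to a purely planar counting argument. Since $|E(C_j)| \leq 4$ for each $j \in \{0,1\}$, both $C_0$ and $C_1$ are contractible on $\Sigma$ and each bounds a closed disc. The existence of edges of $C_1$ in both $E(\textnormal{Int}(C_0))$ and $E(\textnormal{Ext}(C_0))$ forces $C_1$ to cross $C_0$, so $V(C_0) \cap V(C_1) \neq \varnothing$ and $C_0 \cup C_1$ sits inside a disc on $\Sigma$. The four regions $\textnormal{Int}(C_0) \cap \textnormal{Int}(C_1)$, $\textnormal{Int}(C_0) \cap \textnormal{Ext}(C_1)$, $\textnormal{Ext}(C_0) \cap \textnormal{Int}(C_1)$, and $\textnormal{Ext}(C_0) \cap \textnormal{Ext}(C_1)$ are the four faces into which $C_0 \cup C_1$ partitions this disc, and their boundary cycles are precisely $A_{ii}$, $A_{ie}$, $A_{ei}$, $A_{ee}$. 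In particular, each edge of $C_0 \cup C_1$ lies on the boundary of exactly two of these four faces.

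For parts 1) and 2) I would classify the edges of $C_0 \cup C_1$. An edge of $C_0 \setminus C_1$ separates $\textnormal{Int}(C_0)$ from $\textnormal{Ext}(C_0)$, and depending on whether it lies in $\textnormal{Int}(C_1)$ or $\textnormal{Ext}(C_1)$ it contributes exactly $1$ to each of the two pair sums $|E(A_{ii})| + |E(A_{ee})|$ and $|E(A_{ie})| + |E(A_{ei})|$. The analysis is symmetric for edges of $C_1 \setminus C_0$. A shared edge, lying on both $C_0$ and $C_1$, separates two regions that are either $A_{ii}$ and $A_{ee}$ (when the cycle orientations agree at that edge) or $A_{ie}$ and $A_{ei}$ (when they disagree), and therefore contributes $2$ to one pair sum and $0$ to the other. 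In the edge-disjoint case, summing these contributions over all edges gives both pair sums equal to $|E(C_0)| + |E(C_1)|$, proving 1). For 2), I would establish that exactly one edge is shared and that this shared edge separates $A_{ii}$ from $A_{ee}$; the bookkeeping then gives the asserted values $|E(C_0)| + |E(C_1)|$ and $|E(C_0)| + |E(C_1)| - 2$.

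For 3), assuming $|E(C_0)| = |E(C_1)| = 4$, I would decompose each $C_j$ into arcs at the crossing vertices in $V(C_0) \cap V(C_1)$. Since $|E(C_j)| = 4$, any two complementary arcs of $C_j$ between the same pair of crossing vertices have lengths summing to $4$, so they have the same parity. Writing each $A$-cycle as a concatenation of an arc from $C_0$ and an arc from $C_1$ and then comparing parities across the four cycles will yield that all four $A$-cycle lengths have the same parity.

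The main obstacle I anticipate is the edge-sharing case of 2): rigorously ruling out $|E(C_0) \cap E(C_1)| \geq 2$ and pinning down which pair sum receives the shared-edge contribution. This will require a careful case analysis using $\textnormal{ew}(G) > 4$ to prevent short noncontractible subconfigurations inside $C_0 \cup C_1$ and to fix the relative orientation of $C_0$ and $C_1$ at any shared edge; sharing two or more edges, together with the required crossing structure, would force either $C_0 = C_1$ or a noncontractible cycle of length at most four, contradicting the edge-width hypothesis.
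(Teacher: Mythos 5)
Your framework---treating $C_0,C_1$ as Jordan curves in a disc and classifying each edge of $C_0\cup C_1$ by the two $A$-regions it separates---is the right one, and parts 1) and 3) are fine on it. (For 3), a route that avoids the ambiguity of arc-decomposition when edges are shared: in the $\mathbb{F}_2$-cycle space one has $A_{ii}+A_{ie}=C_0$ and $A_{ii}+A_{ei}=C_1$, so when $|E(C_0)|=|E(C_1)|=4$ every pair of $A$-cycle lengths differs by an even number.)

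Part 2) is where the proposal fails, because neither of the two facts you plan to establish is a consequence of the hypotheses. First, $|E(C_0)\cap E(C_1)|\geq 2$ cannot be excluded: embed $K_4$ on vertices $a,b,c,d$ in the plane with $C_0=abcda$ the bounding $4$-cycle, $bd$ inside, $ac$ outside, and take $C_1=abdca$. Then $C_0\neq C_1$, they share the two opposite edges $ab,cd$, the hypothesis on $e,f$ holds with $e=bd$, $f=ca$, and $\textnormal{ew}(G)=\infty$. All four $A$-cycles are the facial triangles of $K_4$, so $|E(A_{ii})|+|E(A_{ee})|=6\neq 8$; conclusion 2) already fails here. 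Second, even with a single shared edge, that edge need not lie on $A_{ii}$ and $A_{ee}$: let $C_0=abcda$, place a new vertex $x$ in $\textnormal{Int}(C_0)$ adjacent to $b,c$, put the chord $ca$ in $\textnormal{Ext}(C_0)$, and set $C_1=abxca$. The only shared edge is $ab$, and it bounds $A_{ie}$ and $A_{ei}$; one finds $(|E(A_{ii})|,|E(A_{ie})|,|E(A_{ei})|,|E(A_{ee})|)=(3,5,3,3)$, so the pair sums are $6$ and $8$, swapped from what 2) asserts. The underlying phenomenon is that at a shared edge the two cycles' interiors may lie on opposite sides, and $\textnormal{ew}(G)>4$ does nothing to prevent this. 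Your edge classification actually gives the identities $|E(A_{ii})|+|E(A_{ee})|=|E(C_0)|+|E(C_1)|-2\ell$ and $|E(A_{ie})|+|E(A_{ei})|=|E(C_0)|+|E(C_1)|-2k$, where $k$ and $\ell$ count shared edges of the two types, and the fixed values $k=1,\ell=0$ demanded by 2) are simply not forced. (The only consequence the paper draws from this proposition, in the proof of Proposition 8.8---that one of $A_{ie},A_{ei}$ has at most $4$ edges---is unaffected, since $|E(A_{ie})|+|E(A_{ei})|\leq|E(C_0)|+|E(C_1)|\leq 8$ holds unconditionally; but the literal statement of 2) is stronger than what is true, so the plan to prove it as written cannot succeed.)
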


Now we have the following key fact:

\begin{prop}\label{equivrep114} Let $\mathcal{T}=(\Sigma, G, \mathcal{C}, L, C_*)$ be a critical tiling. Let $D$ be a cycle in $G$ and suppose that, for each $D'\in\mathcal{I}(D)$, every element of $\{D'\}\cup\mathcal{I}(D')$ is $\mathcal{C}$-close. Then there exists a system $\mathcal{D}$ of distinct representatives of the $\sim$-equivalence classes of $\mathcal{I}^m(D)$ under the relation $\sim$ such that $A(D| \mathcal{D})$ is short-inseparable. \end{prop}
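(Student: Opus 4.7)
The plan is to choose $\mathcal{D}$ by a maximization principle. For each $\sim$-equivalence class $[D_l]$ of $\mathcal{I}^m(D)$, part 2 of Lemma \ref{ObviousDistanceFact} assigns a unique cycle $C_l\in\mathcal{C}$ lying in the common interior of every element of $[D_l]$. Among all systems of distinct representatives $\mathcal{D}=\{D^*_1,\dots,D^*_k\}$ with $D^*_l\in[D_l]$, I pick one maximizing $\sum_{l}|E(\textnormal{Int}_{\mathcal{T}}(D^*_l))|$, and aim to show $A:=A_{\mathcal{T}}(D\mid\mathcal{D})$ is short-inseparable.

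Suppose for contradiction $A$ has a separating cycle $T$ of length at most $4$. Since $\textnormal{fw}^*(G)\geq\delta$ forces $T$ contractible in $\Sigma$, an analysis of the two discs of $\Sigma\setminus T$ and which ``removed regions'' (the outside of $D$ or the interior of some $D^*_l$) lie on each side shows that both sides of $T$ contain vertices of $V(G)\setminus V(T)$, so $T\in\mathit{Sep}(G)$. Since $T\neq D,D^*_l$, we have $T\in\mathcal{I}(D)$; among all such cycles I take $T$ maximizing $|E(\textnormal{Int}_{\mathcal{T}}(T))|$. By the standing hypothesis $T$ is $\mathcal{C}$-close with a unique close cycle $C_T\in\mathcal{C}$ (again by Lemma \ref{ObviousDistanceFact}(2)). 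The crux is to show $T\in\mathcal{I}^m(D)$ and $C_T=C_j$ for some $j$: granting this, because $T\subseteq\textnormal{Ext}(D^*_j)$ while both $T$ and $D^*_j$ contain $C_j$ in their interiors, the cycle $T$ must wrap around $D^*_j$ in $\Sigma$, giving $\textnormal{Int}(D^*_j)\subsetneq\textnormal{Int}(T)$, and replacing $D^*_j$ by $T$ in $\mathcal{D}$ strictly increases $\sum|E(\textnormal{Int}(\cdot))|$, contradicting the maximality of $\mathcal{D}$.

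To prove the crux, suppose $T\notin\mathcal{I}^m(D)$ and pick a maximal $D^{\dagger}\in\mathcal{I}^m(D)$ with $T\subsetneq\textnormal{Int}(D^{\dagger})$, say $D^{\dagger}\in[D_j]$. If $C_T=C_j$, the wrapping argument for $T$ and $D^*_j$ yields $D^*_j\subsetneq\textnormal{Int}(T)\subseteq\textnormal{Int}(D^{\dagger})$, which violates the immediacy of $D^*_j$ (unless $D^{\dagger}=D^*_j$, in which case $T\subseteq D^*_j$ forces $T=D^*_j$, again a contradiction). If $C_T\neq C_j$, the chart's gap $d(C_T,C_j)\geq\alpha=7\delta\log_2(\delta)+3\gamma$ combined with $d(C_j,D^*_j),d(C_j,D^{\dagger})\leq\delta+\gamma$ and Lemma \ref{ObviousDistanceFact}(1) gives $d(T,D^*_j)\geq\alpha-4-2(\delta+\gamma)$, in particular ruling out $D^{\dagger}=D^*_j$; the cycles $D^{\dagger}$ and $D^*_j$ then cross, and I apply Proposition \ref{CycleIntersectionFacts} to the crescent cycle $A_{ie}$ with $\textnormal{Int}(A_{ie})=\textnormal{Int}(D^{\dagger})\cap\textnormal{Ext}(D^*_j)$, which contains $T$. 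The main obstacle is then to produce a short separating cycle in $\mathit{Sep}(G)\cap A$ whose interior strictly contains $\textnormal{Int}(T)$, contradicting the maximality of $T$: when $|A_{ie}|\leq 4$ the cycle $A_{ie}$ itself suffices, while when $|A_{ie}|=5$ (the remaining possibility by the parity statement in Proposition \ref{CycleIntersectionFacts}(3)), the same parity forces $|A_{ii}|=3$, and a chord-triangulation argument on the crescent together with further applications of Proposition \ref{CycleIntersectionFacts} to chords of $A_{ie}$ extracts the required contradicting short cycle.
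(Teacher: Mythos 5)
Your approach is genuinely different from the paper's, and it has a real gap at the key step.

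The paper chooses $\mathcal{D}$ to \emph{minimize} $|\mathit{Sep}(A(D|\mathcal{D}))|$ rather than to maximize $\sum_l|E(\textnormal{Int}(D^*_l))|$. Given a separating cycle $T$ of the resulting annulus, the paper locates $D^*\in\mathcal{I}^m(D)$ with $T\subseteq\textnormal{Int}(D^*)$, lets $F\in\mathcal{D}$ be the representative with $D^*\sim F$, and applies Proposition \ref{CycleIntersectionFacts} to the crossing pair $(D^*, F)$. Having shown $|V(A_{ie})|\geq 5$ and hence $|V(A_{ei})|\leq 4$, the paper proves $\textnormal{Int}(A_{ei})$ has no internal vertices, then swaps $F$ for $D^*$ in $\mathcal{D}$ and shows that any new separating cycle of the swapped annulus would have to sit inside $A_{ei}$ — contradiction. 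The crucial point is that the paper works with $A_{ei}$, whose length is controlled ($\leq 4$), and never needs to reason about $A_{ie}$ beyond $|V(A_{ie})|\geq 5$.

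Your argument instead works directly with the crescent $A_{ie}\supseteq T$, and here it breaks down. With both $D^{\dagger}$ and $D^*_j$ of length four, Proposition \ref{CycleIntersectionFacts} only gives $|E(A_{ie})|+|E(A_{ei})|\leq 8$ with $|E(A_{ei})|\geq 3$, so $|E(A_{ie})|\leq 5$, and the value $5$ is not excluded. In that case your claim that ``parity forces $|A_{ii}|=3$'' is false: all four lengths are odd and $|E(A_{ii})|+|E(A_{ee})|=8$, so $|E(A_{ii})|\in\{3,5\}$. More importantly, the final sentence — ``a chord-triangulation argument on the crescent together with further applications of Proposition \ref{CycleIntersectionFacts} to chords of $A_{ie}$ extracts the required contradicting short cycle'' — is not an argument; it asserts precisely the thing you need to prove. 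Since $A_{ie}$ is a $5$-cycle it is not itself a candidate for $\mathit{Sep}(G)$, and it is not clear that any $4^-$-cycle strictly containing $\textnormal{Int}(T)$ and lying in $A$ exists. (A similar but more minor concern: your wrapping step needs $\textnormal{Int}(D^*_j)\subsetneq\textnormal{Int}(T)$ to be strict and $T\neq A_{ie}$, and the candidate cycle to lie in $\mathit{Sep}(A)$, none of which is argued.) Without filling the $|E(A_{ie})|=5$ case, the maximization strategy does not close. The paper's choice of extremal principle (fewest short separating cycles) is what lets it avoid this case entirely by working with $A_{ei}$.
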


\begin{proof} Let $\mathcal{D}$ be a system of distinct representatives of distinct equivalence classes of $\mathcal{I}^m(D)$, and, among all choices of systems of distinct representatives of $\sim$-equivalence classes in $\mathcal{I}^m(D)$, we choose $\mathcal{D}$ so as to minimize the quantity $|\mathit{Sep}(A(D| \mathcal{D}))|$. We claim now that $\mathit{Sep}(A(D|\mathcal{D}))=\varnothing$. Suppose toward a contradiction that $\mathit{Sep}(A(D|\mathcal{D}))\neq\varnothing$, and let $T\in\mathit{Sep}(A(D| \mathcal{D}))$. Note that $T$ is also a separating cycle of length at most 4 in $G$, and since $T\subseteq\textnormal{Int}(D)$, we have $T\in\mathcal{I}(D)$, so there exists a $D^*\in\mathcal{I}^m(D)$ such that $T\subseteq\textnormal{Int}(D^*)$. Now, for some unique $F\in\mathcal{D}$, we have $D^*\sim F$. Note that $D^*\neq F$, or else $T$ is not a separating cycle of $A(D|\mathcal{D})$, since $\textnormal{Int}(F)\cap A(D|\mathcal{D})=F$, which is a facial subgraph of $A(D|\mathcal{D})$. By Proposition \ref{equivclassobs}, there is a unique element $X\in\mathcal{C}$ such that $X\subseteq\bigcap (\textnormal{Int}(D'): D'\sim F\ \textnormal{and}\ D'\in\mathcal{I}^m(D))$, and each element of $\mathcal{I}^m(D)$ which is equivalent to $D^*$ under $\sim$ is of distance at most $\delta+\gamma$ from $X$. Now, $X\subseteq\textnormal{Int}(F)\cap\textnormal{Int}(D^*)$, and, since both $D^*, F$ lie in $\mathcal{I}^m(D)$, we have $D^*\not\in\mathcal{I}(F)$ and $F\not\in\mathcal{I}(D^*)$. Thus, we get $V(D^*)\cap V(F)\neq\varnothing$. so we apply Proposition \ref{CycleIntersectionFacts}. There exist four cycles $A_{ii}, A_{ie}, A_{ei}, A_{ee}$ in $G$, each of which  is a subgraph of $D^*\cup F$, such that $\textnormal{Int}(A_{ii})=\textnormal{Int}(D^*)\cap\textnormal{Int}(F)$ and $\textnormal{Int}(A_{ie})=\textnormal{Int}(D^*)\cap\textnormal{Ext}(F)$, and, analogously, $\textnormal{Int}(A_{ei})=\textnormal{Ext}(D^*)\cap\textnormal{Int}(F)$ and $\textnormal{Int}(A_{ee})=\textnormal{Ext}(D^*)\cap\textnormal{Ext}(F)$. 

\begin{claim} $T\subseteq\textnormal{Ext}(F)$ and $|V(A_{ie})|\geq 5$. Furthermore, $|V(A_{ei})|\leq 4$. \end{claim}

\begin{claimproof} Since $T\subseteq A(D| \mathcal{D})$, we have $T\subseteq\textnormal{Ext}(F)$ and thus $T\subseteq\textnormal{Int}(A_{ie})$. Suppose towards a contradiction that $|V(A_{ie})|\leq 4$. Since $T\in\mathit{Sep}(G)$, there is a $v\in V(\textnormal{Int}(T)\setminus V(T))$. Thus, $A_{ie}\in\mathit{Sep}(G)$, since $A$ separates $v$ from a point of $\textnormal{Ext}(F)\setminus V(F)$. Since $A_{ie}\in\mathit{Sep}(G)$ we have $A_{ie}\in\mathcal{I}(D^*)$. Thus, since $A_{ie}$ is $\mathcal{C}$-close by assumption, there is a cycle $C'\in\mathcal{C}^{\subseteq\textnormal{Int}(A_{ie})}$ with $d(C', A_{ie})\leq\delta+\gamma$ by Lemma \ref{ObviousDistanceFact}. Note that $C'\neq X$ since $X\subseteq\textnormal{Int}(F)$. Since $A_{ie}\subseteq D^*\cup F$, we get $d(C', D^*\cup F)\leq\delta+\gamma$, and we have $d(X, D^*)\leq\delta+\gamma$ and $d(X, F)\leq\delta+\gamma$, so $d(C', X)\leq 2(\delta+\gamma)+2<\alpha$, contradicting our distance conditions on $\mathcal{T}$. Thus, $|V(A_{ie})|\geq 5$. By Proposition \ref{CycleIntersectionFacts}, at least one of $A_{ie}, A_{ei}$ has length at most 4, so $|V(A_{ei})|\leq 4$. \end{claimproof}

Applying the above, we have the following. 

\begin{claim}\label{vieiprop1167} $V(\textnormal{Int}(A_{ei}))=V(A_{ei})$. \end{claim}

\begin{claimproof} Suppose toward a contradiction that $V(\textnormal{Int}(A_{ei}))\neq V(A_{ei})$. In that case, since $|V(A_{ei})|\leq 4$, we have $A_{ei}\in\mathit{Sep}(G)$. Since $A_{ei}\in\mathit{Sep}(G)$ we get that $A_{ei}$ is $\mathcal{C}$-close by assumption, and so there is a cycle $C^{\dagger}\in\mathcal{C}^{\subseteq\textnormal{Int}(A_{ei})}$ with $d(C^{\dagger}, A_{ei})\leq\delta+\gamma$ by Lemma \ref{ObviousDistanceFact}. Note that $C^{\dagger}\neq X*$, since $C^{\dagger}\subseteq\textnormal{Ext}(D^*)$ and $X\subseteq\textnormal{Int}(D^*)$ by definition. But since $A_{ei}\subseteq D^*\cup F$, we have $d(C^{\dagger}, D^*\cup F)\leq\delta+\gamma$, and we also have $d(X, D^*)\leq\delta+\gamma$ and $d(X, F)\leq\delta+\gamma$, so $d(C^{\dagger}, X)\leq 2(\delta+\gamma)+2<\alpha$ by 1) of Lemma \ref{ObviousDistanceFact}, contradicting our distance conditions on $\mathcal{T}$. Thus, our assumption that $V(\textnormal{Int}(A_{ei}))\neq V(A_{ei})$ is false. \end{claimproof}

Now consider the set $\mathcal{D}^{\dagger}:=(\mathcal{D}\cup\{D^*\})\setminus\{F\}$. This is also a system of distinct representatives of the equivalence classes of $\mathcal{I}^m(D)$. Furthermore, $T\not\in\mathit{Sep}(A(D| \mathcal{D}^{\dagger}))$, since $T\subseteq\textnormal{Int}(D^*)$ by assumption. If $\mathit{Sep}(A(D|\mathcal{D}^{\dagger}))\subseteq\mathit{Sep}(A(D| \mathcal{D}))$, then we have $|\mathit{Sep}(A(D |\mathcal{D}^{\dagger}))|<|\mathit{Sep}(A(D|\mathcal{D}))|$, contradicting the minimality of $|\mathit{Sep}(A(D| \mathcal{D}))|$. Thus, there is a $T^{\dagger}\in\mathit{Sep}(A(D| \mathcal{D}^{\dagger}))$ with $T^{\dagger}\not\in\mathit{Sep}(A(D| \mathcal{D}))$.

\begin{claim}\label{tdaggerinsidesec26propabrepl} $T^{\dagger}\subseteq\textnormal{Ext}(D^*)\cap\textnormal{Int}(F)$. \end{claim}

\begin{claimproof} Firstly, since $T^{\dagger}\subseteq A(D| \mathcal{D}^{\dagger})$, we have $T^{\dagger}\subseteq\textnormal{Ext}(D^*)$. Now suppose toward a contradiction that $T^{\dagger}\not\subseteq\textnormal{Int}(F)$. Note that $T^{\dagger}$ is also an element of $\mathit{Sep}(G)$, and $T^{\dagger}\subseteq\textnormal{Int}(D)$. Thus, there is a $D^{**}\in\mathcal{I}^m(D)$ such that $T^{\dagger}\subseteq\textnormal{Int}(D^{**})$, and there is a unique $F^{**}\in\mathcal{D}$ with $D^{**}\sim F^{**}$.

\vspace*{-8mm}
\begin{addmargin}[2em]{0em}
\begin{subclaim}\label{contrathissimplelittlsub}
$F^{**}=F$, and furthermore, $D^{**}\neq D^*$, and $T^{\dagger}\neq D^{**}$.
 \end{subclaim}

\begin{claimproof} If $F^{**}\neq F$, then $T^{\dagger}$ is a separating cycle of $A(D| \mathcal{D})$ if and only if $T^{\dagger}$ is a separating cycle of $A(D| \mathcal{D}^{\dagger})$, contradicting our assumption. Thus, we indeed have $F^{**}=F$, and $D^{**}\sim D^*\sim F$. Since $T^{\dagger}\subseteq\textnormal{Int}(D^{**})$ and $T^{\dagger}\in\mathit{Sep}(A(D| \mathcal{D}^{\dagger}))$, it follows that $D^{**}$ is not a facial subgraph of $A(D| \mathcal{D}^{\dagger})$, and thus $D^{**}\neq D^*$. Suppose now that $T^{\dagger}=D^{**}$. Since $D^{**}\sim D^*$ and $D^{**}\neq D^*$, it follows that $E(T^{\dagger})$ has nonempty intersection with $E(\textnormal{Int}(D^*))\setminus E(D^*)$, contradicting the fact that $T^{\dagger}\subseteq A(D| \mathcal{D}^{\dagger})$. Thus, we have $T^{\dagger}\neq D^{**}$.  \end{claimproof}\end{addmargin}

We claim now that $T^{\dagger}\subseteq\textnormal{Ext}(F)$. Suppose not. Then, since $T^{\dagger}\not\subseteq\textnormal{Int}(F)$ by assumption, $T^{\dagger}$ has an edge in $\textnormal{Int}(F)\setminus E(F)$, and an edge in $\textnormal{Ext}(F)\setminus E(F)$, and thus, $T^{\dagger}$ is also an immediate descentant of $D$, so we get $T^{\dagger}=D^{**}$, contradicting Subclaim \ref{contrathissimplelittlsub}. Thus, we have $T^{\dagger}\subseteq\textnormal{Ext}(F)$. Since $T^{\dagger}\subseteq\textnormal{Int}(D)\cap\textnormal{Ext}(F)$ and $D^{**}\sim D^*$, we have $\textnormal{Int}(T^{\dagger})\subseteq A(D|\mathcal{D})$. Since $T^{\dagger}\in\mathit{Sep}(A(D| \mathcal{D}^{\dagger}))$, we have $V(\textnormal{Int}(T^{\dagger})\setminus V(T^{\dagger})\neq\varnothing$.  Since $T^{\dagger}\subseteq\textnormal{Int}(D^{**})$ and $T^{\dagger}\neq D^{**}$, it follows that $T^{\dagger}$ separates a vertex of $\textnormal{Int}(T^{\dagger})\setminus T^{\dagger}$ from $D$, and thus $T^{\dagger}$ is a separating cycle of $A(D|\mathcal{D})$, contradicting our assumption that $T^{\dagger}\not\in\mathit{Sep}(A(D| \mathcal{D}))$. \end{claimproof}

Applying Claim \ref{tdaggerinsidesec26propabrepl}, we have $T^{\dagger}\subseteq\textnormal{Ext}(D^*)\cap\textnormal{Int}(F)$, and so $T^{\dagger}\subseteq\textnormal{Int}(A_{ei})$. By Claim \ref{vieiprop1167}, we have $V(\textnormal{Int}(A_{ei}))=V(A_{ei})$, which is false, as $T^{\dagger}$ is a separating cycle in $G$. Thus, $\mathit{Sep}(A(D| \mathcal{D}))$ is indeed empty, and $A(D| \mathcal{D})$ is short-inseparable, as desired. \end{proof}

\section{Completing the proof of Theorem \ref{ShortReductionTheoremfirst}}\label{thisisidicritCOMPcomcriTT}

We now prove the following, which is the last ingredient we need in order to complete the proof of Theorem \ref{ShortReductionTheoremfirst}.

\begin{prop}\label{CriticalLemmaSep} Let $\mathcal{T}=(G, \mathcal{C}, L, C_*)$ be a critical chart. Then every $D\in\mathit{Sep}(G)$ is $\mathcal{C}$-\emph{close}. \end{prop}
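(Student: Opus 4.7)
The plan is to argue by contradiction. Suppose some $D \in \mathit{Sep}(G)$ is not $\mathcal{C}$-close, and choose such a $D$ minimizing $|V(\textnormal{Int}_{\mathcal{T}}(D))|$. Since $|V(\textnormal{Int}(D'))| < |V(\textnormal{Int}(D))|$ for every $D' \in \mathcal{I}(D)$, this minimality forces every $D' \in \mathcal{I}(D)$ to be $\mathcal{C}$-close, and since $\mathcal{I}(D') \subseteq \mathcal{I}(D)$, every cycle in $\{D'\} \cup \mathcal{I}(D')$ is $\mathcal{C}$-close. This switches on Propositions \ref{equivrep114} and \ref{RedBlueInequality}; the former supplies a system $\mathcal{D}$ of representatives of the $\sim$-equivalence classes of $\mathcal{I}^m(D)$ such that $A := A_{\mathcal{T}}(D \mid \mathcal{D})$ is short-inseparable.

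I would next build an $L$-coloring of $G$, contradicting criticality. By 2) of Lemma \ref{SsepclaimAndAlmostTriang}, $V(\textnormal{Ext}_{\mathcal{T}}(D))$ admits an $L$-coloring $\phi_{\mathrm{out}}$. The collection $\mathcal{D}$ satisfies the hypotheses of Proposition \ref{equivclassobs} (its members are pairwise non-ancestral, and by minimality every descendant of an element of $\mathcal{D}$ is $\mathcal{C}$-close), so $\bigcup_{D^* \in \mathcal{D}} V(\textnormal{Int}(D^*))$ admits an $L$-coloring $\phi_{\mathrm{in}}$ compatible with $\phi_{\mathrm{out}}$ on any shared vertices. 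These combine into a partial $L$-coloring $\phi := \phi_{\mathrm{out}} \cup \phi_{\mathrm{in}}$ whose only uncolored vertices lie in $V(A) \setminus \bigl(V(D) \cup \bigcup_{D^* \in \mathcal{D}} V(D^*)\bigr)$.

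The intended closing move is to extend $\phi$ across $A$ via Proposition \ref{MainFaceCycleLemma1} with $F_0 := D$ and $\mathcal{F} := \mathcal{D} \cup \mathcal{C}^{\subseteq A}$. Proposition \ref{RedBlueInequality} supplies all pairwise distances $\geq \delta$ among the elements of $\mathcal{D} \cup \mathcal{C}^{\subseteq A}$, so what remains to verify is $d(D, D^*) \geq \delta$ for each $D^* \in \mathcal{D}$ and $d(D, C) \geq \delta$ for each $C \in \mathcal{C}^{\subseteq A}$. Splitting on color: if $D \in \mathit{Sep}_r(G)$, the non-$\mathcal{C}$-closeness of $D$ gives $d(D, C) \geq \delta$ for every $C \in \mathcal{C}^{\subseteq A(D \mid \mathcal{I}^m(D))}$, and the uniqueness statement in Lemma \ref{ObviousDistanceFact}, applied to each non-representative member of $\mathcal{I}^m(D)$, forces any further ring of $\mathcal{C}^{\subseteq A}$ to be at distance $\geq \delta$ from $D$ as well; if $D \in \mathit{Sep}_b(G)$ then $\mathcal{C}^{\subseteq A}$ is empty by definition of blueness, leaving only the condition on $d(D, D^*)$.

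The principal obstacle is the remaining possibility that $d(D, D^*) < \delta$ for some $D^* \in \mathcal{D}$, and this is exactly what Theorem \ref{Main4CycleAnnulusThm} was engineered to handle. For each such close $D^*$, I would fix a shortest $(D, D^*)$-path $P$, set $n := 2|E(P)| + 8$ and $r := \lceil \log_2 n \rceil + 2$, and consider the short-inseparable planar graph $G^{\circ} := \textnormal{Int}_{\mathcal{T}}(D) \cap \textnormal{Ext}_{\mathcal{T}}(D^*)$; since $\mathcal{T}$ is a tiling and the rings of $\mathcal{C}$ are $\alpha(g)$-separated, every face of $G^{\circ}$ other than $D, D^*$ is a triangle and every vertex of $B_{rn+1}(D \cup D^* \cup P) \setminus V(D \cup D^*)$ has a $5$-list. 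The required $B_{r(n-1)}$-extendability of $\phi$ on $V(D \cup D^* \cup P)$ is obtained by applying Proposition \ref{MainFaceCycleLemma1} on a shrunken annulus inside $G^{\circ}$ whose boundary cycles are still far enough from $D$ and $D^*$ for that proposition's distance hypothesis to hold. Theorem \ref{Main4CycleAnnulusThm} then produces a $2$-edge-connected buffer $K$ and an extension of $\phi$ to $V(K)$ such that every remaining component of $G^{\circ} \setminus K$ has a Thomassen outer face, each of which is colorable by Theorem \ref{thomassen5ChooseThm}. Because Proposition \ref{RedBlueInequality} separates distinct elements of $\mathcal{D}$ by $\geq 5\delta \log_2(\delta) + \gamma$, the buffers produced for distinct close $D^*$'s lie at distance $\gg rn$ and do not interact, so the pieces glue into an $L$-coloring of $G$, the desired contradiction.
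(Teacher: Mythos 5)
Your opening moves match the paper: pick a counterexample $D$ minimizing $|V(\textnormal{Int}(D))|$, note that all descendants are then $\mathcal{C}$-close, invoke Proposition \ref{equivrep114} to get a short-inseparable annulus $A = A(D\mid\mathcal{D})$, and try to glue an exterior coloring, interior colorings, and a Proposition \ref{MainFaceCycleLemma1} extension. You also correctly identify the crux: some $D^* \in \mathcal{D}$ may satisfy $d(D, D^*) < \delta$, and Theorem \ref{Main4CycleAnnulusThm} is meant to address that. But your treatment of that obstruction has a genuine gap and several smaller defects.

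The serious gap is the case where $D$ is blue and $\mathcal{I}^m(D)$ has a single $\sim$-equivalence class, i.e.\ $\mathcal{D} = \{D'\}$. In that configuration the annulus $A$ is a thin cylinder between $D$ and $D'$ with no further rings or representatives inside it. The hypothesis of Theorem \ref{Main4CycleAnnulusThm} that $\phi$ extends to $L$-color $B_{r(n-1)}(D\cup D'\cup P)$ cannot be obtained by ``applying Proposition \ref{MainFaceCycleLemma1} on a shrunken annulus'' because there is nothing in $A$ far from $D\cup D'\cup P$: the set $\mathcal{C}^{\subseteq A}\cup(\mathcal{M}_D\setminus\{D'\})$ is empty, so $V(A)$ may lie entirely within $B_{r(n-1)}(D\cup D'\cup P)$, and the ``shrunken'' graph is all of $\textnormal{Int}(D)$, which does not give a smaller tiling to appeal to. The paper handles exactly this: Claim \ref{ReplPrevCase1} shows every obstructing cycle is blue with $|\mathcal{M}_D|=1$, and the final step abandons Theorem \ref{Main4CycleAnnulusThm} in favor of Theorem \ref{cylindertheorem}, applied to the wide, ring-free annulus $A^* = \textnormal{Int}(D^{\textnormal{mc}})\cap\textnormal{Ext}(D_r)$ where $D_r$ is a maximal red descendant and $d(D_r, D^{\textnormal{mc}})\geq\gamma$ precisely because $D^{\textnormal{mc}}$ is blue and not $\mathcal{C}$-close. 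Your proposal never invokes Theorem \ref{cylindertheorem} and cannot close this case.

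A few subsidiary issues: (i) $G^{\circ}:=\textnormal{Int}(D)\cap\textnormal{Ext}(D^*)$ is not short-inseparable in general, since it still contains the other elements of $\mathcal{D}$ and their separating descendants; the paper applies Theorem \ref{Main4CycleAnnulusThm} to $A=A(D\mid\mathcal{M}_D)$, which is short-inseparable by Proposition \ref{equivrep114}. (ii) The components of $A\setminus K$ may contain rings $C\in\mathcal{C}$ or further representatives $F\in\mathcal{D}$ with small lists, so one cannot simply invoke Theorem \ref{thomassen5ChooseThm}; the paper instead reconstitutes them, together with these facial cycles, into a sphere tessellation and applies Theorem \ref{PaIIBlackBoxTessMain} (after verifying the distance conditions via Subclaim \ref{ForEachFDistSubCL}). (iii) You tacitly assume $\textnormal{Int}(D)$ is planar; the paper proves this (Subclaim \ref{USideC*Planar}) before invoking Theorem \ref{Main4CycleAnnulusThm}, and this step is not free. (iv) There is at most one close $D^*\in\mathcal{D}$, by 1) of Lemma \ref{ObviousDistanceFact} plus Proposition \ref{RedBlueInequality}; your gluing of multiple buffers is moot, but not noting the uniqueness obscures that the annulus case split (red or $|\mathcal{M}_D|>1$, versus blue and $|\mathcal{M}_D|=1$) is exactly the dichotomy that matters.
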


\begin{proof} Suppose toward a contradiction that there exists a $D^{\textnormal{mc}}\in\mathit{Sep}(G)$ which is not $\mathcal{C}$-close, and furthermore, among all elements of $\mathit{Sep}(G)$ which are not $\mathcal{C}$-close, we choose $D^{\textnormal{mc}}$ so as to minimize $|V(\textnormal{Int}(D^{\textnormal{mc}}))|$. Note that $\mathcal{I}^m(D^{\textnormal{mc}})\neq\varnothing$ or else $D^{\textnormal{mc}}$ is a minimal element of $\mathit{Sep}(G)$ and is thus $\mathcal{C}$-close by 2) of Lemma \ref{BlueCycleObsMinimalRed}. For each $D\in\{D^{\textnormal{mc}}\}\cup\mathcal{I}(D^{\textnormal{mc}})$, every element of $\mathcal{I}^m(D)$ is $\mathcal{C}$-close by our choice of $D^{\textnormal{mc}}$. By Proposition \ref{equivrep114}, there exists a system $\mathcal{M}_D\subseteq\mathcal{I}^m(D)$ of distinct representatives of the $\sim$-equivalence classes of $\mathcal{I}^m(D)$ such that $A(D| \mathcal{M}_D)$ is short-inseparable. Given a $D\in\{D^{\textnormal{mc}}\}\cup\mathcal{I}(D^{\textnormal{mc}})$, we say that $D$ is an \emph{obstructing cycle} if there exists a $D'\in\mathcal{M}_D$ such that $d(D, D')<\delta$. If there exist two distinct $D', D''\in\mathcal{M}_D$ such that $d(D, D'')<\delta$ and $d(D, D')<\delta$, then $d(D', D'')\leq 2\delta$ by 1) of Lemma \ref{ObviousDistanceFact}, contradicting Proposition \ref{RedBlueInequality}. Thus, if $D$ is an obstructing cycle, the corresponding $D'\in\mathcal{M}_D$ is unique. 

\begin{claim}\label{MCObstructCycle} $D^{\textnormal{mc}}$ is  an obstructing cycle. \end{claim}

\begin{claimproof} Suppose toward a contradiction that $D^{\textnormal{mc}}$ is not an obstructing cycle, and set $A^{\textnormal{mc}}:=A(D^{\textnormal{mc}}| \mathcal{M}_{D^{\textnormal{mc}}})$. 

\vspace*{-8mm}
\begin{addmargin}[2em]{0em}
\begin{subclaim}\label{usedtobobgsab26lastofrepl}
For each $D\in\{D^{\textnormal{mc}}\}\cup\mathcal{M}_{D^{\textnormal{mc}}}$, the following hold.
\begin{enumerate}[label=\arabic*)]
\item For each $C\in\mathcal{C}^{\subseteq A^{\textnormal{mc}}}$, we have $d(C, D)\geq\delta$; AND
\item For each $D'\in\{D^{\textnormal{mc}}\}\cup\mathcal{M}_{D^{\textnormal{mc}}}$ with $D'\neq D$, we have $d(D, D')\geq\delta$. 
\end{enumerate}
\end{subclaim}

\begin{claimproof} We break this into two cases.

\textbf{Case 1:} $D=D^{\textnormal{mc}}$.

Since $D^{\textnormal{mc}}$ is not an obstructing cycle, we have $d(D^{\textnormal{mc}}, D')\geq\delta$ for each $D'\in\mathcal{M}_{D^{\textnormal{mc}}}$. Now we check that $d(C, D^{\textnormal{mc}})\geq\delta$ for all $C\in\mathcal{C}^{\subseteq A^{\textnormal{mc}}}$. If $D^{\textnormal{mc}}$ is a blue cycle, then this immediately follows from the fact that $D^{\textnormal{mc}}$ is not an obstructing cycle, since each element of $\mathcal{C}^{\subseteq A^*}$ is separated from $D^{\textnormal{mc}}$ by an element of $\mathcal{M}_{D^{\textnormal{mc}}}$. On the other hand, if $D^{\textnormal{mc}}$ is a red cycle, then this is true by our assumption that $D^{\textnormal{mc}}$ is not $\mathcal{C}$-close.

\textbf{Case 2:} $D\neq D^{\textnormal{mc}}$

In this case, for any $C\in\mathcal{C}^{\subseteq A^{\textnormal{mc}}}$, we have $d(C,D)\geq\delta$ by Proposition \ref{RedBlueInequality}. so it suffices to prove that 2) holds for each $D'\in\mathcal{M}_{D^{\textnormal{mc}}}$, since, if $D'=D^{\textnormal{mc}}$, then we are back to Case 1 with the roles of $D, D'$ interchanged. Applying Proposition \ref{RedBlueInequality} again, it follows that, for any $ D'\in\mathcal{M}_{D^{\textnormal{mc}}}$, we have $d(D, D')\geq\delta$, so we are done. \end{claimproof}\end{addmargin}

By Proposition \ref{equivclassobs}, there is an $L$-coloring $\phi$ of $\bigcup (V(\textnormal{Int}(D)): D\in\mathcal{M}_{D^{\textnormal{mc}}})$. By Lemma \ref{SsepclaimAndAlmostTriang}, $V(\textnormal{Ext}(D^{\textnormal{mc}}))$ is $L$-colorable. By Subclaim \ref{usedtobobgsab26lastofrepl}, the graphs $\bigcup (V(\textnormal{Int}(D)): D\in\mathcal{M}_{D^{\textnormal{mc}}})$ and $V(\textnormal{Ext}(D^{\textnormal{mc}}))$ are of distance at least $\delta$ apart in $G$, so $\phi\cup\psi$ is a proper $L$-coloring of its domain. Applying Subclaim \ref{usedtobobgsab26lastofrepl}, together with Proposition \ref{MainFaceCycleLemma1}, $\phi\cup\psi$ extends to an $L$-coloring of $A^{\textnormal{mc}}$, and thus $G$ is $L$-colorable, which is false. This proves Claim \ref{MCObstructCycle}. \end{claimproof}

\begin{claim}\label{ReplPrevCase1} For all obstructing cycles $D\in\{D^{\textnormal{mc}}\}\cup\mathcal{I}(D^{\textnormal{mc}})$, $D$ is blue and $|\mathcal{M}_{D}|=1$. \end{claim}

\begin{claimproof} Suppose not. There exists an obstructing cycle $D\in \{D^{\textnormal{mc}}\}\cup\mathcal{I}(D^{\textnormal{mc}})$ such that either $D$ is red or $|\mathcal{M}_{D}|>1$. By definition there is a unique cycle $D'\in\mathcal{M}_{D}$ with $d(D, D')<\delta$. Let $A:=A_{\mathcal{T}}(D\mid\mathcal{M}_D)$ and let $U$ be the unique component of $\Sigma\setminus D$ with $C_*\not\subseteq\textnormal{Cl})(U)$.

\vspace*{-8mm}
\begin{addmargin}[2em]{0em}
\begin{subclaim}\label{USideC*Planar} $U$ is an open disc. \end{subclaim}

\begin{claimproof} Suppose not. Since $\textnormal{fw}^*(G)>1$, we can regard $\textnormal{Ext}(D)$ as a planar embedding. By 2) of Lemma \ref{SsepclaimAndAlmostTriang}, there is an $L$-coloring $\phi$ of $V(\textnormal{Int}(D))$. Let $H:=\textnormal{Ext}(D)$, regarded as an embedding on $\mathbb{S}^2$. Consider the tuple $\mathcal{T}':=(\mathbb{S}^2, H, \mathcal{C}^{\subseteq H}, L^D_{\phi}, C_*)$. Since $|V(H)|<|V(G)|$ and $\phi$ does not extend to $L$-color $H$, $\mathcal{T}'$ is not a tessellation. Since $\textnormal{fw}^*(H)=\infty$ and $H$ is chord-triangulated, the distance conditions of Definition \ref{TilingDefnM} are violated. Since the genus has only decreased, the distance conditions $\mathcal{T}'$ need to satisfy have only weakened, and so there is an $X\in\mathcal{C}^{\subseteq H}$ with $d(X, D)<\alpha(0)=7\delta(0)\log_2(\delta(0))+3\gamma$ On the other hand, since $D'$ is $\mathcal{C}$-close, it follows from 2) of Lemma \ref{ObviousDistanceFact} that there is an $X'\in\mathcal{C}$ with $X'\subseteq\textnormal{Int}(D')$ and $d(X', D')\leq\delta(g)+\gamma$. Note that $X\neq X'$ and,  by assumption, $d(D, D')<\delta(g)$, so, by two applications of 1) of Lemma \ref{ObviousDistanceFact}, we obtain 
\[d(X, X')<7\delta(0)\log_2(\delta(0))+4\gamma+2\delta(g)+4<\alpha(g)\] 
where the rightmost inequality holds since $g\geq 1$ and $\beta\geq\gamma$, so we contradict our distance conditions on $\mathcal{T}$. 
 \end{claimproof}\end{addmargin}

Note that it follows from Subclaim \ref{USideC*Planar} that $\textnormal{Int}(D)$ can be regarded as a planar embedding with outer cycle $D$. Let $P$ be a shortest $(D, D')$-path in $G$. Note that $P\subseteq A$ and $|E(P)|\leq\delta-1$. We now need to apply Theorem \ref{Main4CycleAnnulusThm}. Let $n:=2|E(P)|+8$ and $n':=\lceil\log_2(n)\rceil+2$. 

\vspace*{-8mm}
\begin{addmargin}[2em]{0em}
\begin{subclaim}\label{SigmaExtFromDD'Inwards}
\textcolor{white}{aaaaaaaaaaaaaaa}
\begin{enumerate}[label=\alph*)]
\itemsep-0.1em
\item $V(A)\not\subseteq B_{n'(n-1)}(D\cup D'\cup P)$, and for each $v\in V(A)\cap B_{n'n+1}(D\cup D'\cup P)$, every facial subgraph of $A$ containing $v$, except possibly $D, D'$, is a triangle, and, if $v\not\in V(D\cup D')$, then $|L(v)|\geq 5$
\item There is an $L$-coloring $\sigma$ of $G\setminus (V(A)\setminus B_{n'(n-1)}(D\cup D'\cup P))$.
\end{enumerate}
\end{subclaim}

\begin{claimproof} By assumption, either $D$ is red or $|\mathcal{M}_D|>1$, so there is a facial cycle $X$ of $A$ where $X\in\mathcal{C}\cup (\mathcal{M}_D\setminus\{D'\})$. Now, by Proposition \ref{RedBlueInequality}, we have $d(X, D')\geq 5\delta\log_2(\delta)+\gamma$, so we get $d(X, D\cup D'\cup P)\geq 5\delta\log_2(\delta)+\gamma-(\delta-1)>nn'+1>n'(n-1)$. Since this holds for each $X\in\mathcal{C}\cup (\mathcal{M}_D\setminus\{D'\})$, we immediately have a). Now we prove b). Let $G'$ be an embedding on $\Sigma$ obtained from $G$ by deleting all the vertices of $V(A)\setminus B_{n'(n-1)}(D\cup D'\cup P)$. Let $\mathcal{C}':=\mathcal{C}^{\subseteq G'}$. By Lemma \ref{TriangulationCorMainLmemmaused1}, there is an embedding $G''$ obtained from $G'$ by adding edges to $G'$ such that $G''$ is chord-triangulated, where the elements of $\mathcal{C}^{\subseteq G'}$ are all facial subgraphs of $G'$ and are still of pairwise distance at least $\alpha(g)$ apart. Now, $G'$ is obtained from $G$ by deleting vertices in $\textnormal{Cl}(U)\setminus D$ and $G''$ is necessarily obtained from $G'$ only by adding edges only to $\textnormal{Cl}(U)$. Since $U$ is a disc and $D$ has length four, we have $\textnormal{fw}^*(G'')=\textnormal{fw}^*(G')=\textnormal{fw}^*(G)$. Thus, $(\Sigma, G'', \mathcal{C}^{\subseteq G''}, L, C_*)$ is a tiling. By a), we have $|V(G'')|<|V(G)|$, so, by minimality, $G''$ is indeed $L$-colorable, and thus so is $G'$. This proves b). \end{claimproof}\end{addmargin}

Now, let $\sigma$ be as in Subclaim \ref{SigmaExtFromDD'Inwards} and let $\phi$ be the restriction of $\sigma$ to the domain $V(D\cup D'\cup P)$. Since $G$ is not $L$-colorable, $\phi$ dos not extend to $L$-color $A$. By Theorem \ref{Main4CycleAnnulusThm}, since $A$ is 2-connected and short-inseparable, there is a 2-edge-connected subgraph $K$ of $A$ and an extension of $\phi$ to a partial $L$-coloring $\psi$ of $V(K)$ such that
\begin{enumerate}[label=\arabic*)]
\itemsep-0.1em
\item $V(K)$ is $(L, \psi)$-inert in $A$ and $V(K)\subseteq B_{nn'}(F\cup F'\cup P)$; AND
\item For each connected component $H$ of $A\setminus K$, the outer face of $H$ is a Thomassen facial subgraph of $H$ with respect to $L_{\psi}$.
\end{enumerate}

Since $\phi$ does not extend to $L$-color $A$ and $K$ is $(L, \psi)$-inert in $A$, there is a connected component $H$ of $A\setminus K$ which is not $L_{\psi}$-colorable. Let $H^o$ be the outer face of $H$. 

\vspace*{-8mm}
\begin{addmargin}[2em]{0em}
\begin{subclaim}\label{ForEachFDistSubCL} For each $F\in\mathcal{M}_D\setminus\{D'\}$ and each $C\in\mathcal{C}$, we have $d(K, F)\geq\delta(0)$ and  $d(K, C)\geq\delta(0)$.
\end{subclaim}

\begin{claimproof} Let $F, C$ be as in the subclaim. Recall that $D'$ is $\mathcal{C}$-close, so there is an $X\in\mathcal{C}$ with $X\subseteq\textnormal{Int}(D')$ and $d(D', X)\leq\delta(g)+\gamma$. Thus, we get $d(D\cup D'\cup P, X)\leq\delta(g)+\gamma+(\delta(g)-1)$, and since $nn'\leq 3\delta(g)\log_2(\delta(g))$, it follows that, for each $v\in V(K)$, we have $d(v, X)\leq\delta(g)+\gamma+(\delta(g)-1)+nn'\leq 4\delta(g)\log_2(\delta(g))+\gamma$.

We conclude that $d(K, C)\geq\alpha(g)-(4\delta(g)\log_2(\delta(g))+\gamma)>\delta(0)$. Likewise, we have $d(X, F)\geq d(D', F)$, so by Proposition \ref{RedBlueInequality}, we get $d(X, F)\geq 5\delta\log_2(\delta)+\gamma$ as well, so we also get  $$d(K, F)\geq 5\delta(g)\log_2(\delta(g))+\gamma-(4\delta(g)\log_2(\delta(g))+\gamma)>\delta(0)$$ as desired. \end{claimproof}\end{addmargin}

Now, let $\mathcal{N}:=\{F\in\mathcal{M}_D\setminus\{D'\}: F\subseteq H\}$ and $S:=\bigcup_{F\in\mathcal{N}}V(F)$. By 3) of Proposition \ref{equivclassobs}, there is an $L$-coloring $\tau$ of $\bigcup_{F\in\mathcal{N}}V(\textnormal{Int}(F))$. By Subclaim \ref{ForEachFDistSubCL}, the union $\tau\cup\psi$ is a proper $L$-coloring of its domain. Furthermore, since $H$ is not $L_{\psi}$-colorable, it is also not $L_{\psi}^S$-colorable. Consider the tuple $\mathcal{T}':=(\mathbb{S}^2, H, \mathcal{C}^{\subseteq H}\cup\{H^o\}\cup\mathcal{N}, L_{\psi\cup\tau}^S, H^o)$. Now, every facial subgraph of $H$, except those among $\mathcal{C}^{\subseteq H}\cup\{H^o\}\cup\mathcal{N}$ is a triangle, so $\mathcal{T}'$ is chord-triangulated. Since $A$ is short-inseparable, $H$ is also short-inseparable, so $\mathcal{T}'$ is a tessellation with underlying surface $\mathbb{S}^2$. Every element of $\mathcal{C}^{\subseteq H}\cup\{H^o\}\cup\mathcal{N}$ is either precolored cycle of length at most four or has a precolored path of length at most one. Furthermore, it follows from the triangulation conditions in Subclaim \ref{SigmaExtFromDD'Inwards} that every vertex of $H^o$ has distance precisely one from $K$, so, by Subclaim \ref{ForEachFDistSubCL}, $\mathcal{T}'$ is a $(\delta(0), 4)$-tessellation. Thus, by Theorem \ref{PaIIBlackBoxTessMain}, $H$ is $L_{\psi\cup\tau}^S$-colorable, a contradiction. Thus proves Claim \ref{ReplPrevCase1}. \end{claimproof}

Since $D^{\textnormal{mc}}$ is an obstructing cycle by assumption, it follows from Claim \ref{ReplPrevCase1} that $D^{\textnormal{mc}}\in\mathit{Sep}_b(G)$, and so $\mathit{Sep}_r(G)\cap\mathcal{I}(D^{\textnormal{mc}})\neq\varnothing$ by 1) of Lemma \ref{BlueCycleObsMinimalRed}. Let $D_r$ be a maximal element of $\mathit{Sep}_r(G)\cap\mathcal{I}(D^{\textnormal{mc}})$.

\begin{claim}\label{simpletwosentblueclaiminlastof25} For each $D\in\{D^{\textnormal{mc}}\}\cup\mathcal{I}(D^{\textnormal{mc}})$, if $D$ is blue, then $D$ is an obstructing cycle. \end{claim}

\begin{claimproof} If $D=D^{\textnormal{mc}}$, then this holds by assumption. If $D\neq D^{\textnormal{mc}}$, then $D$ is $\mathcal{C}$-close by assumption, and thus there exists a $D^*\in\mathcal{I}^m(D)$ such that $d(D, D^*)<\gamma$. Since $\gamma\leq\delta$, $D$ is indeed an obstructing cycle. \end{claimproof}

Now set $A^*:=\textnormal{Int}(D^{\textnormal{mc}})\cap\textnormal{Ext}(D_r)$.

\begin{claim}\label{5listsonlyintheann}
For every $v\in V(A^*)\setminus V(D^{\textnormal{mc}}\cup D_r)$, $|L(v)|\geq 5$.
\end{claim}

\begin{claimproof} Since $D_r$ is a red cycle and $D_r$ is $\mathcal{C}$-close by assumption, there exists an $X\in\mathcal{C}$ with $X\subseteq\textnormal{Int}(D_r)$ and $d(X, D_r)<\delta$. To prove the claim, it suffices to show there does not exist a $C\in\mathcal{C}$ such that $C\subseteq A^*$. Suppose toward a contradiction that such a $C$ exists. Since $D^{\textnormal{mc}}$ is a blue cycle, there exists a red cycle $D_r'\in\mathcal{I}(D^{\textnormal{mc}})$ such that $D_r'\neq D_r$, $D_r'\subseteq A^*$, and $D_r'$ separates $C$ from $D^{\textnormal{mc}}$. Since $D_r'$ is $\mathcal{C}$-close by assumption, there exists a cycle $C^{\dagger}\in\mathcal{C}$ with $C^{\dagger}\subseteq\textnormal{Int}(D_r')$ and $d(C^{\dagger}, D_r')<\delta$. Since $C^{\dagger}\neq X$, we have $d(C^{\dagger}, X)\geq\alpha$ and thus $d(D_r, D_r')\geq\alpha-2\delta$ by 1) of Lemma \ref{ObviousDistanceFact}. It follows that $D_r\not\sim D_r'$. Let $\mathcal{U}:=\{D\in\mathit{Sep}(G): D_r\cup D_r'\subseteq\textnormal{Int}(D)\}$. Note that $\mathcal{U}\neq\varnothing$, since $D^{\textnormal{mc}}\in\mathcal{U}$. Among all elements of $U$, choose $D^u\in\mathcal{U}$ so as to minimize the quantity $|E(\textnormal{Int}_G(D^u))|$. Since $D_r$ is a maximal element of $\mathit{Sep}_r(G)\cap\mathcal{I}(D^{\textnormal{mc}})$ and $D_r'\not\subseteq\textnormal{Int}(D_r)$, we have $D^u\in\mathit{Sep}_b(G)$. By Claim \ref{simpletwosentblueclaiminlastof25}, every blue cycle in $\{D^{\textnormal{mc}}\}\cup\mathcal{I}(D^{\textnormal{mc}})$ is an obstructing cycle, so we have $|\mathcal{M}_{D^u}|=1$. If $D_r\in\mathcal{I}^m(D^u)$, then, since $|\mathcal{M}_{D^u}|=1$ and $D_r\not\sim D_r'$, it follows that $D_r'$ is a descendant of $D_r$, which is false. The same argument shows that $D_r'\not\in\mathcal{I}^m(D^u)$. Let $D'$ be the lone element of $\mathcal{M}_{D^u}$. Note that $D_r\cup D_r'\not\subseteq\textnormal{Int}(D')$, or else $D'$ contradicts the minimality of $|E(\textnormal{Int}_G(D^u))|$. Since neither $D_r$ nor $D_r'$ lies in $\mathcal{I}^m(D^u)$, we have $D_r\not\sim D'$ and $D_r'\not\sim D'$. Thus, at least one of $D_r, D_r'$ is separated from $D^u$ by the deletion of $D'$, and $\mathcal{I}^m(D^u)$ contains at least one equivalence class distinct from that of $D'$, contradicting the fact that $|\mathcal{M}_{D^u}|=1$. \end{claimproof}

Since $D^{\textnormal{mc}}\in\mathit{Sep}_b(G)$ and $D^{\textnormal{mc}}$ is not $\mathcal{C}$-close, we have $d(D_r, D^{\textnormal{mc}})\geq\gamma$. By Lemma \ref{SsepclaimAndAlmostTriang}, there is an $L$-coloring $\phi$ of $V(\textnormal{Int}(D_r))$ and an $L$-coloring $\psi$ of $V(\textnormal{Ext}(C))$. Since $D_r, D^{\textnormal{mc}}$ are of distance at least $\gamma$ apart, $\phi\cup\psi$ a proper $L$-coloring of its domain. Since each vertex of $A^*\setminus (D^{\textnormal{mc}}\cup D_r)$ has an $L$-list of size at least five, it follows from Theorem \ref{cylindertheorem} that $\phi\cup\psi$ extends to an $L$-coloring of $A^*$, so $G$ is $L$-colorable, contradicting the fact that $\mathcal{T}$ is a counterexample. This completes the proof of Proposition \ref{CriticalLemmaSep}. \end{proof}

With the results above in hand, we are ready to finish the proof of Theorem \ref{ShortReductionTheoremfirst}.

\begin{thmn} [\ref{ShortReductionTheoremfirst}] 
All tilings are colorable. 
\end{thmn}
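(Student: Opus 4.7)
The plan is to suppose for contradiction that a critical tiling $\mathcal{T}=(\Sigma, G, \mathcal{C}, L, C_*)$ exists and then construct an $L$-coloring of $G$ from the machinery of Sections~\ref{MainRedThmForTessSec}--\ref{thisisidicritCOMPcomcriTT}. Since $C_*\in\mathcal{C}$ is a facial cycle of $G$ and therefore not itself separating, $\mathcal{I}(C_*)=\mathit{Sep}(G)$ and $\mathcal{I}^m(C_*)$ is the set of maximal separating cycles. Lemma~\ref{GeneralizedShortPathObsx} gives that $C_*$ is a cycle, Lemma~\ref{SsepclaimAndAlmostTriang} gives $\mathit{Sep}(G)\neq\varnothing$, and Proposition~\ref{CriticalLemmaSep} gives that every $D\in\mathit{Sep}(G)$ is $\mathcal{C}$-close, so the hypothesis of Proposition~\ref{equivrep114} is met with $D=C_*$. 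This produces a system $\mathcal{D}\subseteq\mathcal{I}^m(C_*)$ of representatives of the $\sim$-equivalence classes of $\mathcal{I}^m(C_*)$ for which $A:=A_{\mathcal{T}}(C_*\mid\mathcal{D})$ is short-inseparable.

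By Proposition~\ref{equivclassobs}(3) there is an $L$-coloring $\phi$ of $\bigcup_{D\in\mathcal{D}}V(\textnormal{Int}(D))$, which in particular colors $S:=\bigcup_{D\in\mathcal{D}}V(D)$. I would then form the auxiliary chart $\mathcal{T}_A:=(\Sigma, A, \mathcal{D}\cup\mathcal{C}^{\subseteq A}, L^{S}_{\phi}, C_*)$, in which each $D\in\mathcal{D}$ becomes a ring with $\mathbf{P}_D:=D$ precolored by $\phi$, while each $C\in\mathcal{C}^{\subseteq A}$ retains the $\mathbf{P}_C$ it had in $\mathcal{T}$. The verification that $\mathcal{T}_A$ is a $(\delta(g),4)$-tessellation with $\textnormal{fw}(A)\geq\delta(g)$ proceeds as follows: chord-triangulation is inherited from $\mathcal{T}$ because removing the interior of each $D\in\mathcal{D}$ merely replaces that region by a single face bounded by $D$; iterated use of Proposition~\ref{FWstarToFWObs} supplies the face-width bound; and the required pairwise-distance lower bound $\delta(g)$ between rings follows from Proposition~\ref{RedBlueInequality}(1) for pairs inside $\mathcal{D}$, from the built-in $\alpha(g)$-separation in $\mathcal{T}$ for pairs inside $\mathcal{C}^{\subseteq A}$, and for mixed pairs by invoking Lemma~\ref{ObviousDistanceFact}(1)--(2) together with the unique $\mathcal{C}$-anchor of each $\sim$-class supplied by Proposition~\ref{equivclassobs}(2).

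Theorem~\ref{PaIIBlackBoxTessMain} then yields an $L^{S}_{\phi}$-coloring $\psi$ of $A$, which by construction restricts to $\phi$ on $S$. Since $V(A)\cup\bigcup_{D\in\mathcal{D}}V(\textnormal{Int}(D))=V(G)$ and the two colorings overlap only on $S$, where they agree, their union $\phi\cup\psi$ is a proper $L$-coloring of $G$, contradicting the non-colorability of $\mathcal{T}$.

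The main obstacle is the distance check for mixed ring pairs $(F,C)$ with $F\in\mathcal{D}$ and $C\in\mathcal{C}^{\subseteq A}\setminus\{C_*\}$, since such a $C$ may be captured only by some non-representative immediate descendant $F^*\sim F$, so $C$ need not belong to $A_{\mathcal{T}}(C_*\mid\mathcal{I}^m(C_*))$ and Proposition~\ref{RedBlueInequality}(2) does not apply directly. The fix is to observe that $C$ cannot equal the anchor $C_j$ of the class $[F]$ (otherwise $C\subseteq\textnormal{Int}(F)$, contradicting $C\in\mathcal{C}^{\subseteq A}$), so $d(C,C_j)\geq\alpha(g)$ by the distance hypothesis on $\mathcal{T}$, and combining this with $d(C_j,F)\leq\delta(g)+\gamma$ via Lemma~\ref{ObviousDistanceFact}(1) gives $d(F,C)\geq\alpha(g)-\delta(g)-\gamma-2\geq\delta(g)$, closing the verification.
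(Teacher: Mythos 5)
Your proof is correct and follows essentially the same route as the paper's: get $\mathit{Sep}(G)\neq\varnothing$ from Lemma~\ref{SsepclaimAndAlmostTriang}, use Proposition~\ref{CriticalLemmaSep} so that Proposition~\ref{equivrep114} (applied with $D=C_*$) supplies a system $\mathcal{D}\subseteq\mathcal{I}^m(C_*)$ of $\sim$-representatives with $A(C_*\mid\mathcal{D})$ short-inseparable, color the interiors by Proposition~\ref{equivclassobs}, and extend across the annulus by Theorem~\ref{PaIIBlackBoxTessMain}. The only structural difference is that the paper packages the last step through Proposition~\ref{MainFaceCycleLemma1} (taking $F_0=C_*$ and $\mathcal{F}=\mathcal{M}_*\cup\mathcal{C}^{\subseteq A}$), whereas you build the tessellation $\mathcal{T}_A$ and invoke the black box directly; these are the same argument. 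You also flag, and correctly repair, a slight imprecision in the paper's citation of Proposition~\ref{RedBlueInequality}(2) for the mixed $(F,C)$ distance check: the statement there requires $C\subseteq A(C_*\mid\mathcal{I}^m(C_*))$, which can be a proper subgraph of $A(C_*\mid\mathcal{D})$; your anchor argument via Proposition~\ref{equivclassobs}(2) handles the extra case, and equivalently one can observe that the proof of \ref{RedBlueInequality}(2) really only uses that $C$ differs from the $\mathcal{C}$-close certificate $C'\subseteq\textnormal{Int}(D')$, which already follows from $C\subseteq\textnormal{Ext}(D')$.
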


\begin{proof} Suppose not. Thus, there exists a critical tiling $\mathcal{T}=(\Sigma, G, \mathcal{C}, L, C_*)$. By Lemma \ref{SsepclaimAndAlmostTriang}, $\mathit{Sep}(G)\neq\varnothing$, so let $\mathcal{M}$ be the set of maximal elements of $\mathit{Sep}(G)$. By Proposition \ref{CriticalLemmaSep}, for each $M\in\mathcal{M}$ and each $D\in\{M\}\cup\mathcal{I}(M)$, $D$ is $\mathcal{C}$-close. Thus, $\mathcal{M}$ admits a partition into equivalence classes under the relation $\sim$, and furthermore, by Proposition \ref{equivrep114}, there exists a system $\mathcal{M}_*\subseteq\mathcal{M}$ of distinct representatives of the $\sim$-equivalence classes of $\mathcal{M}$ such that $A(C_*| \mathcal{M}_*)$ is short-inseparable. Note that $A(C_*| \mathcal{M}_*)=\bigcap_{M\in\mathcal{M}_*}\textnormal{Ext}(M)$. By Proposition \ref{equivclassobs}, there is an $L$-coloring $\phi$ of $\bigcup_{M\in\mathcal{M}_*}V(\textnormal{Int}(M))$. By Proposition \ref{RedBlueInequality}, the following distance conditions are satisfied.
\begin{enumerate}[label=\arabic*)]
\item For any $D\in\mathcal{M}_*$ and any $C\in\mathcal{C}$ with $C\subseteq A(C_*| \mathcal{M}_*)$, we have $d(C, D)\geq\delta$; \emph{AND}
\item For any distinct $D, D'\in\mathcal{M}_*$, we have $d(D, D')\geq\delta$. 
\end{enumerate}

Thus, by Proposition \ref{MainFaceCycleLemma1}, $\phi$ extends to an $L$-coloring of $G$, contradicting the fact that $\mathcal{T}$ is  not colorable. This completes the proof of Theorem \ref{ShortReductionTheoremfirst} and thus proves Theorem \ref{5ListHighRepFacesFarMainRes}.  \end{proof}


\begin{thebibliography}{25}
\thispagestyle{fancy}

\bibitem{BMoharFaceWidthEmbedded}
B. Mohar, Face-Width of Embedded Graphs,
\emph{Math. Slovaca} 47 (1997), No. 1, 35-63

\bibitem{JNevinPapISeq}
J. Nevin, Distant 2-Colored Components on Embeddings Part I: Connecting Faces, \emph{arXiv: 2207.12531}

\bibitem{JNevinPapIISeq}
J. Nevin, Distant 2-Colored Components on Embeddings Part II: The Short-Inseparable Case, \emph{arXiv: 2212.10506}

\bibitem{LinBoundCritDiskPap}
L. Postle, R. Thomas, Five-list-coloring graphs on surfaces II. A linear bound for critical graphs in a disk, J. Combin. Theory Ser. B 119 (2016) 42–65

\bibitem{HyperbolicFamilyColorSurfPap}
  L. Postle and R. Thomas, Hyperbolic families and coloring graphs on surfaces,
  \emph{Transactions of the American Mathematical Society Ser. B} 5 (2018), 167-221

\bibitem{RobSeyFWidthCite}
N. Robertson and P.D. Seymour, Graph Minors XII. Distance on a Surface, 
 \emph{J. Combin. Theory Ser. B} 64 (1995), 240-272

\bibitem{ColCritFixedSurfaceChoose}
C. Thomassen, Color-Critical Graphs in a Fixed Surface,
\emph{J. Combin. Theory Ser. A} 70 (1997), 67-100

\bibitem{AllPlanar5ThomPap}
  C. Thomassen, Every Planar Graph is 5-Choosable
  \emph{J. Combin. Theory Ser. B} 62 (1994), 180-181


\end{thebibliography}
\end{document}